\newcommand{\D}{\mathcal{D}}
\newcommand{\A}{\mathcal{A}}
\newcommand{\E}{\mathcal{E}}
\newcommand{\Mar}[1]{{\bf{(Marcelo: }{\color{blue} #1}})}
\newcommand{\n}[1]{\mathds{#1}}
\newcommand{\p}{A^{1/2}}
\newtheorem{theorem}{Theorem}[section]
\newtheorem{lemma}[theorem]{Lemma}
\newtheorem{proposition}[theorem]{Proposition}
\newtheorem{corollary}[theorem]{Corollary}
\newtheorem{assumption}{Assumption}
\theoremstyle{remark}
\newtheorem{remark}{Remark}[section]
\numberwithin{equation}{section}
\numberwithin{theorem}{section}
\newcommand{\cA}{{\mathcal{A}}}
\newcommand{\cF}{{\mathcal{F}}}
\newcommand{\calD}{{\mathcal{D}}}
\newcommand{\calL}{{\mathcal{L}}}
\newcommand{\dvv}[1]{{\mbox{div}(#1)}}
\newcommand{\BR}{\mathbb{R}}
\begin{document}
\title{Boundary feedback  stabilization of a critical  nonlinear JMGT equation  with Neumann-undissipated  part of the boundary
}
	
	\author{
		 Marcelo Bongarti$^1$ \and Irena Lasiecka$^2$
	}
	\date{\small %
	$^1$Weierstrass Institute for Applied Analysis and Stochastics, Berlin, Germany\\%
	$^2$Department of Mathematical Sciences, University of Memphis\\[2ex]%
	\today
}
	\maketitle
	\begin{abstract}
	
	 Boundary feedback stabilization of a critical, nonlinear Jordan--Moore--Gibson--Thompson (JMGT) equation  is considered. JMGT arises in modeling of acoustic waves involved in medical/engineering  treatments like lithotripsy, thermotherapy, sonochemistry, or any other procedures using High Intensity Focused Ultrasound (HIFU).  It is a well-established and recently widely studied model for nonlinear acoustics (NLA): a third--order (in time) semilinear Partial Differential Equation (PDE) with the distinctive feature of predicting the propagation of ultrasound waves at \textit{finite} speed due to heat phenomenon know as \textit{second sound}  which leads to the hyperbolic character of heat propagation.
 In practice, the JMGT dynamics is largely used for modeling the evolution of the acoustic velocity and, most importantly, the acoustic pressure as sound waves propagate through certain media. 
 In this work, \emph{critical} refers to (usual) case where media--damping effects are non--existent or non--measurable and therefore cannot be relied upon for stabilization purposes. 
  
	  In this paper the issue of  boundary stabilizability of originally  unstable  (JMGT) equation is resolved. Motivated by modeling aspects  in HIFU technology, boundary feedback  is supported only on a portion of the boundary, while the remaining part of the boundary is left free (available to  control actions) .  Since the boundary conditions imposed on the  "free" part of the boundary  fail to satisfy  Lopatinski condition (unlike Dirichlet boundary conditions), the analysis of  uniform stabilization from the boundary becomes very subtle and requires careful geometric considerations. 
	 
	\end{abstract}
	\noindent{\bf keywords: } Nonlinear acoustics, second sound, third--order in time, heat-conduction, boundary stabilization, degenerate viscoelasticity.
	

\section{Introduction} \subsection{PDE Model and an Overview}

Let $\Omega \subset \BR^d$ $(d = 2, 3)$ denote a bounded domain with sufficiently smooth boundary $\Gamma := \partial \Omega$ within which a sound wave propagates. In HIFU technology, as well as in other contexts, one is interested in tracking -- and often controlling -- the evolution of the acoustic pressure $u = u(t,x)$ ($t \in \mathbb{R}_+, x \in \Omega)$ caused by such wave  propagation. In media on which heat propagates hyperbolically (which is the case of most biological tissues), the evolution of the acoustic pressure can be assumed to obey the semilinear JMGT--equation which is given by the {\it third order in time abstract evolution}  equation \begin{equation}
    \label{modnl} \tau u_{ttt} + (\alpha - 2ku)u_{tt} - c^2 \Delta u - (\delta + \tau c^2)\Delta u_t = 2ku_t^2,
\end{equation} where $c, \delta, k > 0$ are constants representing the speed and diffusivity of sound and a nonlinearity parameter, respectively, while the function $\alpha: \overline\Omega \to \BR^+$ accounts for natural frictional damping provided by the media. The parameter $\tau >0$ -- also media--dependent -- accounts for thermal relaxation and essentially transfers the hyperbolicity of the heat to the acoustic wave. 

The semilinear equation \eqref{modnl} can be viewed as a singular perturbation and, to some extent, a \textit{refinement}  of the classical quasilinear Westervelt's equation \begin{equation}
    \label{modnl2} (\alpha - 2ku)u_{tt} - c^2 \Delta u - \delta\Delta u_t = 2ku_t^2.
\end{equation} obtained by setting $\tau = 0.$ Physically, the main difference between \eqref{modnl} and \eqref{modnl2} is that the latter predicts that these waves propagate at a  finite  speed.   From the modeling point of view this results from using  Maxwell-Cattaneo Law -- rather than Fourrier's Law -- to model a heat flux  for acoustic heat waves.  The parameter $\tau >0 $ corresponds to time relaxation. For more about the physical interpretation of \eqref{modnl}, its derivation and overall discussion see \cite{bongarti_lasiecka_rodrigues_DCDS_2021,ekoue_halloy_gigon_plantamp_zajdman_ETIJPMS_2013,cattaneo_CR_1958,cattaneo_SBH_2011,spigler_MMAS_2020,christov_jordan_PRL_2005,kaltenbacher}.This also includes an analysis of asymptotic behavior of solutions when the parameter of relaxation tends to zero \cite{kaltenbacher_nikolic_MMMAS_2019,kaltenbacher_nikolic_PAFA_2019,bongarti_charoenphon_lasiecka_JEE_2021,bongarti_charoenphon_lasiecka_SOTA_2020}

The issues of wellposedness and stability of solutions under  homogeneous Dirichlet and Neumann boundary data were first addressed for both nonlinear and linearized ($k = 0$) dynamics around 2010 with the works of I. Lasiecka, R. Triggiani and B. Kaltenbacher \cite{kaltenbacher_lasiecka_PJM_2012,kaltenbacher_lasiecka_marchand_CC_2011,marchand_triggiani_MMA_2012,kaltenbacher_lasiecka_pos}. For the analysis of the long--time dynamics of \eqref{modnl} for both linear and nonlinear cases, the function \begin{equation}\label{gm}\gamma:\overline\Omega \to \BR, \qquad \gamma(x) \equiv \alpha(x) - \frac{\tau c^2}{b}\end{equation} plays a central role. In fact, the existence of a positive constant $\gamma_0$ such that $\gamma(x) \geqslant \gamma_0 >0$ a.e. in $\Omega$ ensures both that linear dynamics is uniformly exponentially stable and that stable nonlinear flows can be constructed via ``barriers'' \cite{kaltenbacher_lasiecka_pos} method. A natural question, in light of the above  results, is concerning  the other profiles of $\gamma.$ It is known that if $\gamma < 0 $ one may have chaotic solutions \cite{conejero_lizama_rodenas_AMIS_2015} and if $\gamma \equiv 0$ then the energy is conserved \cite{kaltenbacher_lasiecka_marchand_CC_2011,kaltenbacher_lasiecka_PJM_2012}. This raises the interesting question of what mechanisms  could be employed to ensure stability of the dynamics when $\gamma $ degenerates, i.e., $\gamma(x)  \geqslant 0 $. 

	From a practical point of view, the quantity $\gamma(x)$ is interpreted as the viscoelasticity of the material point $x \in \Omega$ and, in particular in the medical field, is not expected to be known for all points of $\Omega$. By making the physically relevant assumption that $\gamma \in L^\infty(\Omega)$,  $\gamma(x) \geqslant 0$ a.e. in $\Omega$ (allowing the critical case $\gamma \equiv 0$, or the case where measurements can only me made at isolated points of the domain), we ask ourselves whether a non--invasive (boundary) action can drive the acoustic pressure to zero at large times regardless of the particular knowledge of $\gamma$ (as long as it is nonnegative). This question, besides being of independent interest in stability theory, is critical in ensuring {\it global} wellposedness of nonlinear solutions. Otherwise  the  nonlinearity may cause  "blow" up of solutions \cite{chen_palmieri_EECT_2019}.
	
It has been recently shown that viscoelastic effects produces, in some cases,  the asymptotic decay of the energy, cf. e.g. \cite{lasiecka_wang_JDE_2015,lasiecka_wang_JAMP_2016,delloro_lasiecka_pata_JDE_2016,delloro_lasiecka_pata_JEE_2020,delloro_pata_AMO_2017,delloro_pata__MJM_2017}. In this work we concentrate on  a physically attractive boundary stabilization -- where control action can be applied  on the boundary, hence easily accessible to external manipulations. 
Of particular interest is a configuration arising in the  ultrasound technology where  an acoustic medium is excited  on one part of the boundary, while the remaining part of the boundary is subject to absorbing boundary conditions.This control model was introduced in \cite{clayson,clayson1} in the case of Westervelt-Kuznetsov equation and later pursued in   \cite{bucci_lasiecka_O_2019} for MGT equation.
This corresponds to  the following boundary conditions \begin{equation}
    \label{BC1} 	\lambda \partial_\nu u + \kappa_0(x)u = 0 \ \mbox{on} \  \Sigma_0 \qquad \partial_\nu u + \kappa_1(x)u_t = 0 \ \mbox{on} \ \Sigma_1
\end{equation}
 with  $\Gamma_0, \Gamma_1 \subset \Gamma $ relatively open, $\Gamma_0 \neq \emptyset$, $\overline{\Gamma_0} \cup \overline{\Gamma_1} = \Gamma, \Gamma_0 \cap \Gamma_1 = \emptyset$, $\lambda > 0$, $\kappa_0 \in L^\infty(\Gamma_0)$ and $\kappa_1 \in L^\infty(\Gamma_1),$ $\kappa_1(x) \geq \kappa_1 > 0 , \kappa_0 >0$ a.e.
 
 Notice that the boundary condition \eqref{BC1}$_1$ -- where there is {\it no dissipation --, do not satisfy strong  Lopatinski condition}, a  fact that leads to  new challenges at the level of proving controllability or stabilization even for  a wave equation in dimension higher than one. The technical (mathematical)  reason is that the  presence of tangential boundary derivatives cannot be handled by standard flow  multipliers methods.   In fact, past contributions to the subject include \cite{bongarti_lasiecka_DSOCIP_2021,bongarti_lasiecka_rodrigues_DCDS_2021} where   {\it linear}  dynamics  is considered in the case $\lambda = 0$ and $\kappa_0 \equiv \kappa_1 \equiv 1$ in \eqref{BC1}.  Thus, the uncontrolled part of the boundary is subject to {\it Dirichlet } boundary conditions where Lopatinski condition is satisfied and tangential derivatives (appearing in the applications of flux multipliers) vanish altogether on $\Gamma_0.$ In \cite{bongarti_lasiecka_DSOCIP_2021} star--shaped boundary condition is assumed on $\Gamma_1$. This restriction has been removed in \cite{bongarti_lasiecka_triggiani_AA_2021} by resorting to a  microlocal analysis argument. 
 
	 The present paper  addresses   the challenging case of  a {\it nonlinear, critical}  JMGT dynamics subject to  {\it Neuman/Robin boundary conditions on the undissipated part of the boundary $\Gamma_0$} ($\lambda =1$). The  model is important in the context of HIFU control theory where boundary  {\it open loop} strategic control is activated precisely on  this ``free'' part of the boundary. On the other hand, in such case one encounters  a well recognized PDE predicament: seeking stabilization  for a  hyperbolic dynamics when Lopatinski condition \cite{sakamoto_CUP_1982,sakamoto_CUP_2009} fails on  undissipated  portion of the boundary.  This leads to major difficulties when applying flux multipliers or geometric optics in order to carry  the analysis of uniform  stability. 
	    Clearly, one expects  some  restrictions on the geometry of the boundary to cooperate. In the case of Dirichlet boundary conditions, star--shaped condition suffices. 
	    Instead, for the Neumann case (non--Lopatinski),  it turns out that star--shaped along with  some {\it convexity} is a sufficient condition. Precise formulation of the corresponding results will be given in the next section. In addition to new geometric constructs,   nonlinearity in the model forces  considering stability properties  at higher topological levels with a restricted ``smallness'' condition   imposed on  the initial data. The key point  here is that this would make the model ``close'' to linear. 
	    In order to contend with this  limitation,  the results presented require smallness conditions imposed only at the low energy level, while higher derivatives can remain large. We will be able to achieve this  goal through  boundary dissipation and small initial data  imposed {\it only}  at the lowest energy level. So the model and the resulting acoustic waves  remain genuinely nonlinear. 
	    
	    For other  relatively  recent references related to  regularity questions  for linear MGT equation, an  interested reader  may be referred to : \cite{bucci_eller_CRM_2021,pellicer_solamorales_EECT_2019,triggiani_SOTA_2020} 

	\section{Main Results and Discussion.} \label{secwpp}
	We consider the system comprised of \eqref{modnl},  boundary conditions \eqref{BC1} and initial conditions \begin{equation}
	    \label{IC1} u(0,\cdot) = u_0, \qquad u_t(0,\cdot) = u_1, \qquad u_{tt}(0,\cdot) = u_2
	\end{equation} with regularity to be specified in what follows. 
	
	 Here and throughout the paper, by $L^2(\Omega)$ and $L^2(\Gamma)$ we denote the sets of measurable (in the Lebesgue and Hausdorff senses, respectively) functions whose squares are integrable on $\Omega$ and $\Gamma$ respectively equipped with the norms induced by the inner products $$(u,v)=\int_\Omega uv d\Omega \ \ \mbox{and} \ \  (u,v)_\Gamma=\int_\Gamma uv d\Gamma.$$ and  denoted respectively by $\|\cdot\|_2$ and $\|\cdot\|_\Gamma$. The remaining $L^p(\Omega)$--spaces ($1 \leqslant p \leqslant \infty$) will also have their norms denoted by $\|\cdot\|_p$. Additionally,  by $H^s(\Omega)$ we denote the ($L^2$--based) Sobolev space of order $s$ and define the particular spaces $H_\Gamma^1(\Omega)$ and $H_\Gamma^2(\Omega)$ as $$H_\Gamma^1(\Omega) = \left\{u \in H^1(\Omega) ; \ u|_\Gamma = 0\right\} \ \mbox{and} \ H_\Gamma^2(\Omega) = H^2(\Omega) \cap H_\Gamma^1(\Omega)$$ in order to avoid confusion with the standard $H_0^1(\Omega)$.
	
	\subsection{Functional Analytic Setting.} 
Let $A: \D(A) \subset L^2(\Omega) \to L^2(\Omega)$ be the operator defined as \begin{equation}\label{oplap} A\xi = -\Delta \xi,  \ \ 
\D(A) = \left\{\xi \in H^2(\Omega); \  \partial_\nu\xi\rvert_{\Gamma_1} =0, \left[\partial_\nu\xi+\kappa_0 \xi\right]_{\Gamma_0}]= 0 \right\}. 
\end{equation}In this setting, $A$ is a positive,  self--adjoint operator with compact resolvent and for $\kappa_0 > 0  $,  $\D\left(\p\right) = H^1(\Omega)$ with the -- equivalent to $H^1(\Omega)$ -- topology of $\calD(A^{1/2})$ given by $$\|u\|_{\calD(\p)}^2 := \|\nabla u\|_2^2 + \int_{\Gamma_0}\kappa_0|u|^2d\Gamma_0.$$ In addition, with some abuse of notation we (also) denote by $A: L^2(\Omega) \to [\D(A)]'$ the extension (by duality) of the operator $A.$

Let us  introduce the phase space $\mathbb{H}$ given by  \begin{equation}\label{ph-sp}\mathbb{H} := \D(A^{1/2} ) \times 
	\D(A^{1/2})  \times L^2(\Omega) \sim H^1(\Omega) \times H^1(\Omega) \times L^2(\Omega).\end{equation}
Next, we rewrite \eqref{modnl} along with \eqref{BC1} and \eqref{IC1} as a first--order abstract system on $\mathbb{H}$. For this, we introduce the classic boundary $\to$ interior harmonic extension for the Neumann data on $ \Gamma_1$ as follows: for $\varphi \in L^2(\Gamma_1)$, let $\psi := N(\varphi),$ be the unique solution of the elliptic problem \begin{equation}
\begin{cases}
\Delta\psi = 0 \ & \mbox{in} \ \Omega \\ \partial_\nu \psi = \varphi\rvert_{\Gamma_1} \ & \mbox{on} \ \Gamma_1 \\  \partial_\nu\psi +\kappa_0 \psi  = 0 \ & \mbox{on} \ \Gamma_0.
\end{cases} \label{ep}
\end{equation} From elliptic theory, it follows that that $N \in \mathcal{L}(H^s(\Gamma_1),H^{s+3/2}(\Omega))$ $(s \in \mathbb{R})$ and \begin{equation} \label{neq} N^\ast A \xi = \begin{cases}
\xi \ \mbox{on} \ &\Gamma_1 \\ 0 \ \mbox{on} \ &\Gamma_0,
\end{cases}\end{equation} for all $\xi \in \D(A)$, where $N^\ast$ represents the adjoint of $N$
when it is considered as an operator from $L^2(\Gamma_1) $ to $L^2(\Omega)$ \cite{lasiecka_triggiani_CUP_2010}. 

Thus, the $u$--problem can be written (via duality on  $[\D(A)]'$)  as \begin{align}\label{absver}
\tau u_{ttt} + \alpha u_{tt} + c^2Au + bAu_t + c^2 AN(\kappa_1N^*Au_t) + bAN(\kappa_1N^*Au_{tt}) = u_t^2+uu_{tt}
\end{align} where we have taken $k = 1/2$ without any loss of generality.

Next, we introduce the operator $\cA: \calD(\cA) \subset \mathbb{H}  \to \mathbb{H}$ with  the action:
\begin{align}\label{opuA}
\cA\begin{bmatrix}\xi_1 \\[2mm] \xi_2 \\[2mm] \xi_3\end{bmatrix} := \begingroup 
\setlength\arraycolsep{12pt} \begin{bmatrix} 0 & I & 0 \\[2mm] 0 & 0 & I \\[2mm] -\dfrac{c^2}{\tau}A & -\dfrac{c^2}{\tau}AN(\kappa_1 N^\ast A) - \dfrac{b}{\tau}A & -\dfrac{b}{\tau}AN(\kappa_1 N^\ast A) - \dfrac{\alpha}{\tau} I\end{bmatrix}\endgroup \begin{bmatrix} \xi_1 \\[2mm] \xi_2 \\[2mm] \xi_3 \end{bmatrix}\end{align} and domain  (with $\vec{\xi}\equiv (\xi_1,\xi_2,\xi_3)^\top$ \begin{align}
\label{domA} \D(\cA) &= \left\{ \vec{\xi} \in \mathbb{H};    \ \xi_3 \in \D\left(A^{1/2}\right), \ \xi_i + N(\kappa_1N^*A\xi_{i+1})\in D(A),~\mbox{for}~i=1,2\right\} \nonumber \\ &= \left\{\vec{\xi}  \in \left[H^2(\Omega)\right]^2 \times  H^1(\Omega) ; \ \left[ \partial_\nu \xi_1 + \kappa_0\xi_1\right] _{\Gamma_0}= 
	\left[\partial_\nu \xi_2 + \kappa_0\xi_2\right]_{\Gamma_0} =0 \right. \nonumber \\ & \hspace{5.9cm}\left. \left[\partial_\nu \xi_1+ \kappa_1\xi_2\right] _{\Gamma_1}= 
	\left[\partial_\nu \xi_2 + \kappa_1\xi_3\right]_{\Gamma_1} =0
\right\}
\end{align} 
where the second characterization follows from elliptic regularity. This gives $$\D(\cA) \subset H^2(\Omega) \times H^2(\Omega) \times H^1(\Omega) $$ with  a proper, but not dense injection. 


The first order abstract version of the $u$--problem is thus given by 
\begin{equation} \begin{cases}
\label{usist} \Phi_t = \cA\Phi + \cF(\Phi)\\ 
\Phi(0) = \Phi_0 = (u_0,u_1,u_2)^\top,\end{cases}
\end{equation} in the variable $\Phi = (u,u_t,u_{tt})^\top$ with $\cA$
defined in (\ref{opuA}) and $\cF(\Phi)^\top \equiv (0,0,\tau^{-1}(u_t^2+uu_{tt})).$ 

 In order to treat nonlinear problem one needs     to consider ``smoother'' solutions than generated by the topology  of $\mathbb{H}$. 
 This leads ton the following construction of the second phase space  denoted by $\mathbb{H}_1 $, which is ``thighter'' than $\mathbb{H}$ but strictly larger than  $\D(\cA) $.
    The new phase space $\mathbb{H}_1$ is 
    defined below
     \begin{equation}\label{newphase}\mathbb{H}_1 = \{ \vec{\xi} \in \mathbb{H}; \Delta \xi_1 \in L^2(\Omega);   \left[\lambda \partial_\nu \xi_1 + \kappa_0\xi_1\right] _{\Gamma_0}= 0;  \left[\partial_\nu \xi_1+ \kappa_1\xi_2\right] _{\Gamma_1}= 0
 \} 
\end{equation} 
and endowed with the norm 
$$\|\vec{\xi} \|^2_{\mathbb{H}_1} = \|\vec{\xi} \|^2_{\mathbb{H}} + \|\Delta \xi_1\|_2^2 +  \|\xi_1\|^2_{H^{1/2}(\Gamma_0)}+ \|\xi_2\|^2_{H^{1/2}(\Gamma_1)}$$ 
or equivalently 
$$\|\vec{\xi} \|^2_{\mathbb{H}_1} = \|\vec{\xi} \|^2_{\mathbb{H}} + \|\Delta \xi_1\|_2^2 +  \|\partial_{\nu} \xi_1\|^2_{H^{1/2}(\Gamma)}$$ 
Note that the  boundary conditions in the definition of  the space $\mathbb{H}_1$ are  well defined due to the property: 
$\Delta \xi_1 \in L^2(\Omega) $ and $\xi_1 \in H^1(\Omega) $ then $ \partial_{\nu} \xi_1 \in H^{-1/2}(\Gamma) $ -- the latter allowing to define the boundary conditions as a distribution. 
We  also note that  since $\xi_{1},\xi_2 \in H^1(\Omega)$ we have $\xi_{i}|_{\Gamma}  \in H^{1/2} (\Gamma)$ ($i = 1,2$) and therefore $ \partial_{\nu} \xi_1\in H^{1/2}(\Gamma)$. This along with  the elliptic regularity implies: 
$$\mathbb{H}_1 \subset H^2(\Omega) \times H^1(\Omega) \times L^2(\Omega) $$  with a proper but  {\it not dense} injection. We shall show that the operator $\cA$ also generates a $C_0$--semigroup $\{T(t)\}_{t \geqslant 0}$ on $\mathbb{H}_1.$ Notice that  the nonlinear term is invariant under $\mathbb{H}_1 $ topology in dimensions up to 3.
\subsection{Main Results} 
Our first preliminary result is stated below. \begin{theorem}\label{gen}The operator $\cA$ generates a $C_0$--semigroup $\{S(t)\}_{t \geqslant 0}$ on $\mathbb{H}$. Moreover, the family $T(t):= S(t) \rvert_{\mathbb{H}_1}$, $t \geqslant 0,$ is also a $C_0$--semigroup with generator $\cA$ and  its realization  on  $\mathbb{H}_1$.
\end{theorem}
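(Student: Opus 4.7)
The plan is to apply the Lumer--Phillips theorem to $\cA - \omega I$ on $\mathbb{H}$ for $\omega \geq 0$ sufficiently large, after equipping $\mathbb{H}$ with the equivalent inner product
\[
(\vec\xi,\vec\eta)_{\mathbb{H}} := c^2(\p \xi_1,\p \eta_1) + b(\p \xi_2,\p \eta_2) + \tau(\xi_3,\eta_3),
\]
which internalizes the natural energy of the linearized JMGT equation and folds in the Robin contribution on $\Gamma_0$ through the norm on $\D(\p)$. For the second assertion, once generation of $S(t)$ on $\mathbb{H}$ is in hand, I would prove invariance $S(t)\mathbb{H}_1 \subset \mathbb{H}_1$ together with a uniform bound $\|S(t)\|_{\mathcal{L}(\mathbb{H}_1)} \leq M e^{\omega t}$, from which the semigroup property on $\mathbb{H}_1$ and strong continuity follow by standard arguments and the identification of the generator as the realization of $\cA$ on $\mathbb{H}_1$.

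\textbf{Verification of Lumer--Phillips on $\mathbb{H}$.} For $\vec\xi \in \D(\cA)$ I would compute $(\cA\vec\xi,\vec\xi)_{\mathbb{H}}$ by integration by parts. The key tool is the identity $N^\ast A \phi = \phi|_{\Gamma_1}$ (and $=0$ on $\Gamma_0$) for $\phi \in \D(A)$, which converts the operator terms $AN(\kappa_1 N^\ast A \cdot)$ into boundary integrals supported on $\Gamma_1$. Interior cross pairings cancel telescopically, the $\Gamma_1$ boundary integrals combine into the non--positive contribution $-\int_{\Gamma_1}\kappa_1|\xi_3|^2\,d\Gamma$ coming from the genuine dissipation, and the $\Gamma_0$ boundary integrals cancel against the $\kappa_0$ weight already built into $\|\cdot\|_{\D(\p)}$. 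What remains is controlled by $\omega\|\vec\xi\|_{\mathbb{H}}^2$, producing dissipativity of $\cA - \omega I$. For the range condition $(\lambda I - \cA)\vec\xi = \vec f$ with $\lambda > \omega$, the first two scalar equations express $\xi_2$ and $\xi_3$ as affine functions of $\xi_1$ and $\vec f$; substituting into the third collapses the system to a coercive elliptic problem for $\xi_1 \in \D(\p)$ with mixed Robin/Neumann data, whose solvability follows from Lax--Milgram (the leading behavior $\lambda^3 \xi_1 + (b\lambda/\tau) A \xi_1$ providing coercivity for large $\lambda$).

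\textbf{Restriction to $\mathbb{H}_1$ and main obstacle.} For $\Phi_0 \in \D(\cA)$, the trajectory $\Phi(t) = S(t)\Phi_0$ is classical, and I would derive the a priori bound $\|\Phi(t)\|_{\mathbb{H}_1} \leq Ce^{Ct}\|\Phi_0\|_{\mathbb{H}_1}$ by applying a multiplier adapted to the additional terms $\|\Delta \xi_1\|_2^2 + \|\partial_\nu \xi_1\|^2_{H^{1/2}(\Gamma)}$ of the $\mathbb{H}_1$-norm (for instance, multiplying the linear JMGT equation by $-\Delta u_t$ and closing via Gr\"onwall). Crucially, the trace identity $\partial_\nu u + \kappa_1 u_t = 0$ on $\Gamma_1$ is preserved along the flow by the very definition of $\D(\cA)$, so the dissipation surface integrals are correctly matched. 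Density of $\D(\cA)$ in $\mathbb{H}_1$ (via truncation and mollification that respect the boundary conditions of $\mathbb{H}_1$) then extends the bound to arbitrary $\mathbb{H}_1$ initial data, giving invariance. I expect the main obstacle to be the failure of the strong Lopatinski condition on $\Gamma_0$: the higher--order multiplier generates tangential trace contributions on $\Gamma_0$ that are not directly controlled by the $\mathbb{H}_1$ norm, so the multiplier has to be tuned to the Robin weight $\kappa_0$ so that these uncontrolled traces either cancel or are absorbed by the $\D(\p)$ piece of the energy.
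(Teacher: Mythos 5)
There is a genuine gap in the first step. With the diagonal inner product $c^2(\p\xi_1,\p\eta_1)+b(\p\xi_2,\p\eta_2)+\tau(\xi_3,\eta_3)$ the form $(\cA\vec{\xi},\vec{\xi})_{\mathbb{H}}$ contains, from the first row of $\cA$, the term $c^2(\p\xi_2,\p\xi_1)$ and, from the entry $-\frac{c^2}{\tau}A$ in the third row, the term $-c^2(\p\xi_1,\p\xi_3)$. These involve different components and do not cancel ``telescopically''; only the pair $b(\p\xi_3,\p\xi_2)-b(A\xi_2,\xi_3)$ cancels. Since the $\mathbb{H}$--norm controls $\xi_3$ only in $L^2(\Omega)$, the leftover $-c^2(\p\xi_1,\p\xi_3)$ is \emph{not} bounded by $\omega\|\vec{\xi}\|_{\mathbb{H}}^2$ for any $\omega$, so quasi--dissipativity of $\cA-\omega I$ fails for your choice of inner product. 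This is exactly why the paper first passes to $bz=bu_t+c^2u$: in the $(u,z,z_t)$ coordinates the operator splits as $\mathbb{A}=\mathbb{A}_d+P$ with $P$ bounded on $\mathbb{H}$ and $\mathbb{A}_d$ genuinely dissipative (the offending coupling is absorbed into the definition of $z$, equivalently into a non--diagonal equivalent inner product on $\mathbb{H}$). Your range argument would also have to be redone in those variables; the paper reduces it to inverting a single strictly positive operator $K_s$ on $\D(\p)$ rather than a Lax--Milgram problem for $\xi_1$.

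For the restriction to $\mathbb{H}_1$ you propose a multiplier $-\Delta u_t$ plus Gr\"onwall and a density argument, and you yourself flag the uncontrolled tangential traces on $\Gamma_0$ as the obstacle without resolving it. The paper needs none of this at the generation stage: from $bu_t+c^2u=bz$ one writes $u(t)=e^{-\frac{c^2}{b}t}u_0+\int_0^t e^{-\frac{c^2}{b}(t-\sigma)}z(\sigma)\,d\sigma$, then substitutes the $z$--equation to express $\int_0^t e^{-\frac{c^2}{b}(t-\sigma)}\Delta z\,d\sigma$ through $z_t$, $\gamma u_t$ and their initial values --- all already controlled in $\mathbb{H}$ --- which yields $\Delta u(t)\in L^2(\Omega)$; the same exponential formula shows the boundary relations $\left[\partial_\nu u+\kappa_1 u_t\right]_{\Gamma_1}=0$ and $\left[\partial_\nu u+\kappa_0 u\right]_{\Gamma_0}=0$ are propagated. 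The Lopatinski difficulty you anticipate simply does not arise in Theorem \ref{gen}; it only enters in the stability analysis. You should therefore replace both the dissipativity computation and the $\mathbb{H}_1$--invariance argument along these lines.
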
The second result deals with  an exponential stability  of the  semigroups on the phase space $\mathbb{H} $ and  $\mathbb{H}_1$. For this, one needs to introduce the following geometric condition. 
\begin{assumption}\label{geo}
The boundary $\Gamma_0$ is star--shaped and convex. This is to say: there exists $x_0 \in \BR^n $ such that $(x-x_0) \cdot \nu(x) \leqslant 0$  for all $x \in \Gamma_0$  where   $\n(x) $ is the outwards normal vector to the boundary at $x$. In addition, there exists a convex level set function which defines  $\Gamma_0.$ See \emph{\cite{lasiecka_triggiani_zhang_CM_2000}.}
\end{assumption}
\begin{theorem}[\bf Two level uniform stability] \label{thm34n}
 Let Assumption \ref{geo} on $\Gamma_0$ be in force and let $\gamma(x) \geqslant 0$. Then {\bf \emph{(i)}}
the semigroup $\{S(t)\}_{t \geqslant 0}$ generated by $\cA$ in $\mathbb{H}$ is uniformly exponentially stable with decay rate $\omega_0 > 0$ and {\bf \emph{(ii)}}
 	the semigroup  $\{T(t)\}_{t \geqslant 0}$ generated by $\cA$ in $\mathbb{H}_1$ is uniformly exponentially stable with decay rate $\omega_1>0$, where $\omega_1 < \omega_0$.

\end{theorem}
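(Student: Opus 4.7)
The plan is to prove part (i) on $\mathbb{H}$ first via a multiplier/observability route, and then obtain part (ii) on $\mathbb{H}_1$ by applying the same machinery at the higher topology level, exploiting the fact that $\mathbb{H}_1$ is (morally) $\cA$-invariant modulo boundary corrections. Throughout, it is natural to work with the transformed variable $z := u_t + \tfrac{c^2}{b}u$ (where $b=\delta+\tau c^2$), which recasts the third-order JMGT dynamics as an abstract wave equation for $z$ with memory-like coupling to $u$, the strength of the coupling being governed by $\gamma(x)=\alpha(x)-\tau c^2/b\geqslant 0$. The natural energy functional will be
\begin{equation*}
E(t) = \tfrac{1}{2}\bigl[\,\|\tau u_{tt}+\gamma u_t\|_2^2 + b\,\|\nabla u_t\|_2^2 + c^2\,\|\nabla u\|_2^2 + b\textstyle\int_{\Gamma_0}\kappa_0 |u_t|^2 + c^2\int_{\Gamma_0}\kappa_0 |u|^2\bigr],
\end{equation*}
and the first step is to compute $\frac{d}{dt}E(t)$ and verify that the only dissipative contribution comes from $\Gamma_1$, producing boundary terms of the form $-\int_{\Gamma_1}\kappa_1(\,|u_{tt}|^2+|u_t|^2\,)$, with $\gamma$-terms contributing a nonnegative interior integral of $\|\sqrt{\gamma}u_{tt}\|_2^2$ (vanishing in the critical case).

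The heart of the proof is an observability-type inequality $\int_0^T E(t)\,dt \leqslant C\bigl[E(0)+\text{boundary dissipation on }\Sigma_1\bigr]$. For this I would apply the radial flux multiplier $h\cdot\nabla u_t$ with $h(x)=x-x_0$ from Assumption \ref{geo}, together with the lower-order multipliers $u_t$ and $u_{tt}$, to both the equation for $u$ and (implicitly, through a commutation) the equation for $z$. The boundary terms split into contributions on $\Sigma_0$ and $\Sigma_1$. On $\Sigma_1$ the normal derivative is controlled by $\kappa_1 u_t$ via \eqref{BC1}, so those terms are absorbed into the dissipation. The principal obstacle lies on $\Sigma_0$: since the Robin condition $\partial_\nu u+\kappa_0 u=0$ does not satisfy Lopatinski, the flux multiplier produces an uncontrolled tangential gradient term $\int_{\Sigma_0}(h\cdot\nu)|\nabla_{\tan}u_t|^2$. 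Here the star-shaped hypothesis $(x-x_0)\cdot\nu\leqslant 0$ on $\Gamma_0$ makes this term have the favorable sign for $|\nabla_{\tan}u_t|^2$ but the wrong sign for the remaining trace terms; to dominate those I will invoke the convex level-set function from \cite{lasiecka_triggiani_zhang_CM_2000} and replace $h$ locally near $\Gamma_0$ by the gradient of that function, whose Hessian being nonnegative supplies the missing positive quadratic form on $\nabla u_t$ at the boundary. This is the step I expect to be the main technical obstacle, and it is precisely where star-shapedness \emph{and} convexity of $\Gamma_0$ are both needed.

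Once the observability inequality is in hand, combined with the energy identity it yields $E(T)\leqslant \rho\, E(0)$ with $\rho<1$ for $T$ sufficiently large, and standard semigroup arguments then give $\|S(t)\|_{\mathcal{L}(\mathbb{H})}\leqslant M e^{-\omega_0 t}$, establishing (i). Lower-order terms produced by the multipliers (e.g. $\|u_t\|_2^2$, $\|u\|_2^2$) are absorbed by the customary compactness-uniqueness argument, using that $\cA$ has compact resolvent and no eigenvalue on the imaginary axis, the latter following from a unique continuation argument for the over-determined elliptic problem obtained when the dissipation on $\Gamma_1$ vanishes.

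For part (ii), the strategy is to differentiate the system once in time and repeat the analysis on $(u_t,u_{tt},u_{ttt})$, noting that for data in $\mathbb{H}_1$ the trace compatibilities in the definition of $\mathbb{H}_1$ ensure that $\Delta u\in L^2(\Omega)$ is propagated and that $\partial_\nu u_t$ has the $H^{1/2}(\Gamma)$ regularity needed to make the boundary terms meaningful. The energy at this level is $E(t)+\|\Delta u(t)\|_2^2+\|\partial_\nu u(t)\|_{H^{1/2}(\Gamma)}^2$, and the same multiplier identity, now applied to the differentiated equation, produces the same observability with the same geometric considerations on $\Gamma_0$. The rate $\omega_1<\omega_0$ arises because closing the higher-level estimate requires absorbing into the lower-level decay the elliptic contribution $\|\Delta u\|_2^2$ that cannot be recovered by the multipliers alone; using Theorem \ref{gen} together with elliptic regularity on the stationary part $c^2 Au + bAu_t + (\text{Neumann corrections}) = -\tau u_{ttt}-\alpha u_{tt}+\text{l.o.t.}$ and the already-established decay on $\mathbb{H}$, we obtain exponential decay of $\|\Delta u(t)\|_2^2$ at a (possibly slower) rate $\omega_1\in(0,\omega_0)$, finishing the proof.
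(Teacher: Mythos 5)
Your architecture for part \textbf{(i)} --- the change of variables $z=u_t+\tfrac{c^2}{b}u$, an energy identity with dissipation only through $\Gamma_1$ and $\|\sqrt{\gamma}u_{tt}\|_2^2$, flux multipliers, and a compactness--uniqueness absorption of lower--order terms --- matches the paper. But the step you yourself flag as ``the main technical obstacle'' is where the proposal goes wrong, in two ways. First, the sign analysis on $\Sigma_0$ is backwards: with a star--shaped field satisfying $h\cdot\nu\leqslant 0$ on $\Gamma_0$, the velocity trace $\int_{\Sigma_0}(h\cdot\nu)|z_t|^2\leqslant 0$ is the harmless term, while the gradient contribution $-\tfrac{b}{2}\int_{\Sigma_0}(h\cdot\nu)\,|\nabla_{\tan}z|^2\geqslant 0$ is the \emph{uncontrollable} one (it sits on the side of the inequality that must be bounded above, and no estimate for $\partial_\tau z$ on $\Gamma_0$ is available because neither $z_t|_{\Gamma_0}$ nor a microlocal trace estimate is at one's disposal there --- see Remark \ref{rmkgeo}). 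The paper's resolution, via the construction of \cite{lasiecka_triggiani_zhang_CM_2000} under Assumption \ref{geo}, is a $C^2$ vector field with $h\cdot\nu=0$ \emph{identically} on $\Gamma_0$ and uniformly positive Jacobian in $\Omega$ (see \eqref{assgeo}); the tangential term is annihilated, not dominated by a boundary Hessian form as you propose. The surviving cross term $\int_{\Sigma_0}\partial_\nu z\,\partial_\tau z\,(h\cdot\tau)$ is then handled through the Robin condition (which makes $\partial_\nu z=-\kappa_0 z/\lambda$ a zeroth--order trace) and an $H^{\delta}\times H^{-\delta}$ duality, producing only lower--order terms. Second, you do not treat the tangential derivative on $\Sigma_1$: there $h\cdot\nu$ has no sign, and $\int_{\Sigma_1}|\partial_\tau z|^2$ is \emph{not} controlled by the dissipation $\int_{\Sigma_1}\kappa_1|z_t|^2$ plus the normal trace alone; the paper must invoke the microlocal tangential--trace estimate of \cite{lasiecka_lebiedzik_NA_2002} (inequality \eqref{tang1}) to close Lemma \ref{bter}. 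Without these two ingredients the observability inequality does not close.

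For part \textbf{(ii)} your route --- differentiate in time and run the same multipliers on $(u_t,u_{tt},u_{ttt})$ --- targets the wrong topology. The space $\mathbb{H}_1$ in \eqref{newphase} adds to $\mathbb{H}$ only $\Delta u\in L^2(\Omega)$ plus boundary compatibilities; it gives no extra regularity on $u_t$ or $u_{tt}$ and in particular does not place $(u_t,u_{tt},u_{ttt})$ in $\mathbb{H}$ (that would require $\Phi_0\in\D(\cA)$, a strictly smaller set by \eqref{domA}). Likewise, reading $\|\Delta u\|_2$ off the ``stationary part'' $c^2Au+bAu_t=-\tau u_{ttt}-\alpha u_{tt}+\dots$ fails for the same reason: the right side contains $u_{ttt}$, which is not in $L^2(\Omega)$ for $\mathbb{H}_1$ data. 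The paper instead tests the $z$--wave equation with $\Delta u$ (identity \eqref{deltaz}), combines it with $\Delta(bu_t+c^2u)=b\Delta z$ tested against $\Delta u$ to produce $b\|\Delta u(T)\|_2^2+c^2\int_0^T\|\Delta u\|_2^2\lesssim \E(0)+\int_0^TE_1$, recovers the remaining $H^2$--trace information from the invariance of the boundary conditions, and concludes $\int_0^\infty\E(\sigma)\,d\sigma<\infty$, whence exponential decay on $\mathbb{H}_1$ by the Datko--Pazy theorem. That chain uses only quantities controlled by the $\mathbb{H}_1$ norm and the already--proved decay of $E_1$, which is why it closes where the time--differentiated argument cannot.
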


Once linear wellposedness and uniform stability  of the linear ($k=0$) problem are established with respect to  the appropriate topologies, 
 our next task  is to prove generation of nonlinear semigroup  on $\mathbb{H}_1 $. To accomplish this, initial data need to be assumed sufficiently small. \textit{How small?} This is  an important question as argued
  in \cite{bongarti_charoenphon_lasiecka_JEE_2021}. We will be able to show that  some smallness will be only   imposed at the lowest level of regularity, while higher derivatives can be large. 
  As a consequence,  in the following theorem we show existence of $\mathbb{H}_1$--valued solutions given $\mathbb{H}_1$ initial data which are small in $\mathbb{H}$ only. The proof, given in Section \ref{nonlinearsem}, relies on estimates derived via interpolation inequalities which allows to demonstrate certain  ``invariance''  of a  $\mathbb{H}$-small ball under the  nonlinear dynamics in $\mathbb{H}_1$. 

We start specifying the notion of solution for the semilinear problem \eqref{modnl} supplemented with \eqref{BC1} and \eqref{IC1}. We denote the initial data here by $\Phi_0 = (u_0,u_1,u_2)^\top.$ Given $T>0$, we say that $$\Phi(t) = (u(t),u_t(t),u_{tt}(t))$$ is a {\bf mild solution} for the system  \eqref{modnl}, \eqref{BC1} and \eqref{IC1} provided $\Phi  \in C([0,T],\mathbb{H}_1)$ and \begin{equation}
\label{formnon}\Phi(t) = T(t)\Phi_0 + \int_0^t T(t-\tau)\cF(\Phi)(\tau)d\tau,
\end{equation} 

Before stating the theorem, we denote by $\mathbb{H}^\rho$ (for $\rho > 0$) the set $$\mathbb{H}^\rho := \left\{\Phi \in \mathbb{H}_1; \|\Phi\|_{\mathbb{H}} <\rho \right\}.$$\begin{theorem}[\bf Global Solutions]\label{nsem} Let Assumption \ref{geo} on $\Gamma_0$ be in force. Then, there exists $\rho > 0$ sufficiently small such that, given any $\Phi_0 \in \mathbb{H}^\rho$ the formula \eqref{formnon} defines a continuous $\mathbb{H}_1$--valued mild solution for the system \eqref{modnl}, \eqref{BC1} and \eqref{IC1}. Moreover, for such $\rho > 0$, there exists $R = R(\|\Phi_0\|_{\mathbb{H}_1})$ such that all trajectories starting in $B_{\mathbb{H}^\rho}(0,R)$\footnote{The $\mathbb{H}^\rho$--ball centered at the origin and with radius $R.$} remain in $B_{\mathbb{H}^\rho}(0,R_1)$ for all $t \geqslant 0, R_1 > R. $\end{theorem}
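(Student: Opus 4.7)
The plan is to combine a standard semigroup-based local existence argument with a simultaneous two-level barrier (bootstrap) that exploits the two exponential decay rates $\omega_0>\omega_1$ furnished by Theorem~\ref{thm34n}. First I would produce a local mild solution on some interval $[0,T_0]$ by Banach fixed point applied to the Duhamel map $\Phi\mapsto T(t)\Phi_0+\int_0^t T(t-s)\cF(\Phi(s))\,ds$ in $C([0,T_0];\mathbb{H}_1)$: since $\mathbb{H}_1\hookrightarrow H^2(\Omega)\times H^1(\Omega)\times L^2(\Omega)$ and $H^2\hookrightarrow L^\infty$ in $d\leq 3$, the nonlinearity $\cF$ is locally Lipschitz from $\mathbb{H}_1$ to itself, and Theorem~\ref{gen} provides the $C_0$-semigroup $T(t)$ on $\mathbb{H}_1$. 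This yields a unique solution on a maximal interval $[0,T_{\max})$, with the standard blow-up alternative forcing $\lim_{t\uparrow T_{\max}}\|\Phi(t)\|_{\mathbb{H}_1}=\infty$ if $T_{\max}<\infty$.

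The central nonlinear estimate is the following. Since $\cF(\Phi)=(0,0,\tau^{-1}(u_t^2+uu_{tt}))$ has vanishing first two components, $\|\cF(\Phi)\|_{\mathbb{H}_1}=\|\cF(\Phi)\|_{\mathbb{H}}=C_\tau\|u_t^2+uu_{tt}\|_2$, and I would establish
\begin{equation*}
\|\cF(\Phi)\|_{\mathbb{H}_1}\leq C\,\|\Phi\|_{\mathbb{H}}\bigl(\|\Phi\|_{\mathbb{H}}+\|\Phi\|_{\mathbb{H}_1}\bigr)
\end{equation*}
by (i) $\|u_t^2\|_2\leq C\|u_t\|_{H^1}^2\leq C\|\Phi\|_{\mathbb{H}}^2$ via $H^1\hookrightarrow L^4$ (valid in $d\leq 3$) and (ii) $\|uu_{tt}\|_2\leq\|u\|_\infty\|u_{tt}\|_2\leq C\|u\|_{H^2}\|u_{tt}\|_2\leq C\|\Phi\|_{\mathbb{H}_1}\|\Phi\|_{\mathbb{H}}$ via $H^2\hookrightarrow L^\infty$. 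The decisive structural feature is that every quadratic term carries at least one factor of the smaller norm $\|\Phi\|_{\mathbb{H}}$, so the nonlinearity is small whenever the $\mathbb{H}$-norm is small---even when the $\mathbb{H}_1$-norm is large.

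Writing $E(t)=\|\Phi(t)\|_{\mathbb{H}}$ and $\mathcal{E}(t)=\|\Phi(t)\|_{\mathbb{H}_1}$, the exponential stability bounds $\|T(t)\|_{\mathcal{L}(\mathbb{H})}\leq M_0 e^{-\omega_0 t}$ and $\|T(t)\|_{\mathcal{L}(\mathbb{H}_1)}\leq M_1 e^{-\omega_1 t}$ from Theorem~\ref{thm34n} applied to the Duhamel formula yield
\begin{align*}
E(t)&\leq M_0 e^{-\omega_0 t}E(0)+M_0 C\int_0^t e^{-\omega_0(t-s)}E(s)\bigl(E(s)+\mathcal{E}(s)\bigr)\,ds,\\
\mathcal{E}(t)&\leq M_1 e^{-\omega_1 t}\mathcal{E}(0)+M_1 C\int_0^t e^{-\omega_1(t-s)}E(s)\bigl(E(s)+\mathcal{E}(s)\bigr)\,ds.
\end{align*}
I would then run a simultaneous bootstrap on the maximal interval where $E(s)\leq 2M_0\rho$ and $\mathcal{E}(s)\leq 2M_1\mathcal{E}(0)$: the second inequality, treated by integral Gronwall, is strictly improved provided $2M_0 M_1 C\rho<\omega_1$; substituting the resulting bound $\mathcal{E}(s)\leq R_1:=2M_1\mathcal{E}(0)$ into the first and requiring $M_0 C(R_1+2M_0\rho)<\omega_0$ strictly improves the $E$-ansatz. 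Both refinements hold once $\rho$ is fixed small and $\mathcal{E}(0)$ is below a threshold $R\sim\omega_0/(2M_0 M_1 C)$---this is the quantitative radius $R=R(\|\Phi_0\|_{\mathbb{H}_1})$ of the statement. The blow-up alternative then forces $T_{\max}=\infty$ and invariance of $B_{\mathbb{H}^\rho}(0,R_1)$ follows.

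The main obstacle is the coupling between the two levels: the $\mathbb{H}_1$-barrier only closes once $E$ is uniformly small, while the $E$-barrier only closes once $\mathcal{E}$ is uniformly bounded. The strict separation $\omega_0>\omega_1$ provided by Theorem~\ref{thm34n}, combined with the ``$\mathbb{H}$-smallness-dominated'' quadratic structure of $\cF$, is precisely what allows the simultaneous bootstrap to close; the parameters $\rho$ and $R$ must be tuned jointly in terms of $\omega_0,\omega_1,M_0,M_1$ and the Sobolev-embedding constant $C$.
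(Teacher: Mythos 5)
Your architecture (Duhamel representation, two-level use of the linear decay rates, smallness only in $\mathbb{H}$) is the right one and is close in spirit to the paper's, which runs a global-in-time Banach fixed point on the set $X^\beta=\{\Psi:\sup_t\|\Psi\|_{\mathbb{H}_1}\lesssim r+1,\ \sup_t\|\Psi\|_{\mathbb{H}}<\beta\}$ rather than local existence plus a continuation/bootstrap; that packaging difference is immaterial. But there is a genuine gap in your central nonlinear estimate, and it is exactly at the point the theorem is designed to address. You bound $\|uu_{tt}\|_2\leq\|u\|_\infty\|u_{tt}\|_2\lesssim\|u\|_{H^2}\|u_{tt}\|_2\lesssim\|\Phi\|_{\mathbb{H}_1}\|\Phi\|_{\mathbb{H}}$, so that $\|\cF(\Phi)\|_{\mathbb{H}_1}\lesssim\|\Phi\|_{\mathbb{H}}\|\Phi\|_{\mathbb{H}_1}$ with the $\mathbb{H}_1$-norm entering to the \emph{first} power. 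Feed this into your low-level barrier: with $E\leq 2M_0\rho$ and $\mathcal{E}\leq R_1$ the correction term in the $E$-inequality is of size $M_0C\cdot 2M_0\rho\,(2M_0\rho+R_1)/\omega_0$, and the requirement that this be $<M_0\rho$ reads $2M_0C(2M_0\rho+R_1)<\omega_0$ --- the factor $\rho$ cancels. Hence your bootstrap closes only if $R_1=2M_1\|\Phi_0\|_{\mathbb{H}_1}$ lies below an \emph{absolute} threshold $\sim\omega_0/(M_0C)$, no matter how small $\rho$ is taken. Your own text concedes this by requiring $\mathcal{E}(0)$ ``below a threshold $R\sim\omega_0/(2M_0M_1C)$''. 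That proves a strictly weaker statement than the theorem, whose entire point (and the paper's advertised novelty) is that $\|\Phi_0\|_{\mathbb{H}_1}$ may be \emph{arbitrarily large}, with only the $\mathbb{H}$-norm required small (the threshold $\rho$ being allowed to depend on the $\mathbb{H}_1$-size $r$, and $R=R(\|\Phi_0\|_{\mathbb{H}_1})$ adapting to the data rather than constraining it).

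The missing ingredient is the sharper interpolation the paper uses in Lemma \ref{fest}: $\|u\|_{\infty}\lesssim\|u\|_{H^1}^{1/2}\|u\|_{H^2}^{1/2}$, which gives
\begin{equation*}
\|uu_{tt}\|_2\lesssim\|\Phi\|_{\mathbb{H}}^{3/2}\|\Phi\|_{\mathbb{H}_1}^{1/2},\qquad\text{hence}\qquad
\tau\|\cF(\Phi)\|_{\mathbb{H}_1}\lesssim\beta^{2}+\beta^{3/2}\sqrt{r+1}.
\end{equation*}
With the $\mathbb{H}_1$-norm entering only to the power $1/2$, the nonlinear feedback into the $\mathbb{H}$-level estimate is $O(\rho^{3/2}R_1^{1/2})=o(\rho)$ once $\rho\lesssim 1/R_1$, so the low-level barrier closes for every $r$ at the price of shrinking $\rho$ with $r$ --- which is precisely the dependence built into the statement. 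Your $\mathcal{E}$-level Gronwall step also has a minor slip (the term $E(s)^2$ produces an additive $O(\rho^2)$ contribution not controlled by $\mathcal{E}(0)$ when $\mathcal{E}(0)\ll\rho^2$; use $E\leq\mathcal{E}$ to fold it in), but that is cosmetic. The first issue is the one you must repair, and once you insert the $3/2$--$1/2$ split your bootstrap does close and yields the full statement.
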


Once global solutions are shown to exist, we take on the issue of asymptotic (in time) stability. 
The final result is positive, as we expected, and holds uniformly (w.r.t $\gamma$) as long as $\gamma \in L^\infty(\Omega)$ and $\gamma(x) \geqslant 0$ a.e. in $\Omega$. \begin{theorem}[\bf Nonlinear Uniform Stability]\label{expnonl}
Let Assumption \ref{geo} on $\Gamma_0$ be in force and assume $\gamma \in L^\infty(\Omega)$ and $\gamma(x) \geqslant 0.$ Then, there exists $\rho>0$ sufficiently small and $M(\rho) , \omega > 0$ such that if $\Phi_0 \in \mathbb{H}^\rho$ then \begin{equation}
	\label{expdec} \|\Phi(t)\|_{\mathbb{H}_1} \leqslant M(\rho) e^{-\omega t}\|\Phi_0\|_{\mathbb{H}_1}, \qquad t \geqslant 0
	\end{equation} where $\Phi$ is the mild solution given by Theorem \ref{nsem}.
\end{theorem}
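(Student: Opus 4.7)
The plan is to combine the linear exponential decay of $\{T(t)\}_{t\geq 0}$ on $\mathbb{H}_1$ (Theorem \ref{thm34n}(ii)) with a two-level estimate on the nonlinearity that extracts a factor of the small $\mathbb{H}$-norm, and then close via a Gronwall argument. Global existence and the invariance $\|\Phi(t)\|_{\mathbb{H}}<\rho$ are already guaranteed by Theorem \ref{nsem}, so the only work is to convert this low-level smallness into high-level decay.

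The first step is to estimate $\cF(\Phi)$ in the $\mathbb{H}_1$-norm. Since $\cF(\Phi)=(0,0,\tau^{-1}(u_t^2+u u_{tt}))^\top$, the first two components vanish and the $\mathbb{H}_1$-conditions on $\xi_1$ (Laplacian and boundary traces) are trivially satisfied, hence $\|\cF(\Phi)\|_{\mathbb{H}_1}=\tau^{-1}\|u_t^2+u u_{tt}\|_{L^2(\Omega)}$. Using $\mathbb{H}\sim H^1\times H^1\times L^2$, $\mathbb{H}_1\hookrightarrow H^2\times H^1\times L^2$, together with the Sobolev embeddings $H^1(\Omega)\hookrightarrow L^4(\Omega)$ and $H^2(\Omega)\hookrightarrow L^\infty(\Omega)$ valid for $d\leq 3$, I bound
$$\|u_t^2\|_{L^2}\leq C\|u_t\|_{L^4}^2\leq C\|u_t\|_{H^1}^2\leq C\|\Phi\|_{\mathbb{H}}\,\|\Phi\|_{\mathbb{H}_1},$$
$$\|u u_{tt}\|_{L^2}\leq \|u\|_{L^\infty}\|u_{tt}\|_{L^2}\leq C\|u\|_{H^2}\|u_{tt}\|_{L^2}\leq C\|\Phi\|_{\mathbb{H}_1}\|\Phi\|_{\mathbb{H}}.$$
Combining, one obtains the key nonlinear estimate $\|\cF(\Phi)\|_{\mathbb{H}_1}\leq C\|\Phi\|_{\mathbb{H}}\|\Phi\|_{\mathbb{H}_1}$. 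The essential point is that $\mathbb{H}_1$ supplies the $H^2$-control of $u$ needed to tame $\|u\|_\infty$, while the $\mathbb{H}$-smallness controls $u_t$ and $u_{tt}$.

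Next, I apply the variation of constants formula \eqref{formnon} and Theorem \ref{thm34n}(ii):
$$\|\Phi(t)\|_{\mathbb{H}_1}\leq M_1 e^{-\omega_1 t}\|\Phi_0\|_{\mathbb{H}_1}+M_1\int_0^t e^{-\omega_1(t-s)}\|\cF(\Phi(s))\|_{\mathbb{H}_1}\,ds.$$
By Theorem \ref{nsem}, choosing $\rho$ small enough ensures $\|\Phi(s)\|_{\mathbb{H}}<\rho$ for all $s\geq 0$, so the nonlinear estimate above gives
$$\|\Phi(t)\|_{\mathbb{H}_1}\leq M_1 e^{-\omega_1 t}\|\Phi_0\|_{\mathbb{H}_1}+CM_1\rho\int_0^t e^{-\omega_1(t-s)}\|\Phi(s)\|_{\mathbb{H}_1}\,ds.$$
Multiplying through by $e^{\omega_1 t}$ and invoking Gronwall's inequality on the function $s\mapsto e^{\omega_1 s}\|\Phi(s)\|_{\mathbb{H}_1}$ yields
$$\|\Phi(t)\|_{\mathbb{H}_1}\leq M_1\|\Phi_0\|_{\mathbb{H}_1}\,e^{-(\omega_1-CM_1\rho)t}.$$
Shrinking $\rho$ further so that $CM_1\rho<\omega_1$ (and keeping $\rho$ below the threshold of Theorem \ref{nsem}) produces the claimed decay with $M(\rho):=M_1$ and $\omega:=\omega_1-CM_1\rho>0$.

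The main potential obstacle is not analytic but structural: the smallness furnished by Theorem \ref{nsem} is only in $\mathbb{H}$, not in $\mathbb{H}_1$, so one must avoid any estimate that produces a power like $\|\Phi\|_{\mathbb{H}_1}^{1+\epsilon}$, which would blow up under Gronwall. The estimate in Step 1 achieves exactly this: it is \emph{linear} in $\|\Phi\|_{\mathbb{H}_1}$ with coefficient $C\|\Phi\|_{\mathbb{H}}\leq C\rho$, so the nonlinearity acts as a small bounded perturbation of the exponentially stable linear dynamics. Were the problem posed in higher dimensions, the embedding $H^2\hookrightarrow L^\infty$ would fail and one would have to use Gagliardo--Nirenberg interpolation $\|u\|_\infty\leq C\|u\|_{H^1}^{1-\theta}\|u\|_{H^2}^{\theta}$ to keep the $\|\Phi\|_{\mathbb{H}_1}$-power below one; in $d\leq 3$ the direct embedding suffices.
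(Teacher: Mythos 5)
Your proposal is correct and follows essentially the same route as the paper: the bilinear estimate $\|\cF(\Phi)\|_{\mathbb{H}_1}\lesssim\|\Phi\|_{\mathbb{H}}\|\Phi\|_{\mathbb{H}_1}$ (the paper's Lemma \ref{fest}, estimate \eqref{contcF}, with $\|\Phi\|_{\mathbb{H}}<\beta$ supplied by the invariance in Theorem \ref{nsem}), the variation-of-parameters representation, the linear decay of $T(t)$ on $\mathbb{H}_1$, and a Gronwall closure under the smallness condition $CM_1\rho<\omega_1$. The only cosmetic difference is that the paper invokes the Gronwall--Beesack inequality directly on the unweighted integral inequality and lands on the constants $2M_1$ and $\omega_1/2$, whereas you weight by $e^{\omega_1 t}$ and apply classical Gronwall; the two are equivalent here.
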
 
\ifdefined\xxxxx
\begin{corollary}\label{corexp} With reference to Section 5, let $\beta_0$ be the biggest number such that the map $\Theta$ has a fixed point in $X^{\beta_0}$ which is, moreover, uniformly exponentially stable as in Theorem \ref{expnonl}. Let $\omega: (0,\beta_0] \to \BR_+$ be the function that maps each $\beta>0$ to the decay rate $\omega(\beta)$. Then there exists another function $\underline \omega: (0,\beta_0] \to \BR_+$ such that $\omega(\beta) \geqslant \underline\omega(\beta)$ for all feasible $\beta$ and \begin{equation}
	\label{limexp} \lim\limits_{\beta \to 0} \underline\omega(\beta) = \omega_1,
	\end{equation} where $\omega_1$ is the decay rate of the linear semigroup $T(t).$
\end{corollary}
\fi
We notice that repeating the statement \textit{there exists $\rho >0$ such that if $\Phi_0 \in \mathbb{H}^\rho$} in the Theorem above is not redundant. In fact, one might need to require even smaller initial data to yield exponential decay. This again highlights the advantage of requiring smallness only in $\mathbb{H}$. For more details, see Section \ref{refext2}.
\subsection{Discussion}
 The main novelty of this paper is that we study stabilizability of a  {\it nonlinear} {\it  critical} JMGT equation with  Neumann--Robin undissipated portion of the boundary. 
 Should the problem be subcritical (i.e. $\gamma(x)  > \gamma_0 > 0$  for  $ x\in \Omega$, the difficulty created  by the  failure of Lopatinski condition would not enter the picture. Simply because  there will be no need to propagate stability from the boundary into the interior.  As already mentioned before, linear dynamics with absorbing boundary conditions on $\Gamma_1$ and zero Dirichlet data on $\Gamma_0$  subject to star--shaped conditions has  been considered in \cite{bongarti_lasiecka_DSOCIP_2021,bongarti_lasiecka_triggiani_AA_2021}. Mathematical difficulties in propagating stability through  the undissipated part of the boundary are  not present in this case. In order to cope with the difficulties we shall employ geometric constructs developed earlier  in \cite{lasiecka_triggiani_zhang_CM_2000}. These allow to construct suitable--non--radial--vector fields  which result from tangential bending of radial and star--shaped ones.  These newly constructed fields  propagate the needed estimates through un--dissipated part of the boundary. 

In order to treat  the nonlinear problem,  the  approach used in the past (for subcritical case)  was to use the so called  ``barrier's method'' based on contradiction argument. However, this presents  several technical  difficulties in the present scenario,  even at the level of  low frequencies (lower order terms).  Hence, in this paper,  we exploit another technique which, to the best of our knowledge, is new and makes a strong use of the fact that we \emph{only} require initial data to be small in $\mathbb{H}.$ One of the advantages of such construction (for JMGT) was already exploited by the authors in \cite{bongarti_charoenphon_lasiecka_JEE_2021} in allowing extension by density in the nonlinear environment. In this paper we discovered that it also allows  to: \begin{itemize}
		\item[\bf a)] prove  global existence and exponential stability by the representation of the solution and  two-level stability of linear flows. Here, the smallness interplay comes to the  picture through  a nonlinear propagation of the  estimate of the type \begin{equation}\|\cF(\Phi)\|_{\mathbb{H}_1} \leqslant C_1(\|\Phi\|_{\mathbb{H}} )C_2(|\Phi\|_{\mathbb{H}_1}.)\end{equation}
		where the size of  $C_1(\|\Phi\|_{\mathbb{H}})$ can be controlled by  $\rho$. See Theorem \ref{nsem}.
		\item[\bf b)] obtain, to some extent, a continuity property of the decay rate with respect to the $\mathbb{H}$--size of the initial data and the decay rate of the linear flow, $\omega_1$. In general, we prove that if $\varepsilon$ is the $\mathbb{H}$--size of the initial data and $\omega(\varepsilon)$ is the corresponding decay rate, then there exists $\underline{\omega}(\varepsilon)$ such that $\omega(\varepsilon) \geqslant \underline\omega(\varepsilon)$ and $\underline\omega(\varepsilon) \to \omega_1$ as $\varepsilon \to 0^+.$ 
	\end{itemize} 
 
\ifdefined\xxxxxx
The approach we are going to use to study the boundary stabilization problem consists of the following. \begin{itemize}
    \item[\bf (i)] We  will first show that the operator $\cA$ generates a $C_0$--semigroup $\{S(t)\}_{t \geqslant 0}$ on $\mathbb{H}$. As it is well known, it is enough to get $\mathbb{H}$--valued continuous solutions of the first order system $\Phi_t = \cA\Phi + G$ as long as the forcing term $G$ belongs to $L^1(\BR_+,\mathbb{H})$ and initial data \eqref{IC1} are assumed in $\mathbb{H}.$ Such solutions can be represented by the variation of parameter formula, i.e., $$\Phi(t) = S(t)\Phi_0 + \int_0^t S(t-\sigma)G(\sigma)d\sigma.$$ Moreover, they become classical under the assumption of smoothness of the data. Since the nonlinear terms generate elements from the outside of the main phase space $\mathbb{H} $, an additional step of analyzing linear wellposedness in a "tighter space"  will be necessary as  explained below.
    The "tighter" space $\mathbb{H}_1$ is between  $\mathbb{H}$ and $\D(\cA) $. 
    
    \item[\bf (ii)] Second, we use the representation of solutions as given above to construct fixed point solutions for the semilinear problem. 
    However, notice that $\cF(\Phi)$ does \emph{not} necessarily belong to $\mathbb{H}$ for $\Phi \in \mathbb{H}$. This indicates a need for  a "tighter" space which is also invariant under the dynamics.
    To this end a new phase space $\mathbb{H}_1$ is introduced and defined below.
     \begin{equation}\label{newphase}\mathbb{H}_1 = \{ \vec{\xi} \in \mathbb{H}; \Delta \xi_1 \in L_2(\Omega);   \left[\lambda \partial_\nu \xi_1 + \kappa_0\xi_1\right] _{\Gamma_0}= 0;  \left[\partial_\nu \xi_1+ \kappa_1\xi_2\right] _{\Gamma_1}= 0
 \} 
\end{equation} 
With the norm 
$$\|\vec{\xi} \|^2_{\mathbb{H}_1} = \|\vec{\xi} \|^2_{\mathbb{H}} + \|\Delta \xi_1\|_2^2 +  \|\xi_1\|^2_{H^{1/2}(\Gamma_0)}+ \|\xi_2\|^2_{H^{1/2}(\Gamma_1)}$$ 
or equivalently 
$$\|\vec{\xi} \|^2_{\mathbb{H}_1} = \|\vec{\xi} \|^2_{\mathbb{H}} + \|\Delta \xi_1\|_2^2 +  \|\partial_{\nu} \xi_1\|^2_{H^{1/2}(\Gamma)}$$ 
Note that the  boundary conditions in the definition of  the space $\mathbb{H}_1$ are  well defined due to the property: 
$\Delta \xi_1 \in L_2(\Omega) $ and $\xi_1 \in H^1(\Omega) $ then $ \partial_{\nu} \xi_1 \in H^{-1/2}(\Gamma) $ -- the latter allowing to define the boundary conditions. 
We  also note that  since $\xi_{1},\xi_2 \in H^1(\Omega)$ we have $\xi_{i}|_{\Gamma}  \in H^{1/2} (\Gamma)$ ($i = 1,2$) and therefore $ \partial_{\nu} \xi_1\in H^{1/2}(\Gamma)$. This along with elliptic regularity implies: 
$$\mathbb{H}_1 \subset H^2(\Omega) \times H^1(\Omega) \times L_2(\Omega) $$  with a proper but  {\it not dense} injection. We shall show that the operator $\cA$ also generates a $C_0$--semigroup $\{T(t)\}_{t \geqslant 0}$ on $\mathbb{H}_1.$ Notice that  the nonlinear term is invariant under $\mathbb{H}_1 $ topology in dimensions 1-3. 
    
    \item[\bf (iii)] The above allows for a construction of local in time solutions to the nonlinear problem.  Once the  existence and uniqueness of $\mathbb{H}_1$--valued continuous solutions are obtained, we extend the long time behavior  estimates -- already available for $\mathbb{H}$--solutions -- to the $\mathbb{H}_1$--solutions.
    This will lead  to uniform exponential stability  of linear problem in $\mathbb{H}_1$ topology. 
    This result,  beyond being an important result on its on, constitutes a solid step into the construction of global  solutions for the  nonlinear model \eqref{modnl} without an appeal to a barrier's method -- as in the previous studies carried in subcritical cases only.
    
    \item[\bf (iv)]  For semilinear  and unstructured equations as \eqref{modnl}, it is routine to consider smallness of initial data for the construction of fixed point solutions. Similar to the global wellposedness result in \cite{bongarti_charoenphon_lasiecka_JEE_2021}, we also relax the smallness assumption on the initial data by requiring it to be small only in $\mathbb{H}$ while having arbitrary $\mathbb{H}_1$--size. This point is very important for further development. 
    
    \item[\bf (v)] Once global nonlinear  solutions are shown  to exist, we seek to stabilize. There are a couple of routes leading to exponential decay of the nonlinear flows. With multi--level energy estimates available for linear solutions, one often seeks to extend such estimates for the nonlinear flow  by taking an  advantage once again of the damping. The main difficulties of this route are \begin{itemize}
    	\item[\bf a)] Neither low nor high energy is dissipative, which will likely require an iterative  "squeezing" argument.
    	
    	\item[\bf b)] Since the relaxed geometrical condition introduces lower order terms, a very careful compactness uniqueness argument would be required to absorb these terms.
    \end{itemize}  

In this paper, however, we exploit another technique which, to the best of our knowledge, is new and makes a strong use of the fact that we \emph{only} require initial data to be small in $\mathbb{H}.$ One of the advantages of such construction (for JMGT) was already exploited by the authors in \cite{bongarti_charoenphon_lasiecka_JEE_2021} in allowing extension by density in nonlinear environments. {\bf \color{blue}In this paper we discovered that it also allows us to \begin{itemize}
		\item[\bf a)] prove exponential stability in a very simple way, basically by the representation of the solution and stability of linear flows. Here the smallness interplay comes to picture through an estimate of the type \begin{equation}\|\cF(\Phi)\|_{\mathbb{H}_1} \leqslant C\|\Phi\|_{\mathbb{H}}\|\Phi\|_{\mathbb{H}_1}.\end{equation}
		
		\item[\bf b)] obtain, to some extent, a continuity property of the decay rate with respect to the $\mathbb{H}$--size of the initial data and the decay rate of the linear flow, say, $\omega_1$. In general, we prove that if $\varepsilon$ is the $\mathbb{H}$--size of the initial data and $\omega(\varepsilon)$ is the corresponding decay rate, then there exists $\underline{\omega}(\varepsilon)$ such that $\omega(\varepsilon) \geqslant \underline\omega(\varepsilon)$ and $\underline\omega(\varepsilon) \to \omega_1$ as $\varepsilon \to 0^+.$ 
	\end{itemize} }
\end{itemize}

{\bf Marcelo-what you are trying to say here?}. Item (i) above is obtained in \cite{bongarti_lasiecka_rodrigues_DCDS_2021}. Since the statement on regularity and representation of solutions is standard once generation of strongly continuous semigroup is established, we  will only prove  (in the next section) generation of semigroup in both $\mathbb{H}$ and $\mathbb{H}_1.$

	Next, we study stability properties of both $S(t)$ and $T(t)$ assuming the general degenerated case for $\gamma$, i.e., $\gamma \in L^\infty(\Omega)$ and $\gamma(x) \geqslant 0$ a.e. in $\Omega.$ This includes the completely degenerate (critical) case when $\gamma =0 $ and the uncontrolled dynamics is unstable.  With  a feedback boundary control, we will be able to show that the semigroups can be stabilized -- but under additional  geometric conditions  which however  are stronger than  the ones typically assumed in  a boundary  stabilization theory of hyperbolic dynamics. 
 It is clear (\Mar{cite Las-Trig}) that  if $\Gamma_0 = \emptyset$ then the entire boundary $\Gamma$ is dissipated and therefore stability results would hold true without  any additional geometric restrictions. 
 Assuming $\Gamma_0 \neq \emptyset,$ and $\Gamma_0$ is star-shaped (standard condition),  classical stability methods (multipliers) do not work due to conflicting 
 signs of  radial vector fields on the boundary $\Gamma_1$.   This fact has been recognized a long time ago \Mar{What is the reference here? cite\{ltz\} Lasiecka-Troggiani-Zhang
 Lasiecka, I.; Triggiani, R.; Zhang, X. Nonconservative wave equations with unobserved Neumann B.C.: global uniqueness and observability in one shot. Differential geometric methods in the control of partial differential equations (Boulder, CO, 1999), 227–325, Contemp. Math., 268, Amer. Math. Soc., Providence, RI, 2000. }.  To remedy the problem we are going to use special constructs to work with  special  geometric constructions which allow to  resolve the dychotomy on the  uncontrolled boundary.
 This, however , requires an additional geometric restriction imposed by {\it convexity}  of uncontrolled part of the boundary $\Gamma_0 $.  Under  convexity and star shaped  requirement  one can  obtain the following:

There exists a vector field $h(x) = [h_1(x), \cdots, h_d(x)] \in C^2 (\overline{\Omega})$ such that 
\begin{subequations}\label{assgeo}
		\begin{equation}\label{assc1}
		    h \cdot \nu = 0 \ \mbox{on} \ \Gamma_0 \end{equation}
	with $\nu$ being the  unit outward normal, and that for some constant $\rho > 0$ and all vector $u(x) \in [L^2(\Omega)]^n$, we have \begin{equation}
				\int_\Omega J(h)|u(x)|^2 d\Omega \geqslant \rho \int_\Omega |u(x)|^2d\Omega,  \label{E77-8}
			\end{equation} where $J(h)$ represents the Jacobian matrix of $h$.
\end{subequations}\begin{remark}
We note  that the more general typical star shaped  condition $h \cdot \nu \leqslant 0$ on $\Gamma_0$  is not sufficient. This is due to the presence of tangential  derivatives on uncontrolled part of the boundary which can not be "absorbed"  via dissipation by  the microlocal argument \Mar{reference-our paper onstabilization of Neumann without geometric conditions
Lasiecka, I.; Triggiani, R. Uniform stabilization of the wave equation with Dirichlet or Neumann feedback control without geometrical conditions. Appl. Math. Optim. 25 (1992), no. 2, 189–224.. Also paper with Tataru.  } which leads to the following local estimate $$|\partial_{\tau} u|_{\Sigma_0} \leq C |u_t|_{\Sigma_0 } +C |\partial_{\nu} u|_{\Sigma_0} +  lot_{Q} $$ valid on solutions. Above, $lot_Q$ mean lower order terms on $Q=\Omega \times [0,T].$ By "bending" on the boundary $\Gamma_0$  a radial vector field allows to eliminate contribution of tangential derivatives on the boundary $\Gamma_0$. See Remark \ref{rmkgeo}. 
\end{remark}
\fi
\ifdefined\xxxxxxxxxxxx
The final result for linear stability is summarized in the following theorem.
\begin{theorem}{\bf [Uniform stability]} \label{thm34n}
 Let $\gamma (x) \geq 0$.  Assume that  $\Gamma_0$ is star-shaped and convex. Then \begin{itemize}
 	\item[\bf (i)] The semigroup $\{S(t)\}_{t \geqslant 0}$ generated by $\cA$ in $\mathbb{H}$ is uniformly exponentially stable with decay rate $\omega_0 > 0.$
 	
 	\item[\bf (ii)] The semigroup  $\{T(t)\}_{t \geqslant 0}$ generated by $\cA$ in $\mathbb{H}_1$ is uniformly exponentially stable with decay rate $\omega_1>0$, where $\omega_1 < \omega_0$.
 \end{itemize} 
\end{theorem}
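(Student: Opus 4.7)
\bigskip

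\noindent\textbf{Proof proposal for Theorem \ref{thm34n}.}

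The plan is to establish part (i) by a multiplier/observability argument tailored to the failure of Lopatinski on $\Gamma_0$, and then bootstrap to part (ii) by differentiating the equation in time. For (i), I would start from the linear JMGT equation
\begin{equation*}
\tau u_{ttt} + \alpha u_{tt} - c^2 \Delta u - b\Delta u_t = 0, \qquad b = \delta + \tau c^2,
\end{equation*}
and set up the standard cascade energy
\begin{equation*}
E(t) \;\sim\; \tau \|z\|_2^2 + \tfrac{b}{c^2}\|\nabla u_t\|_2^2 + \|\nabla u\|_2^2 + \int_{\Gamma_0}\kappa_0|u|^2\,d\Gamma + \gamma_{\min}\|u_t\|_2^2,
\end{equation*}
where $z := u_{tt}+(c^2/b)u_t$ is the standard JMGT cascade variable. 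Differentiating and using the boundary condition $\partial_\nu u = -\kappa_1 u_t$ on $\Gamma_1$ produces the dissipation
\begin{equation*}
\tfrac{d}{dt}E(t) \;\leqslant\; -\int_\Omega \gamma(x)\,|z|^2\,dx - c_1\int_{\Gamma_1}\kappa_1|u_t|^2\,d\Gamma + \text{l.o.t.},
\end{equation*}
so that when $\gamma \equiv 0$ the only dissipation is on $\Gamma_1$, which is exactly the configuration that forces a careful propagation of the estimate from $\Gamma_1$ into the interior through the undissipated $\Gamma_0$.

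The heart of the proof is an observability inequality $E(0) \leqslant C\int_0^T \text{Diss}(t)\,dt + \text{l.o.t.}$ derived by the flux multiplier $h\cdot\nabla u$ plus the companion multiplier $(\mathrm{div}\,h)u$, where the vector field $h\in C^2(\overline\Omega)$ is the \emph{tangentially bent} field of Lasiecka--Triggiani--Zhang \cite{lasiecka_triggiani_zhang_CM_2000} provided by Assumption \ref{geo}: $h\cdot\nu = 0$ on $\Gamma_0$ and $\int_\Omega J(h)|w|^2 \geqslant \rho\int_\Omega |w|^2$. The condition $h\cdot\nu = 0$ kills the normal-derivative boundary term on $\Gamma_0$, while the convexity of the level-set function defining $\Gamma_0$ ensures the second-fundamental-form contribution $\int_{\Gamma_0}\langle D h\,\tau,\tau\rangle|\nabla_\tau u|^2$ has the right sign, allowing the offending tangential-derivative boundary terms on $\Gamma_0$ to be absorbed rather than left uncontrolled (which is where Lopatinski would normally intervene). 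This step is the main obstacle, and it is where star-shapedness plus convexity are essential; without convexity the bent field produces a wrong-signed $\Gamma_0$ term that cannot be treated by microlocal absorption as in the Dirichlet case. The remaining lower-order terms are absorbed by the standard compactness--uniqueness procedure, using that $u\equiv 0$ is the only solution of the over-determined JMGT system with $u=u_t=0$ on $\Gamma_1$ and $\partial_\nu u+\kappa_0 u=0$ on $\Gamma_0$. The observability inequality together with the dissipation identity gives, via the Datko--Pazy theorem (or equivalently the standard semigroup decay argument), uniform exponential decay $\|S(t)\Phi_0\|_{\mathbb{H}}\leqslant M_0 e^{-\omega_0 t}\|\Phi_0\|_{\mathbb{H}}$.

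For part (ii) I would proceed by a time-differentiation/bootstrap argument. For $\Phi_0\in\mathbb{H}_1$ smoothed, set $v := u_t$; then $v$ satisfies the \emph{same} linear JMGT equation with the same boundary conditions, now with initial data $(v(0),v_t(0),v_{tt}(0)) = (u_1,u_2,u_{ttt}(0))$, where the trace $u_{ttt}(0) = \tau^{-1}[-\alpha u_2 + c^2\Delta u_0 + b\Delta u_1]$ lies in $L^2(\Omega)$ precisely because $\Phi_0\in\mathbb{H}_1$ (the boundary compatibility conditions in \eqref{newphase} guarantee this). Applying part (i) to $v$ yields $\|(u_t,u_{tt},u_{ttt})(t)\|_{\mathbb{H}}\leqslant M_0 e^{-\omega_0 t}\|(u_1,u_2,u_{ttt}(0))\|_{\mathbb{H}}\lesssim e^{-\omega_0 t}\|\Phi_0\|_{\mathbb{H}_1}$. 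Inverting the equation $c^2\Delta u = \tau u_{ttt}+\alpha u_{tt}-b\Delta u_t$ recovers $\|\Delta u(t)\|_2$, and the boundary traces $\|\partial_\nu u(t)\|_{H^{1/2}(\Gamma)}$ are controlled through the boundary conditions from $\|u\|_{H^1}$, $\|u_t\|_{H^1}$ on $\Gamma_0$, $\Gamma_1$ respectively. Combining yields $\|T(t)\Phi_0\|_{\mathbb{H}_1} \lesssim e^{-\omega_1 t}\|\Phi_0\|_{\mathbb{H}_1}$. The rate $\omega_1$ is slightly smaller than $\omega_0$ because the inversion of $\Delta$ together with absorbing the l.o.t.\ from the compactness--uniqueness step costs an arbitrarily small fraction of $\omega_0$; the density argument extends the estimate from smooth data to all of $\mathbb{H}_1$.
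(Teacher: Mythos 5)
Your part (i) is essentially the paper's argument: the same bent vector field $h$ of Lasiecka--Triggiani--Zhang with $h\cdot\nu=0$ on $\Gamma_0$ and coercive Jacobian, the pair of multipliers $h\cdot\nabla(\cdot)$ and $(\mathrm{div}\,h)(\cdot)$, tangential--trace (microlocal) estimates on $\Gamma_1$, compactness--uniqueness to absorb the lower order terms, and then the dissipation identity. The only cosmetic difference is that the paper runs the multipliers on the reduced variable $z=u_t+\tfrac{c^2}{b}u$ (so that the third--order equation becomes a damped wave equation with the forcing $-\gamma u_{tt}$) and recovers the decay of $\|u\|_{\mathcal{D}(A^{1/2})}$ afterwards from the ODE $bu_t+c^2u=bz$; your direct formulation on $u$ is equivalent in substance.

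Part (ii) is where there is a genuine gap. Your bootstrap sets $v=u_t$ and applies part (i) to the triple $(u_1,u_2,u_{ttt}(0))$, claiming this lies in $\mathbb{H}$ with norm controlled by $\|\Phi_0\|_{\mathbb{H}_1}$. It does not: membership in $\mathbb{H}=H^1\times H^1\times L^2$ for that triple requires $u_2\in H^1(\Omega)$ and $u_{ttt}(0)=\tau^{-1}\left[-\alpha u_2+c^2\Delta u_0+b\Delta u_1\right]\in L^2(\Omega)$, hence $\Delta u_1\in L^2(\Omega)$. The space $\mathbb{H}_1$ defined in \eqref{newphase} imposes only $\Delta\xi_1\in L^2(\Omega)$ and two compatibility conditions on $\xi_1$; it imposes nothing on $\Delta\xi_2$ and only $\xi_3\in L^2(\Omega)$. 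So the data regularity you need is essentially that of $\mathcal{D}(\mathcal{A})$, not of $\mathbb{H}_1$, and the resulting bound would be in terms of a graph--type norm that is \emph{not} dominated by $\|\Phi_0\|_{\mathbb{H}_1}$ --- which is why a density argument cannot close the gap (decay in the $\mathcal{D}(\mathcal{A})$--norm is anyway immediate from part (i) since $\mathcal{A}$ commutes with $S(t)$, and the whole point of $\mathbb{H}_1$ is that it is strictly larger than $\mathcal{D}(\mathcal{A})$). The same objection applies to your final inversion step, which needs $\|\Delta u_t(t)\|_2$ and $\|u_{ttt}(t)\|_2$. The paper's route avoids this entirely: it tests the $z$--equation with $\Delta u$ (Proposition on identity \eqref{deltaz}), combines with the relation $\Delta(bu_t+c^2u)=b\Delta z$ tested against $\Delta u$ to obtain \eqref{eq11}, i.e. $b\|\Delta u(T)\|_2^2+c^2\int_0^T\|\Delta u\|_2^2\lesssim \mathcal{E}(0)+\int_0^TE_1$, recovers the boundary traces from the invariance of the boundary conditions, and concludes $\int_0^\infty\mathcal{E}(\sigma)\,d\sigma<\infty$ and hence exponential decay by Datko--Pazy. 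This only consumes $\Delta u_0\in L^2(\Omega)$, which is exactly the extra regularity encoded in $\mathbb{H}_1$. You would need to replace your bootstrap by an argument of this type (or otherwise produce an a priori bound on $\|\Delta u(t)\|_2$ using only $E_1$ and $\|\Delta u_0\|_2$).
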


Once linear wellposedness and uniform stability  of linearization are established, our goal is to prove generation of nonlinear semigroup  on $\mathbb{H}_1 $ by the means of a fixed point argument. To accomplish this, initial data need to be assumed sufficiently small. How small? This is  an important question as argued
  in \cite{bongarti_charoenphon_lasiecka_JEE_2021}. We will be able to show that smallness will be only   imposed at the lowest level of regularity, while higher derivatives can be large. 
  As a consequence,  in the following theorem we show existence of $\mathbb{H}_1$--valued solutions given $\mathbb{H}_1$ initial data which are small in $\mathbb{H}$ only. The proof, given in Section \ref{nonlinearsem}, relies on estimates derived via interpolation inequalities which allows to demonstrate invariance  of a "small" ball under the dynamics. 

We start specifying the notion of solution for the semilinear problem \eqref{modnl} supplemented with \eqref{BC1} and \eqref{IC1}. We denote the initial data here by $\Phi_0 = (u_0,u_1,u_2)^\top.$ Given $T>0$, we say that $$\Phi(t) = (u(t),u_t(t),u_{tt}(t))$$ is a {\bf mild solution} for the system  \eqref{modnl}, \eqref{BC1} and \eqref{IC1} provided $\Phi  \in C([0,T],\mathbb{H}_1)$ and \begin{equation}
\label{formnon}\Phi(t) = T(t)\Phi_0 + \int_0^t T(t-\tau)\cF(\Phi)(\tau)d\tau,
\end{equation}

Before stating the theorem, we denote by $\mathbb{H}^\rho$ (for $\rho > 0$) the set $$\mathbb{H}^\rho := \left\{\Phi \in \mathbb{H}_1; \|\Phi\|_{\mathbb{H}} <\rho \right\}.$$\begin{theorem}[\bf Global Solutions]\label{nsem} Under theb Assumption...... There exists $\rho > 0$ sufficiently small such that, given any $\Phi_0 \in \mathbb{H}^\rho$ the formula \eqref{formnon} defines a continuous $\mathbb{H}_1$--valued mild solution for the system \eqref{modnl}, \eqref{BC1} and \eqref{IC1}. Moreover, for such $\rho > 0$, there exists $R = R(\|\Phi_0\|_{\mathbb{H}_1})$ such that all trajectories starting in $B_{\mathbb{H}^\rho}(0,R)$\footnote{The $\mathbb{H}^\rho$--ball centered at the origin and with radius $R.$} remain in $B_{\mathbb{H}^\rho}(0,R)$ for all $t \geqslant 0.$\end{theorem}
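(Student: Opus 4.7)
The strategy is to realize the mild solution as a fixed point of the integral map
\begin{equation*}
\Theta(\Phi)(t) := T(t)\Phi_0 + \int_0^t T(t-s)\,\cF(\Phi(s))\,ds,
\end{equation*}
on a doubly parametrized closed subset of $C([0,\infty);\mathbb{H}_1)$, and then to read off the invariance claim directly from the fixed-point construction. Two ingredients drive the argument: the two-level uniform exponential stability of $T(t)$ on both $\mathbb{H}$ and $\mathbb{H}_1$ granted by Theorem \ref{thm34n}, together with a nonlinear structural estimate that isolates the $\mathbb{H}$-norm as a small prefactor in front of the possibly large $\mathbb{H}_1$-norm.

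\textbf{Step 1: key nonlinear estimate.} Because $\cF(\Phi) = (0,0,\tau^{-1}(u_t^2 + u u_{tt}))^{\top}$ vanishes in its first two slots, $\|\cF(\Phi)\|_{\mathbb{H}_1} = \tau^{-1}\|u_t^2 + u u_{tt}\|_2$. In dimensions $d \leq 3$, I combine the Gagliardo--Nirenberg inequality $\|u_t\|_4^2 \leq C\|u_t\|_2^{1/2} \|u_t\|_{H^1}^{3/2}$ and Agmon's inequality $\|u\|_\infty \leq C\|u\|_{H^1}^{1/2} \|u\|_{H^2}^{1/2}$ with the elliptic regularity $\|u\|_{H^2} \leq C\|\Phi\|_{\mathbb{H}_1}$ encoded in the definition of $\mathbb{H}_1$, and use $\|\Phi\|_{\mathbb{H}} \leq \|\Phi\|_{\mathbb{H}_1}$ to absorb the loose half-powers; the upshot is the structural bound
\begin{equation*}
\|\cF(\Phi)\|_{\mathbb{H}_1} \ \leq\ C_\star\,\|\Phi\|_{\mathbb{H}}\,\|\Phi\|_{\mathbb{H}_1},
\end{equation*}
together with a parallel bilinear Lipschitz estimate for $\cF(\Phi) - \cF(\Psi)$ derived from $u_t^2 - v_t^2 = (u_t+v_t)(u_t-v_t)$ and $u u_{tt} - v v_{tt} = (u-v)u_{tt} + v(u_{tt}-v_{tt})$.

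\textbf{Step 2: invariant set and fixed point.} For $\rho, R > 0$ introduce the closed set
\begin{equation*}
\mathcal{Z}_{\rho,R} := \{\Phi \in C([0,\infty);\mathbb{H}_1)\,:\,\sup_{t\geq 0}\|\Phi(t)\|_{\mathbb{H}} \leq \rho,\ \sup_{t\geq 0}\|\Phi(t)\|_{\mathbb{H}_1} \leq R\}.
\end{equation*}
Writing $\|T(t)x\|_{\mathbb{H}_i} \leq M_i e^{-\omega_i t}\|x\|_{\mathbb{H}_i}$ for $i = 0,1$ from Theorem \ref{thm34n} and plugging the estimate of Step 1 into the variation-of-parameters formula, one obtains for every $\Phi \in \mathcal{Z}_{\rho,R}$
\begin{equation*}
\sup_{t} \|\Theta(\Phi)(t)\|_{\mathbb{H}_1} \leq M_1 \|\Phi_0\|_{\mathbb{H}_1} + \frac{M_1 C_\star \rho R}{\omega_1},\qquad \sup_{t} \|\Theta(\Phi)(t)\|_{\mathbb{H}} \leq M_0 \|\Phi_0\|_{\mathbb{H}} + \frac{M_0 C_\star \rho R}{\omega_0}.
\end{equation*}
Selecting $R = R(\|\Phi_0\|_{\mathbb{H}_1})$ appropriately (essentially $R \simeq 2 M_1 \|\Phi_0\|_{\mathbb{H}_1}$ whenever that falls below the admissible threshold) and picking $\rho$ small enough for the right-hand sides to be dominated by $R$ and $\rho$ respectively closes the self-map $\Theta(\mathcal{Z}_{\rho,R}) \subseteq \mathcal{Z}_{\rho,R}$. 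A short-time contraction on $C([0,T^*];\mathbb{H}_1)$ supplied by the bilinear Lipschitz estimate, together with the a priori bound $\Phi \in \mathcal{Z}_{\rho,R}$ ruling out finite-time blow-up, then produces via Banach's theorem a unique global mild solution in $\mathcal{Z}_{\rho,R}$; the ball-invariance statement follows directly, with $R_1$ playing the role of a slightly enlarged uniform bound.

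\textbf{Main obstacle.} The delicate point is the simultaneous propagation of the $\mathbb{H}$-smallness and the $\mathbb{H}_1$-boundedness under the flow. A naive one-level estimate of the form $\|\cF(\Phi)\|_{\mathbb{H}_1} \leq C \|\Phi\|_{\mathbb{H}_1}^2$ would force smallness of the full $\mathbb{H}_1$-norm and defeat the purpose of the theorem. Only by carefully splitting derivatives via Gagliardo--Nirenberg and Agmon (exploiting that $\mathbb{H}_1$ upgrades only $\xi_1$ to $H^2$) and by having a decay rate $\omega_0$ also available on the coarser space $\mathbb{H}$ (Theorem \ref{thm34n}(i)) can both invariance inequalities above be closed for one and the same $\rho$.
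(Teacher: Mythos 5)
Your overall architecture coincides with the paper's: a two-constraint invariant set (the paper's $X^\beta$, with your $\rho$ playing the role of $\beta$ and your $R$ that of $r+1$), the Duhamel formula combined with two-level exponential stability of $T(t)$, the same algebraic splitting of $\cF(\Psi_1)-\cF(\Psi_2)$, and a Banach fixed point taken global via time-uniform bounds. However, there is one step that fails as written: the closure of the $\mathbb{H}$-level invariance. After you absorb the loose half-powers into the clean bilinear bound $\|\cF(\Phi)\|_{\mathbb{H}_1}\leq C_\star\|\Phi\|_{\mathbb{H}}\|\Phi\|_{\mathbb{H}_1}$, your estimate on $\mathcal{Z}_{\rho,R}$ reads
\begin{equation*}
\sup_{t}\|\Theta(\Phi)(t)\|_{\mathbb{H}}\ \leq\ M_0\|\Phi_0\|_{\mathbb{H}}+\frac{M_0 C_\star\,\rho R}{\omega_0},
\end{equation*}
and requiring the right-hand side to be $\leq\rho$ (with, say, $\|\Phi_0\|_{\mathbb{H}}\leq\rho/(2M_0)$) forces $M_0C_\star R/\omega_0\leq 1/2$: the factor $\rho$ cancels, so shrinking $\rho$ does not help, and you are left with an absolute ceiling on $R$ --- exactly the ``admissible threshold'' you allude to. That defeats the purpose of the theorem, which must accommodate arbitrarily large $\|\Phi_0\|_{\mathbb{H}_1}$ at the price of a $\rho$ that may depend on that size.

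The repair is already contained in your Step 1: do not absorb the half-powers. Before absorption, your Gagliardo--Nirenberg/Agmon computation yields the superlinear-in-$\mathbb{H}$ bound $\|\cF(\Phi)\|_{\mathbb{H}_1}\lesssim\|\Phi\|_{\mathbb{H}}^2+\|\Phi\|_{\mathbb{H}}^{3/2}\|\Phi\|_{\mathbb{H}_1}^{1/2}\lesssim\rho^2+\rho^{3/2}R^{1/2}$ on $\mathcal{Z}_{\rho,R}$, which is precisely the paper's estimate \eqref{imp2}. Since $\rho^2+\rho^{3/2}R^{1/2}=o(\rho)$ as $\rho\to0$ for each fixed $R$, the $\mathbb{H}$-invariance then closes by choosing $\rho$ small depending on $R$, which is how the paper's proof actually quantifies $\rho=\rho_\beta$. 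The linear-in-$\|\Phi\|_{\mathbb{H}_1}$ form \eqref{contcF} is used in the paper only where it is harmless (the contraction and the subsequent decay argument); in the $\mathbb{H}$-invariance step the superlinear form is indispensable. The remainder of your argument --- the $\mathbb{H}_1$-invariance, the contraction constant of order $\rho+\rho^{1/2}R^{1/2}$, and the passage to $T=\infty$ --- is sound and matches the paper.
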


The proof of Theorem \ref{nsem} is accomplished by constructing  a suitable fixed point -- see   Section \ref{nonlinearsem}. It is interesting to notice that the effort to prove exponential stability for both $\mathbb{H}$ and $\mathbb{H}_1$--valued solution now pays off by allowing us to obtain existence of local and global solutions in one shot, without using  Barrier's Method and the associated contradiction argument. Our argument is constructive. 

Once global solutions are shown to exist, we take on the issue of asymptotic (in time) stability. 
The final result is positive, as we expected, and holds uniformly (w.r.t $\gamma$) as long as $\gamma \in L^\infty(\Omega)$ and $\gamma(x) \geqslant 0$ a.e. in $\Omega$. \begin{theorem}[\bf Nonlinear Uniform Stability]\label{expnonl}
	STATE the ASSUMPTIONs. Assume $\gamma \in L^\infty(\Omega)$ and $\gamma(x) \geqslant 0.$ Then there exist $\rho>0$ sufficiently small and $M, \omega > 0$ such that if $\Phi_0 \in \mathbb{H}^\rho$ then \begin{equation}
	\label{expdec} \|\Phi(t)\|_{\mathbb{H}_1} \leqslant Me^{-\omega t}\|\Phi_0\|_{\mathbb{H}_1}, \qquad t \geqslant 0
	\end{equation} where $\Phi$ is the mild solution given by Theorem \ref{nsem}.
\end{theorem}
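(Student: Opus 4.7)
The plan is to run the variation-of-parameters identity \eqref{formnon} through the $\mathbb{H}_1$-exponential decay of the linear semigroup $T(t)$ obtained in Theorem \ref{thm34n}(ii), and to close the estimate by a Gronwall argument in which the nonlinear interaction is absorbed by the \emph{smallness} of $\|\Phi(t)\|_{\mathbb{H}}$ guaranteed by Theorem \ref{nsem}. The whole point is that the $\mathbb{H}$-size of the trajectory, which by the invariance statement in Theorem \ref{nsem} remains bounded by some constant multiple of $\rho$ for all $t \geq 0$, provides the small prefactor that keeps the Gronwall constant strictly smaller than the linear exponent $\omega_1$.

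The key technical step is to establish a nonlinear bound of the form
$$\|\mathcal{F}(\Phi)\|_{\mathbb{H}_1} \leq C\,\varphi(\|\Phi\|_{\mathbb{H}})\,\|\Phi\|_{\mathbb{H}_1}, \qquad \varphi(s)\to 0~\text{as}~s\to 0^+.$$
Since $\mathcal{F}(\Phi)=(0,0,\tau^{-1}(u_t^2+u u_{tt}))^\top$ and the first two components vanish, the $\mathbb{H}_1$-norm collapses to the $L^2(\Omega)$-norm of the third slot. In dimension $d\leq 3$ the embedding $H^1(\Omega)\hookrightarrow L^4(\Omega)$ yields $\|u_t^2\|_2=\|u_t\|_4^2 \leq C\|u_t\|_{H^1}\|u_t\|_{H^1}\leq C\|\Phi\|_{\mathbb{H}}\|\Phi\|_{\mathbb{H}_1}$, while for the cross term I would use Gagliardo--Nirenberg to get $\|u\|_{L^\infty} \leq C\|u\|_{H^1}^{\theta}\|u\|_{H^2}^{1-\theta}$ for some $\theta\in(0,1)$ (admissible since $\mathbb{H}_1\hookrightarrow H^2(\Omega)\times H^1(\Omega)\times L^2(\Omega)$), combined with $\|u_{tt}\|_2 \leq \|\Phi\|_{\mathbb{H}}$, to obtain $\|u u_{tt}\|_2 \leq C\|\Phi\|_{\mathbb{H}}^{1+\theta}\|\Phi\|_{\mathbb{H}_1}^{1-\theta}$. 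The worst power of $\|\Phi\|_{\mathbb{H}_1}$ is linear (or subllinear), and in both cases the coefficient tends to zero as $\|\Phi\|_{\mathbb{H}}\to 0$, giving the claimed $\varphi$.

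Once this nonlinear estimate is in place, I would choose $\rho>0$ small enough (possibly smaller than the one in Theorem \ref{nsem}) so that $\sup_{t\geq 0}\|\Phi(t)\|_{\mathbb{H}} \leq C_1\rho$ and apply $\|T(t)\|_{\mathcal{L}(\mathbb{H}_1)}\leq M_1 e^{-\omega_1 t}$ to \eqref{formnon} to get
$$\|\Phi(t)\|_{\mathbb{H}_1} \leq M_1 e^{-\omega_1 t}\|\Phi_0\|_{\mathbb{H}_1} + M_1 C\,\varphi(C_1\rho)\int_0^t e^{-\omega_1(t-s)}\|\Phi(s)\|_{\mathbb{H}_1}\,ds.$$
Multiplying by $e^{\omega_1 t}$ and invoking Gronwall's lemma yields the conclusion
$$\|\Phi(t)\|_{\mathbb{H}_1}\leq M_1 \|\Phi_0\|_{\mathbb{H}_1}\,e^{-(\omega_1 - M_1 C\varphi(C_1\rho))t},$$
so shrinking $\rho$ until $\omega:=\omega_1-M_1 C\varphi(C_1\rho)>0$ produces the required decay with $M(\rho):=M_1$. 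The principal obstacle, and the step on which the whole architecture hinges, is exactly the nonlinear estimate: if the $u u_{tt}$ contribution cannot be made to split into a purely $\mathbb{H}$-smallness factor times an at-most-linear $\mathbb{H}_1$-power, one must instead work with the more general $C_1(\|\Phi\|_{\mathbb{H}})C_2(\|\Phi\|_{\mathbb{H}_1})$ bound advertised in the discussion and absorb the superlinear part of $C_2$ using the a priori $\mathbb{H}_1$-boundedness furnished by Theorem \ref{nsem}; the Gronwall argument then proceeds verbatim, with the same smallness mechanism in $\|\Phi\|_{\mathbb{H}}$ driving the final decay.
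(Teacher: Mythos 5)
Your proposal is correct and follows essentially the same route as the paper: the variation-of-parameters representation, the two-level nonlinear estimate $\|\cF(\Phi)\|_{\mathbb{H}_1}\lesssim \|\Phi\|_{\mathbb{H}}\|\Phi\|_{\mathbb{H}_1}$ (obtained exactly as in Lemma \ref{fest} via $H^1\hookrightarrow L^4$ and the $L^\infty$ interpolation with $\theta=1/2$), the $\mathbb{H}$-smallness of the trajectory from the invariant set of Theorem \ref{nsem}, and a Gronwall absorption that shrinks the linear rate $\omega_1$ by an amount proportional to $\rho$. The paper phrases the last step through the Gronwall--Beesack inequality, but for these kernels that is equivalent to your multiply-by-$e^{\omega_1 t}$-and-apply-standard-Gronwall argument, and yields the same decay rate $\omega_1-M_1C\beta/\tau$.
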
 

\begin{corollary}\label{corexp} With reference to Section 5, let $\beta_0$ be the biggest number such that the map $\Theta$ has a fixed point in $X^{\beta_0}$ which is, moreover, uniformly exponentially stable as in Theorem \ref{expnonl}. Let $\omega: (0,\beta_0] \to \BR_+$ be the function that maps each $\beta>0$ to the decay rate $\omega(\beta)$. Then there exists another function $\underline \omega: (0,\beta_0] \to \BR_+$ such that $\omega(\beta) \geqslant \underline\omega(\beta)$ for all feasible $\beta$ and \begin{equation}
	\label{limexp} \lim\limits_{\beta \to 0} \underline\omega(\beta) = \omega_1,
	\end{equation} where $\omega_1$ is the decay rate of the linear semigroup $T(t).$
\end{corollary}We notice that repeating the statement \textit{there exists $\rho >0$ such that if $\Phi_0 \in \mathbb{H}^\rho$} in the Theorem above is not redundant. In fact, one might need to require even smaller initial data to yield exponential decay. This again highlights the advantage of requiring smallness only in $\mathbb{H}$. For more details, see Section \ref{refext2}.
\fi

The rest of this paper is devoted to the proofs.

\section{Linear Semigroups -- Proof of Theorem \ref {gen}}

We notice that $\mathbb{H}$ has its topology induced by the inner product \begin{equation}
\label{inH} \left(\begin{bmatrix}\xi_1 \\ \xi_2 \\ \xi_3 \end{bmatrix},\begin{bmatrix}\varphi_1 \\ \varphi_2 \\ \varphi_3\end{bmatrix}\right)_{\mathbb{H}} = (\p \xi_1,\p \varphi_1) + \dfrac{b}{\tau}(\p\xi_2,\p \varphi_2) +
(\xi_3,\varphi_3),
\end{equation} for all $(\xi_1,\xi_2,\xi_3)^\top, (\varphi_1,\varphi_2,\varphi_3)^\top \in \mathbb{H}.$

We first show that $\cA: \D(\cA) \subset \mathbb{H} \to \mathbb{H}$ generates a $C_0$ semigroup on $\mathbb{H}.$ This part of the argument follows essentially \cite{bongarti_lasiecka_triggiani_AA_2021} with some  rather straigthforward modifications. We shall outline the main details -- as these are needed  for the proof of generation on the higher level of $\mathbb{H}_1 $ topology. 

 For notational convenience and future use, we introduce the following  change of variables $bz = bu_t + c^2 u$  which reduces the problem to a PDE--abstract ODE coupled system. The change from the coordinates $(u,u_t,u_{tt})$ to $(u,z,z_t)$ is described through the isomophism $M \in \calL(\mathbb{H})$
 given by (see \cite{marchand_triggiani_MMA_2012}) $$M = \begin{bmatrix}1 & 0 & 0 \\ \dfrac{c^2}{b} & 1 & 0 \\ 0 & \dfrac{c^2}{b} & 1 \end{bmatrix}.$$

The next lemma makes the above topological statement precise. \begin{lemma} \label{equiv_prob}  Assume that the compatibility conditions \begin{equation}\label{comp} \lambda{\partial_\nu} u_0 + \kappa_0 u_0 = 0 \ \emph{\mbox{on}} \ \Gamma_0, \qquad {\partial_\nu} u_0 + \kappa_1 u_1 = 0 \ \emph{\mbox{on}} \ \Gamma_1
	\end{equation} hold.
	Then $\Phi \in C^1(0,T;\mathbb{H})\cap C(0,T;\D(\cA))$ is a strong solution of\begin{equation} \begin{cases}
	\label{usistphi} \Phi_t = \mathcal{A}\Phi\\ 
	\Phi(0) = \Phi_0 \end{cases}
	\end{equation} if, and only if, $\Psi= M\Phi \in C^1(0,T;\mathbb{H})\cap C(0,T;\D(\mathbb{A}))$ is a strong solution for  \begin{equation} \begin{cases}
	\label{usistz} \Psi_t = \mathbb{A}\Psi\\ 
	\Psi(0) = \Psi_0 = M\Phi_0\end{cases}
	\end{equation} where $\mathbb{A} = M\cA M^{-1}$ with \begin{equation}\label{domz}
	\D(\mathbb{A}) = \left\{\begin{bmatrix} \xi_1 \\ \xi_2 \\ \xi_3 \end{bmatrix} \in \left[H^2(\Omega)\right]^2 \times \D \left(\p\ \right);
	\left[\lambda \partial_\nu \xi_2 + \kappa_0\xi_2\right]_{\Gamma_0} =0, \left[\partial_\nu \xi_2 + \kappa_1\xi_3\right]_{\Gamma_1} =0
	\right\}
	\end{equation}
\end{lemma}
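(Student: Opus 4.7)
The plan is to exploit the fact that $M$, being a lower triangular matrix with unit diagonal, is a bounded linear isomorphism of $\mathbb{H}$ onto itself, with bounded inverse $M^{-1}$ of the same triangular structure. Consequently, the map $\Phi \mapsto \Psi = M\Phi$ is a homeomorphism of $C([0,T];\mathbb{H})$ and of $C^1([0,T];\mathbb{H})$. By the defining identity $\mathbb{A} = M\cA M^{-1}$, one has the pointwise equality
\[
\Psi_t - \mathbb{A}\Psi = M\Phi_t - M\cA M^{-1} M \Phi = M(\Phi_t - \cA\Phi),
\]
which shows immediately that the two abstract evolution equations \eqref{usistphi} and \eqref{usistz} are equivalent as identities in $\mathbb{H}$, together with the matching initial condition $\Psi(0) = M\Phi_0$.

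The crucial remaining ingredient is the identification of the domains, i.e.\ $M(\D(\cA)) = \D(\mathbb{A})$, which reduces to a bookkeeping exercise on boundary traces. Given $\vec{\xi} \in \D(\cA)$ and writing $\vec{\eta} = M\vec{\xi}$, so that $\eta_1 = \xi_1$, $\eta_2 = \xi_2 + (c^2/b)\xi_1$, $\eta_3 = \xi_3 + (c^2/b)\xi_2$, regularity is inherited directly: $\eta_1, \eta_2 \in H^2(\Omega)$ and $\eta_3 \in H^1(\Omega) = \D(A^{1/2})$. The Robin and Neumann--Robin conditions listed in \eqref{domA} combine linearly: on $\Gamma_0$ one gets $\partial_\nu \eta_2 + \kappa_0 \eta_2 = (\partial_\nu \xi_2 + \kappa_0 \xi_2) + (c^2/b)(\partial_\nu \xi_1 + \kappa_0 \xi_1) = 0$, and on $\Gamma_1$ one gets $\partial_\nu \eta_2 + \kappa_1 \eta_3 = (\partial_\nu \xi_2 + \kappa_1 \xi_3) + (c^2/b)(\partial_\nu \xi_1 + \kappa_1 \xi_2) = 0$, which is exactly the collection of conditions in \eqref{domz} (with $\lambda = 1$ in the present paper). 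The reverse inclusion $\D(\mathbb{A}) \subseteq M(\D(\cA))$ follows by performing the same algebraic computation with $M^{-1}$ in place of $M$; since the subdiagonal pattern of $M^{-1}$ is fixed and explicit, this is purely mechanical.

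Finally, the compatibility hypothesis \eqref{comp} enters precisely at $t = 0$, to guarantee that $\Phi_0 \in \D(\cA)$, equivalently $\Psi_0 = M\Phi_0 \in \D(\mathbb{A})$, which is the natural prerequisite for talking about strong solutions. The main (and really only) technical nuance is the asymmetry that $\D(\mathbb{A})$ does not explicitly impose boundary conditions on $\eta_1 = u$, whereas $\D(\cA)$ does; I expect this to be the step requiring the most care. It is reconciled by observing that in the $(u,z,z_t)$ formulation the $u$--equation degenerates to a pure-in-time ODE ($bz = bu_t + c^2 u$), so that the Robin condition on $u$ is propagated in time from the compatibility assumption \eqref{comp} together with the Robin condition on $z = \eta_2$, rather than being built into the functional setting of $\mathbb{A}$. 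Once this is verified, combining the three steps yields the claimed two--sided equivalence of strong solutions.
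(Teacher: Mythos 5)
Your final paragraph is, in substance, exactly the paper's proof: the authors also reduce everything to checking the boundary conditions on the first component, set $\Upsilon(t):=(\lambda\partial_\nu u(t)+\kappa_0 u(t))\rvert_{\Gamma_0}$, observe that the Robin condition on $z=u_t+\tfrac{c^2}{b}u$ forces the scalar ODE $b\Upsilon_t+c^2\Upsilon=0$, and conclude $\Upsilon\equiv 0$ from the compatibility condition $\Upsilon(0)=0$, with the same argument on $\Gamma_1$. So the key idea is present and correct.

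However, your middle paragraph contains a genuine error that contradicts that final paragraph: the claimed identity $M(\D(\cA))=\D(\mathbb{A})$ is false, and the reverse inclusion is \emph{not} ``purely mechanical.'' Comparing \eqref{domA} with \eqref{domz}, the domain $\D(\mathbb{A})$ imposes no boundary conditions whatsoever on the first component $\eta_1$, whereas $\D(\cA)$ requires $\left[\partial_\nu\xi_1+\kappa_0\xi_1\right]_{\Gamma_0}=0$ and $\left[\partial_\nu\xi_1+\kappa_1\xi_2\right]_{\Gamma_1}=0$. Taking $\vec{\eta}\in\D(\mathbb{A})$ with $\eta_1\in H^2(\Omega)$ violating the Robin condition on $\Gamma_0$, one finds $M^{-1}\vec{\eta}\notin\D(\cA)$ (since $\xi_1=\eta_1$); hence $M(\D(\cA))$ is a \emph{proper} subspace of $\D(\mathbb{A})$. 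For the same reason, your statement that \eqref{comp} guarantees ``$\Phi_0\in\D(\cA)$, equivalently $\Psi_0=M\Phi_0\in\D(\mathbb{A})$'' is not right: the two memberships are not equivalent, and the compatibility conditions are precisely the extra data, beyond $\Psi_0\in\D(\mathbb{A})$, that the ODE argument needs as its initial condition. The fix is simply to delete the domain-equality claim and let the time-propagation argument of your last paragraph carry the direction $\Psi\Rightarrow\Phi$ (the direction $\Phi\Rightarrow\Psi$, i.e.\ $M(\D(\cA))\subseteq\D(\mathbb{A})$, is the trace bookkeeping you carried out and is fine).
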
 \begin{proof}
We only check the matching of the boundary conditions. Assume that $\Psi = (u,z,z_t) \in C^1(0,T;\mathbb{H}) \cap C(0,T;\D(\mathbb{A}))$ is a strong solution for \eqref{usistz}. Let 
$$\Upsilon(t) := \left(\lambda \partial_\nu u(t) + \kappa_0 u(t)\right)\rvert_{\Gamma_0}, \ t \geqslant 0$$ and notice that $b\Upsilon_t +c^2\Upsilon = 0$ for all $t$. This along with the compatibility condition \eqref{comp}$_1$ ($\Upsilon(0) = 0$) implies that $\Upsilon \equiv 0$. The same argument \emph{mutatis mutandis} recovers the boundary condition for $u$ on $\Gamma_1.$
\end{proof}

\

For convenience, we explicitly write a formula for the new operator $\mathbb{A} = M\cA M^{-1}.$ We have 
\begin{align}
\mathbb{A}\begin{bmatrix} \xi_1 \\[2mm] \xi_2 \\[2mm] \xi_3 \end{bmatrix} = 	\begingroup 
\setlength\arraycolsep{12pt} \begin{bmatrix} -\dfrac{c^2}{b}I & I & 0 \\[2mm] 0 & 0 & I \\[2mm] -\gamma \dfrac{c^4}{\tau b^2}I & \gamma \dfrac{c^2}{\tau b}I - \dfrac{b}{\tau}A & -\gamma \dfrac{1}{\tau}I - \dfrac{b}{\tau}AN(\kappa_1 N^\ast A)\end{bmatrix} 	\endgroup \begin{bmatrix} \xi_1 \\[2mm] \xi_2 \\[2mm] \xi_3 \end{bmatrix}
\end{align} 
where $\gamma = \alpha - \dfrac{\tau c^2}{b} \in L^\infty(\Omega).$

\medskip

We are ready to prove Theorem \ref{gen}. This  will be done by first showing that $\mathbb{A}$ generates a $C_0$--semigroup on $\mathbb{H}$, from which the semigroup generated by $\cA$ can be recovered via $M$. The semigroup on $\mathbb{H}_1$ will then be obtained by a restriction argument. The details are below. 

We write $\mathbb{A} = \mathbb{A}_d + P$ where $$P := \begin{bmatrix}0 & 1 & 0 \\ 0 & 0 & 0 \\ -\gamma \dfrac{c^4}{\tau b^2} & \gamma\dfrac{c^2}{\tau b} & 1-\gamma\dfrac{1}{\tau}\end{bmatrix} \in \calL(\mathbb{H})$$ is bounded in $\mathbb{H}$ and \begin{equation}\mathbb{A}_d\begin{bmatrix} \xi_1 \\[2mm] \xi_2 \\[2mm] \xi_3 \end{bmatrix} :=\begingroup 
	\setlength\arraycolsep{12pt}  \begin{bmatrix} -\dfrac{c^2}{b}I & 0 & 0 \\[2mm] 0 & 0 & I \\[2mm] 0 & - \dfrac{b}{\tau}A & -I - \dfrac{b}{\tau}AN(\kappa_1N^\ast A)\end{bmatrix}\endgroup \begin{bmatrix} \xi_1 \\[2mm] \xi_2 \\[2mm] \xi_3 \end{bmatrix},\end{equation} where $\D(\mathbb{A}_d) := \D(\mathbb{A}).$ It then suffices to prove generation of $\mathbb{A}_d$ on $\mathbb{H}$, see \cite[p. 76]{pazy_S_1992} and this will be done  by verifying the hypothesis of Lummer Philips Theorem: dissipativity and maximality.

For {\it dissipativity} we consider $(\xi_1,\xi_2,\xi_3)^\top \in \D(\mathbb{A})$ and compute via \eqref{inH}\begin{align*}
\left(\mathbb{A}_d\begin{bmatrix} \xi_1 \\[2mm] \xi_2 \\[2mm] \xi_3 \end{bmatrix},\begin{bmatrix} \xi_1 \\[2mm] \xi_2 \\[2mm] \xi_3 \end{bmatrix}\right)_{\mathbb{H}} &= - \dfrac{c^2}{b}\|\p \xi_1\|_{L^2(\Omega)}^2 +  \dfrac{b}{\tau}\left(\p \xi_3, \p\xi_2\right)  \\ & -\|\xi_3\|_{L^2(\Omega)}^2 - \dfrac{b}{\tau}(\p\xi_2,\p\xi_3)  -\dfrac{b}{\tau}\|\sqrt{\kappa_1}\xi_3\|_{L^2(\Gamma_1)}^2 \\ &= - \dfrac{c^2}{b}\|\p \xi_1\|_{L^2(\Omega)}^2 -\|\xi_3\|_{L^2(\Omega)}^2 -\dfrac{b}{\tau}\|\sqrt{\kappa_1}\xi_3\|_{L^2(\Gamma_1)}^2 \leqslant 0,
\end{align*} hence, $\mathbb{A}_d$ is dissipative in $\mathbb{H}$.

For {\it maximality} in $\mathbb{H}$, given any $L = (f,g,h) \in \mathbb{H}$ we need to find $\Psi = (\xi_1,\xi_2,\xi_3)^\top \in \D(\mathbb{A})$ such that $(s - \mathbb{A}_d)\Psi = L$, for some $s >0.$ This is equivalent to solving \begin{equation}\label{systm-max}
\begin{cases}
s \xi_1 + \dfrac{c^2}{b}\xi_1 = f, \\ s \xi_2 - \xi_3 = g, \\
s \xi_3 +\xi_3 + b A(\xi_2+ N(\kappa_1N^*A\xi_3))= h,
\end{cases}
\end{equation} which readily implies $$\xi_1 = \dfrac{b}{bs + c^2}f \in \D(\p).$$ Moreover, since $A^{-1} \in \mathcal{L}({L^2(\Omega)})$ a combination of the second and third equations above yields \begin{equation}
    \label{sol2} K_s \xi_3 = s A^{-1}h - bg
\end{equation} where $K_s: L^2(\Omega) \to L^2(\Omega)$ acts on an element $\xi \in L^2(\Omega)$ as $$K_s \xi = \left[(s^2 + s)A^{-1}+b(I+s N(\kappa_1N^\ast A)\right]\xi.$$ 

We now notice the restriction $K_s\rvert_{\D(\p)}$ is strictly positive. Indeed it follows by \eqref{neq} that, given $\xi \in \D(\p)$ we have 
\begin{align*}
    (K_s \xi, \xi)_{\D(\p)} &= (s^2 + s)\|\xi\|_2^2 + b\|\p\xi\|_2^2 + bs(A^{1/2}N(\kappa_1 N^\ast A)\xi,A^{1/2}\xi) \\ &= (s^2 + s)\|\xi\|_2^2 + b\|\p\xi\|_2^2 + bs\|\sqrt{\kappa_1}N^\ast A\xi\|_{\Gamma_1}^2 > 0.
\end{align*} 

Therefore
${K_s}_{\D(\p)}^{-1} \in \mathcal{L}(\D(\p))$ and since $s A^{-1}h - bg \in \D(\p)$ we have that $$\xi_3 := K_s^{-1}(s A^{-1}h - bg) \in \D(\p)$$ is the solution of \eqref{sol2}. Finally, $$\xi_2 = s^{-1}(\xi_3+g)  
\in \D(\p).$$

For the final step to conclude membership of $(\xi_1,\xi_2,\xi_3)$ in $\D(\mathbb{A})$ we look at the \textit{abstract} version of the description of $\D(\mathbb{A})$: \begin{equation*}\
	\D(\mathbb{A}) = \left\{(\xi_1,\xi_2,\xi_3)^\top \in \mathbb{H}; \ \xi_2+N(\kappa_1N^*A\xi_3) \in \D(A)
	\right\}
	\end{equation*} whereby one only needs to check that $\xi_2+N(\kappa_1N^*A\xi_3) \in \D(A)$ since the regularity for the triple $(\xi_1,\xi_2,\xi_3)^\top$ to belong to $\mathbb{H}$ was already established. The desired regularity will follow from \eqref{systm-max}, which implies \begin{align*}
bs (\xi_2+N(\kappa_1N^*A\xi_3)) &= - (s^2 + s)A^{-1}\xi_3 + s A^{-1} h \in \D(A),
\end{align*} since $\xi_3, h \in L^2(\Omega).$ 

This proves that $\mathbb{A}_d$ is maximal dissipative, therefore generates a $C_0$--semigroup of contractions due to Lummer Phillips Theorem and, since $P$ is bounded, $\mathbb{A} = \mathbb{A}_d + P$ generates a $C_0$--semigroup on $\mathbb{H}.$  

For generation in $\mathbb{H}_1$ we use an argument inspired by the one presented in (\cite{marchand_triggiani_MMA_2012}, p. 26) with the needed modifications. Since we already know that $\A$ generates a $C_0$ semigroup $\{S(t)\}_{t \geqslant 0}$ on a larger space $\mathbb{H}$, we only show that $$\{T(t)\}_{t \geqslant 0} := \{S(t)\rvert_{\mathbb{H}_1}\}_{t \geqslant 0}$$ is also a semigroup and that its infinitesimal generator is $\A$ when considered as an operator in $\mathbb{H}_1.$ 

This entails the proof of two things: $\{T(t)\}_{t \geqslant 0}$ satisfies the semigroup property -- which follows from the fact that the problem is autonomous -- and invariance: $T(t)(\mathbb{H}_1) \subset \mathbb{H}_1$ for all $t \geqslant 0$. 

If $\Phi_0=(u_0,u_1,u_2)^\top \in \mathbb{H}_1 $ then $\partial_\nu u_0 + k_1 u_1 =0 $ on $\Gamma_1$. We then need to show that this condition is invariant under the dynamics and, in addition, the regularity  $\Delta u \in C\left([0,T);L^2(\Omega)\right)$ holds true. This along with the boundary conditions and regularity of elliptic problems will prove that  $u\in C([0,T); H^2(\Omega))$.   In order to show that $\Delta u \in C\left([0,T);L^2(\Omega)\right)$, we appeal to the change of variables $bz_t = bu_t + c^2 u$. By the variation of parameters formula we have $$u(t) = e^{-\frac{c^2}{b} t} u_0 + \int_0^t e^{-\frac{c^2}{b}(t-\sigma)}z(\sigma)d\sigma$$ and since $\Delta u_0 \in L^2(\Omega)$ ($\Phi_0 \in \mathbb{H}_1$), it suffices to verify that $$\int_0^t e^{-\frac{c^2}{b}(t-\sigma)} \Delta z(\sigma)d\sigma \in L^2(\Omega), \ \forall t \geqslant 0.$$ To this end we  recall that $ (z,z_t) \in C([0,T); H^1(\Omega)\times L^2(\Omega)).$  Writing the linear  solution of  \eqref{modnl} (with $k = 0$) in the $z$--variable yields \begin{align}\label{ode}\int_0^t e^{-\frac{c^2}{b}(t-\sigma)} \Delta z(\sigma)d\sigma &= \dfrac{\tau}{b}\int_0^t e^{-\frac{c^2}{b}(t+\sigma)}\left[z_{tt}(\sigma)+\gamma u_{tt}(\sigma)\right]d\sigma \nonumber \\ &= \dfrac{\tau}{b} \left[z_t(t)+\gamma u_t(t) - e^{-\frac{c^2}{b}t}[z_t(0)+\gamma u_1] \right] \nonumber \\ &+ \dfrac{c^2}{b^2}\int_0^t e^{-\frac{c^2}{b}(t-\sigma)}[z_t(\sigma)+\gamma u_t(\sigma)]d\sigma \in L^2(\Omega),\end{align} as needed. Now, in order to show that the boundary conditions are also invariant under the dynamics, we write (for continuous $\calD(\cA)$--valued solutions):  \begin{align*}
	\partial_{\nu} u(t) + k_1 u_t(t) &= e^{-\frac{c^2}{b}t} \partial_{\nu} u_0 +\int_0^t  e^{-c^2b^{-1}( t-\sigma) } \partial_{\nu}  z(\sigma) d\sigma \\ &+\kappa_1 \left(- \dfrac{c^2} {b}  e^{-\frac{c^2}{b} t} u_0 + 
	e^{-\frac{c^2}{b}t} z(0) + \int_0^t  e^{-\frac{c^2}{b}( t-\sigma) }  z_t(\sigma) d\sigma \right) \\ &= e^{-\frac{c^2}{b}t} \partial_{\nu} u_0 +\int_0^t  e^{-c^2b^{-1}( t-\sigma) } \partial_{\nu}  z(\sigma) d\sigma \\ &+k_1 \left[- \dfrac{c^2} {b}  e^{-\frac{c^2}{b} t} u_0 + 
	e^{-\frac{c^2}{b}t} \left(u_1 + \dfrac{c^2}{b}u_0\right) + \int_0^t  e^{-\frac{c^2}{b}( t-\sigma) }  z_t(\sigma) d\sigma \right] \\ &= e^{-\frac{c^2}{b}t}\left[ \partial_{\nu} u_0 + \kappa_1u_1\right] +\int_0^t  e^{-c^2b^{-1}( t-\sigma) } \left[\partial_{\nu}  z(\sigma) + \kappa_1 z_t(\sigma)\right] d\sigma = 0,
\end{align*} where the conclusion follows from the fact that the initial conditions for $u$ satisfy the absorbing boundary conditions and the variable $z$ satisfies the absorbing boundary conditions along the trajectory. 
 This completes the proof of Theorem \ref{gen}.
	
\section{Exponential decays -- Proof of Theorem \ref{thm34n}} \label{secsta} In this section we work with the linearized version of \eqref{modnl} -- i.e., we take $k = 0$ -- in the $z$--variable. Moreover, since $\tau$ is fixed, we lose no generality by setting $\tau = 1$, therefore this is assumed for the rest of the paper. Recall that the change of variables $z = u_t + \frac{c^2}{b}u$ transforms \eqref{modnl2} into
\begin{equation}\label{zprob}
    z_{tt} + bA(z_t + N(\kappa_1N^*Az)) = -\gamma u_{tt} + f.
\end{equation} We assume smooth initial conditions and a $u$--independent forcing term $f \in C^1(\BR_+, L^2(\Omega))$. This ensures the existence and uniqueness of classical solutions which are, in addition, continuously dependent on the initial data. We can eventually extend the results that follow to semigroup solutions by the virtue of density. 

\

The energy for a solution $\Phi = (u,u_t,u_{tt}) \in \calD(\A)$ will be computed in two levels. We define the {\bf lower energy} functional $E(t) = E_0(t) + E_1(t)$ where \begin{align}\label{E1} E_1(t) & :=
	\dfrac{b}{2}\|\p{z}\|_2^2 +  \dfrac{1}{2}\|z_t\|_2^2 + \dfrac{c^2}{2b}\|\gamma^{1/2}u_t\|_2^2\end{align} \begin{align}\label{E0} E_0(t)& := \dfrac{1}{2}\|\alpha^{1/2}u_t\|_2^2 + \dfrac{c^2}{2}\|\p{u}\|_2^2\end{align} and the {\bf higher energy} functional $\E(t) = E(t) + E_2(t)$ where \begin{equation}\label{E2}E_2(t) = \dfrac{b}{2}\|\Delta u\|_2^2 
	\end{equation} where $\alpha$ and $\gamma$ may depend on $x.$
	We also note that $$\|A^{1/2} u\|^2_2 = \|\nabla u\|^2_2 + \|\sqrt{\kappa_0}u\|^2_{\Gamma_0} $$.
	By Poincare- Wirtenberg inequality we obtain that $$\|A^{1/2} u\|_2 \sim \|u\|_{H^1(\Omega) } $$
	
It is standard to see that $E(t) \sim \|\Phi(t)\|_{\mathbb{H}}^2$, see \cite{bongarti_lasiecka_DSOCIP_2021} for  details. The following lemma follows from classic elliptic theory and provides the estimate which is necessary for justifying our choice of \textit{higher} energy functional. As a remark, here and hereafter we use the notation $a \lesssim b$ to say that $a \leqslant Cb$ where $C$ is a constant possibly depending on the physical parameters of the model ($\tau,c,b > 0$) but independent of space, time and $\gamma \in L^\infty(\Omega).$\begin{lemma} Let $\Omega$ be a smooth domain and consider a function $u: \Omega \to \mathbb{R}$ such that $\Delta u \in L^2(\Omega)$ and $\partial_\nu u \rvert_{\partial \Omega} \in H^{1/2}(\Gamma)$, then $u \in H^2(\Omega)$ and $$\|u\|_{H^2(\Omega)} \lesssim \|\Delta u\|_2+ \|\partial_\nu u\|_{H^{1/2}(\Gamma)}.$$ \end{lemma}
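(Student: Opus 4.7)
The plan is to reduce to the classical $H^2$-regularity theory for the Neumann Laplacian by a standard lifting argument. First I would set $g := \partial_\nu u|_\Gamma \in H^{1/2}(\Gamma)$ and introduce an auxiliary lift $v \in H^2(\Omega)$ solving the uniquely solvable problem $-\Delta v + v = 0$ in $\Omega$ with $\partial_\nu v = g$ on $\Gamma$. Adding the zeroth-order term $+v$ is convenient because it bypasses the mean-value compatibility condition $\int_\Gamma g\,d\Gamma = 0$ and produces a bounded lifting map $H^{1/2}(\Gamma) \to H^2(\Omega)$ directly. By standard elliptic theory (Grisvard; Lions--Magenes) one has $\|v\|_{H^2(\Omega)} \lesssim \|g\|_{H^{1/2}(\Gamma)} = \|\partial_\nu u\|_{H^{1/2}(\Gamma)}$.

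Next, I would set $w := u - v$, so that $\Delta w = \Delta u - v \in L^2(\Omega)$ and $\partial_\nu w = 0$ on $\Gamma$. For such a homogeneous Neumann Poisson problem on a smooth domain, the classical shift theorem gives
\begin{equation*}
\|w\|_{H^2(\Omega)} \lesssim \|\Delta w\|_{L^2(\Omega)} + \|w\|_{L^2(\Omega)},
\end{equation*}
where the $L^2$-tail compensates for the one-dimensional kernel of the Neumann Laplacian (the constants). Applying the triangle inequality in $H^2$ and invoking the lift estimate from the first step then yields
\begin{equation*}
\|u\|_{H^2(\Omega)} \;\lesssim\; \|\Delta u\|_{L^2(\Omega)} + \|\partial_\nu u\|_{H^{1/2}(\Gamma)} + \|u\|_{L^2(\Omega)}.
\end{equation*}

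The only real technical point is disposing of the lower-order term $\|u\|_{L^2(\Omega)}$, which must be hidden for the clean estimate of the lemma to be valid (a nonzero constant provides an immediate counterexample to the raw inequality). In the functional framework of this paper this is free: whenever the lemma is invoked, $u$ plays the role of the first component $\xi_1$ of an element of $\mathbb{H}_1 \subset \mathbb{H}$, so $\|u\|_{L^2(\Omega)} \leqslant C\|u\|_{\mathcal{D}(A^{1/2})}$ is already controlled by the $\mathbb{H}$-part of $\|\vec{\xi}\|_{\mathbb{H}_1}$. Alternatively, since $\kappa_0 > 0$ on $\Gamma_0$ the Robin condition $\lambda \partial_\nu u + \kappa_0 u = 0$ on $\Gamma_0$ built into $\mathbb{H}_1$ rules out the constant mode, permitting a Poincaré--Wirtinger-type inequality to absorb $\|u\|_{L^2(\Omega)}$ into $\|\nabla u\|_{L^2(\Omega)}$ plus boundary terms already on the right-hand side. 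With this adjustment, the stated estimate follows, and $u \in H^2(\Omega)$ is immediate from finiteness of the right-hand side.
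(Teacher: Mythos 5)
Your proposal is correct, and it supplies precisely what the paper omits: the paper offers no proof at all, merely asserting that the lemma ``follows from classic elliptic theory.'' Your lifting argument (solve $-\Delta v+v=0$ with $\partial_\nu v=\partial_\nu u$ to get $\|v\|_{H^2(\Omega)}\lesssim\|\partial_\nu u\|_{H^{1/2}(\Gamma)}$, then apply the homogeneous-Neumann shift theorem to $w=u-v$) is the standard route and is sound; the only implicit assumption worth flagging is that $u\in H^1(\Omega)$ to begin with, which is needed even to make sense of $\partial_\nu u$ and which holds in every application here since $u$ is the first component of an element of $\mathbb{H}_1\subset\mathbb{H}$.

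More importantly, your observation that the estimate as literally stated is \emph{false} --- a nonzero constant gives $\Delta u=0$ and $\partial_\nu u=0$ but $\|u\|_{H^2(\Omega)}\neq 0$ --- is a genuine catch that the paper glosses over. The inequality one actually obtains from elliptic theory carries the extra term $\|u\|_{L^2(\Omega)}$ on the right, and your two proposed repairs are both consistent with how the lemma is used: either absorb $\|u\|_{L^2(\Omega)}$ into the $\mathbb{H}$-part of the $\mathbb{H}_1$-norm (which is exactly how the paper assembles $\|\vec\xi\|_{\mathbb{H}_1}^2=\|\vec\xi\|_{\mathbb{H}}^2+\|\Delta\xi_1\|_2^2+\cdots$), or use the Robin condition $\lambda\partial_\nu u+\kappa_0 u=0$ on $\Gamma_0$ with $\kappa_0>0$ to control $\|u\|_{L^2(\Gamma_0)}$ by $\|\partial_\nu u\|_{H^{1/2}(\Gamma)}$ and then invoke the Poincar\'e--Wirtinger equivalence $\|A^{1/2}u\|_2\sim\|u\|_{H^1(\Omega)}$ that the paper itself records after defining the energies. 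Either way the lemma holds in the form actually needed, but the statement should be read with that lower-order correction understood.
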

	\ifdefined\xxxxxxxx
	{\bf Marcelo-what is this nonsens? I am not sure what is in your head. You do not have Dirichlet data.} In particular, if $(u,u_t,u_{ttt})$ is a classical solution, then $u \in H_{\Gamma_1}^2(\Omega)$ and the inequality above along with the trace theorem\footnote{Trace Theorem: $\|f\|_{\Gamma}^2 \lesssim \|\p{f}\|_2^2.$} give \begin{equation}\label{E2L}\|u\|_{H_{\Gamma_1}^2(\Omega)}^2 \lesssim \|\Delta u\|_2^2 + \|u_t\|_{H_{\Gamma_1}^1(\Omega)}^2 \sim \|\Delta u\|_2^2 + \left\|\p u_t\right\|_{2}^2.\end{equation}
\end{lemma}
This  established topological  equivalence between $\E(t)$ and $\|\Phi(t)\|_{\mathbb{H}_1}^2$.

\begin{remark}
	Notice that the term $\|\p u_t\|_2^2$ in \eqref{E2L}  is not needed  in \eqref{E2} simply because $$\|\p u_t\|_2^2 = \left\|\p z - \dfrac{c^2}{b}\p u\right\| \lesssim E(t).$$ 
\end{remark}
\fi
\ifdefined\xxxx The next lemma guarantees that stability of solutions in $\mathbb{H}$ is equivalent to uniform exponential decay of the function $t \mapsto E(t).$

\begin{lemma}\label{equinorm}
	Let $\Phi = (u,u_t,u_{tt})$ be a weak solution for the $u$-- problem in $\mathbb{H}$ and assume that \eqref{comp} is in force. Then the following statements are equivalent: \begin{itemize}
		\item[\bf a)] $t \mapsto \|\Phi(t)\|_{\mathbb{H}}^2$ decays exponentially.
		
		\item[\bf b)] $t \mapsto \|M\Phi(t)\|_{\mathbb{H}}^2 = \|(u,z,z_t)\|_{\mathbb{H}}^2$ decays exponentially.
		
		\item[\bf c)] $t \mapsto E(t)$ decays exponentially.
	\end{itemize}
\end{lemma}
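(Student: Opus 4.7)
The plan is to show that the three functions $t\mapsto\|\Phi(t)\|_{\mathbb{H}}^2$, $t\mapsto\|M\Phi(t)\|_{\mathbb{H}}^2$, and $t\mapsto E(t)$ are pairwise equivalent \emph{uniformly in $t$}; once this is in hand, exponential decay of any one of them is inherited by the other two with the same rate and only a modified multiplicative constant. Thus the result reduces to two static inequalities on $\mathbb{H}$.

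\textbf{Step 1: (a) $\Leftrightarrow$ (b).} The change of variables $\Phi\mapsto M\Phi$ introduced before Lemma \ref{equiv_prob} is realized by an explicit lower-triangular matrix $M$ whose inverse
$$M^{-1} = \begin{bmatrix} 1 & 0 & 0 \\ -c^2/b & 1 & 0 \\ c^4/b^2 & -c^2/b & 1 \end{bmatrix}$$
is again bounded on $\mathbb{H}$ (every nonzero entry is multiplication by a constant). Hence $M\in\calL(\mathbb{H})$ is a topological isomorphism, and there exist $0<m\leq M_\star<\infty$ with
$$m\|\Phi\|_{\mathbb{H}} \;\leq\; \|M\Phi\|_{\mathbb{H}} \;\leq\; M_\star\|\Phi\|_{\mathbb{H}},\qquad \forall\,\Phi\in\mathbb{H}.$$
Specialized to $\Phi(t)$, this immediately yields (a) $\Leftrightarrow$ (b) with the identical decay exponent.

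\textbf{Step 2: (b) $\Leftrightarrow$ (c).} With $\tau=1$, direct expansion of the inner product \eqref{inH} gives
$$\|M\Phi(t)\|_{\mathbb{H}}^2 \;=\; \|\p u(t)\|_2^2 + b\,\|\p z(t)\|_2^2 + \|z_t(t)\|_2^2.$$
Comparing with $E(t)=E_0(t)+E_1(t)$ from \eqref{E1}--\eqref{E0}, the three ``principal'' terms of $E(t)$ reproduce the three terms of $\|M\Phi(t)\|_{\mathbb{H}}^2$ up to uniform positive constants $c^2/2,\,b/2,\,1/2$; in addition, $E(t)$ carries the two nonnegative contributions $\tfrac12\|\alpha^{1/2}u_t\|_2^2$ and $\tfrac{c^2}{2b}\|\gamma^{1/2}u_t\|_2^2$. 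Dropping the latter delivers the lower bound $E(t)\geq\min\{c^2/2,\,1/2\}\,\|M\Phi(t)\|_{\mathbb{H}}^2$. For the upper bound, I use $\alpha,\gamma\in L^\infty(\Omega)$ together with $u_t=z-(c^2/b)u$ and the Poincar\'e--Wirtinger equivalence $\|\p v\|_2\sim\|v\|_{H^1(\Omega)}$ (valid under $\kappa_0>0$, as noted in the paper) to estimate
$$\|\alpha^{1/2}u_t\|_2^2 + \|\gamma^{1/2}u_t\|_2^2 \;\lesssim\; \|u_t\|_2^2 \;\lesssim\; \|\p z\|_2^2 + \|\p u\|_2^2,$$
whence $E(t)\leq C\|M\Phi(t)\|_{\mathbb{H}}^2$. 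The two-sided bound is the claimed equivalence, giving (b) $\Leftrightarrow$ (c).

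\textbf{On the main difficulty.} There is no substantive technical obstacle: the lemma is in essence a bookkeeping statement asserting that $M$ is an explicit bounded isomorphism of $\mathbb{H}$ and that the possibly degenerate $\alpha$-- and $\gamma$--terms in $E(t)$ can be dominated from above by $L^\infty$ bounds and discarded from below by nonnegativity. The only mild care point is that the lemma is stated for weak solutions, while the energy identities are most transparent for classical $\D(\cA)$--valued trajectories. Since each of the three functionals $\Phi\mapsto\|\Phi\|_{\mathbb{H}}^2$, $\Phi\mapsto\|M\Phi\|_{\mathbb{H}}^2$, $\Phi\mapsto E(\Phi)$ is continuous on $\mathbb{H}$ and $\D(\cA)$ is dense, the equivalence extends by a standard approximation argument, guaranteed by generation of the $C_0$--semigroup in Theorem \ref{gen}.
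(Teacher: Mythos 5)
Your proof is correct and is exactly the standard two-step norm-equivalence argument ($M$ a bounded isomorphism of $\mathbb{H}$ with explicit bounded inverse, plus the two-sided comparison of $E(t)$ with $\|M\Phi(t)\|_{\mathbb{H}}^2$ using nonnegativity of the $\alpha$- and $\gamma$-terms from below and their $L^\infty$ bounds together with $u_t=z-\tfrac{c^2}{b}u$ and the Poincar\'e-type inequality from above) that the paper itself only sketches via the remark $E(t)\sim\|\Phi(t)\|_{\mathbb{H}}^2$ and a citation to an earlier reference. No gaps; the compatibility hypothesis \eqref{comp} plays no role in these static equivalences, as you implicitly (and correctly) observe.
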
 \begin{proof} See \cite{bongarti_lasiecka_DSOCIP_2021}.\end{proof}

\begin{remark}
The purpose of Lemma \eqref{equinorm} is that it allows us to use both the expression of the energy $E(t) = E_0(t) + E_1(t)$ and the norm of the solution $\|(u,z,z_t)\|_{\mathbb{H}}^2$ interchangeably. The specific structure of the energy contributes to a discovery of certain invariances and dissipative laws. 
However, from the topological point of view it is essential that 
 the following three quantities 
  $\|\nabla z\|_2,$
  $\|z_t\|_2$%
  \ and 
  $\|\nabla u\|_2$
  display appropriate decays. 
\end{remark
}\fi

\subsection{ Propagation of boundary dissipation -- Flow multipliers }
We begin with energy identity for $E_1.$  Since the problem is linear we work with smooth solutions (in the domain of the generator) which can be extended by density to  the phase space solutions. \begin{proposition}[\bf Energy Identity]
    \label{ene1id} Let $T>0$. If $\Psi = (u,z,z_t)$ is a weak  solution of \eqref{zprob} then 
	    \begin{equation}\label{e1id}
		    E_1(T) + \int_t^T D_\Psi(s)ds
		    = E_1(t) + \int_t^T\int_\Omega fz_t d\Omega ds
	    \end{equation} holds for $0\leq t\leq T$, where $D_\Psi$ represents the interior/boundary damping and is given by \begin{equation}
	        \label{damp} D_\Psi := b\int_{\Gamma_1} \kappa_1z_t^2d\Gamma_1
		    + \int_\Omega \gamma u_{tt}^2 d\Omega ds
	    \end{equation}
\end{proposition}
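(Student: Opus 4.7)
The natural plan is to derive the identity as a classical $z_t$--energy relation for smooth solutions and then extend by density, exactly as the preamble to the statement suggests. Starting from the PDE form of \eqref{zprob}, namely
$$z_{tt} - b\Delta z = -\gamma u_{tt} + f \quad \text{in } \Omega,$$
I would first verify that the boundary conditions $\partial_\nu u + \kappa_1 u_t = 0$ on $\Gamma_1$ and $\partial_\nu u + \kappa_0 u = 0$ on $\Gamma_0$ translate, via the definition $z = u_t + \tfrac{c^2}{b}u$, into $\partial_\nu z + \kappa_1 z_t = 0$ on $\Gamma_1$ and $\partial_\nu z + \kappa_0 z = 0$ on $\Gamma_0$. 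This matches the abstract term $bAN(\kappa_1 N^* Az)$ through the identity \eqref{neq}, so no extra boundary obstruction arises.

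Next I would multiply the equation by $z_t$ and integrate on $\Omega \times (t,T)$. The inertial term produces $\tfrac{1}{2}\|z_t\|_2^2\big|_t^T$, and integration by parts in the $-b\Delta z$ term gives
$$-b\iq \Delta z\, z_t = \frac{b}{2}\|\nabla z\|_2^2\Big|_t^T - b\int_t^T\!\!\int_\Gamma \partial_\nu z\, z_t\, d\Gamma\, ds.$$
On $\Gamma_1$, substituting $\partial_\nu z = -\kappa_1 z_t$ yields the dissipation $b\int_t^T\int_{\Gamma_1}\kappa_1 z_t^2$; on $\Gamma_0$, substituting $\partial_\nu z = -\kappa_0 z$ produces an exact time-derivative contribution $\tfrac{b}{2}\|\sqrt{\kappa_0}z\|_{\Gamma_0}^2\big|_t^T$, which combines with $\tfrac{b}{2}\|\nabla z\|_2^2\big|_t^T$ to reconstitute $\tfrac{b}{2}\|A^{1/2}z\|_2^2\big|_t^T$ by the definition of the $\calD(A^{1/2})$ norm recorded after \eqref{oplap}.

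For the right-hand side, I would decompose $z_t = u_{tt} + \tfrac{c^2}{b}u_t$ in the term $-\int\gamma u_{tt} z_t$. One piece gives $-\int_t^T\|\gamma^{1/2}u_{tt}\|_2^2\, ds$, contributing to the damping $D_\Psi$. The cross piece $-\tfrac{c^2}{b}\iq \gamma u_{tt} u_t$ is an exact derivative, integrating to $-\tfrac{c^2}{2b}\|\gamma^{1/2}u_t\|_2^2\big|_t^T$, which absorbs precisely into the last term of $E_1$ in \eqref{E1}. Assembling all pieces and rearranging yields \eqref{e1id}.

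The only subtle point is that the equation \eqref{zprob} is formulated in the abstract framework and the pointwise boundary manipulations above are \emph{a priori} only justified for $\calD(\cA)$-valued (classical) solutions. I would therefore carry out the calculation for smooth solutions (e.g. $\Phi_0\in \calD(\cA^2)$ with $f\in C^1(\mathbb{R}_+,L^2(\Omega))$), where both sides of \eqref{e1id} are continuous functionals of $\Phi$, $f$ in the $\mathbb{H}$ and $L^2$ topologies respectively, and then extend the identity to weak/semigroup solutions by density — exactly as announced in the remark immediately before the proposition. No serious obstacle is expected; the main care is the bookkeeping of the $\Gamma_0$ boundary term, which could easily be mistaken for a dissipative contribution instead of the conservative, $E_1$-building contribution it actually is.
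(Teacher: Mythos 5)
Your proposal is correct and follows essentially the same route as the paper: the paper computes $2\frac{d}{dt}E_1=\langle\mathbb{A}\Psi+G,\Psi\rangle$ for the continuous bilinear form \eqref{biform} (whose diagonal is $2E_1$) and integrates in time, which when unpacked via \eqref{neq} is exactly your concrete $z_t$--multiplier calculation, including the conservative $\Gamma_0$ contribution reconstituting $\|A^{1/2}z\|_2^2$ and the cross term $-\tfrac{c^2}{b}\int\gamma u_{tt}u_t$ absorbing into $E_1$. Both arguments work first on strong solutions and extend by density, so the approaches coincide up to notation.
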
 
 \begin{proof}The energy identity \eqref{e1id} is first derived  for strong solutions and then extended  by density  to weak solutions. 
         Consider the bilinear form $\langle\cdot,\cdot\rangle: \mathbb{H} \times \mathbb{H} \to \mathbb{R}$ be given by
\begin{equation}\begin{aligned}\label{biform}
    \left\langle \begin{bmatrix} \xi_1 \\ \xi_2 \\ \xi_3 \end{bmatrix}, \begin{bmatrix} \varphi_1 \\ \varphi_2 \\ \varphi_3\end{bmatrix}\right\rangle := 
    b\left(A^{1/2}\xi_2,A^{1/2}\varphi_2\right)
    + \left(\xi_3,\varphi_3\right)
    + \frac{c^2}{b}\left(\gamma\left(\xi_2-\frac{c^2}{b}\xi_1\right),\varphi_2-\frac{c^2}{b}\varphi_1\right)
\end{aligned}\end{equation}
which is continuous. Moreover, recalling that $\Psi=(u,z,z_t)$ it follows that $2E_1(t) = \langle\Psi(t),\Psi(t)\rangle$. Therefore, with  $G = (0,0,f)^\top$ we have, after   straightforward  calculations  (see \cite{bongarti_lasiecka_rodrigues_DCDS_2021} for details)
\begin{align*}
    2\dfrac{dE_1(t)}{dt} 
    &= \left\langle\dfrac{d\Psi(t)}{dt},\Psi(t)\right\rangle
    = \left\langle\mathbb{A}\Psi(t) + G,\Psi(t)\right\rangle \\
   &= -\int_\Omega \gamma u_{tt}^2d\Omega - b \int_{\Gamma_1}\kappa_1z_t^2d\Gamma_1 + \int_\Omega fz_td\Omega.
\end{align*} Identity \eqref{e1id} then follows by an integration in time on $[t,T].$
\end{proof}

In the next step we reconstruct the integral of the full energy on a truncated time interval  $(s,T-s)$ for $0 < s < T/2.$

\begin{proposition}\label{id}
	Let $T>0$. If $(u,z,z_t)$ is a classical solution of \eqref{usistz} then the inequality 
	    \begin{align}\label{e1in1}
	        \int_s^{T-s} E_1(t) dt 
	        &\lesssim [E_1(s)+E_1(T-s)] + 
	        \nonumber \\ &C_T\left[\int_0^T D_\Psi(s)ds
	        +\int_Q f^2 dQ + lot_\delta(z)\right].\end{align} holds for $0<s<T/2$. Here, $lot_\delta(z)$ is a collection of lower order terms satisfying $$lot_\delta(z)\leq C_\delta\sup_{t\in[0,T]}\left\{\|z(t)\|_{H^{1-\delta}(\Omega)}^2+\|z_t(t)\|_{H^{-\delta}(\Omega)}^2\right\},$$ for some $\delta>0$.
\end{proposition}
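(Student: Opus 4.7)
The plan is to recover the full lower-energy integral by combining two classical multipliers applied to the $z$-equation: the flow multiplier $h \cdot \nabla z$, where $h$ is the bent vector field provided by Assumption \ref{geo}, and the equipartition multiplier $z$ itself. Writing the equation schematically as $z_{tt} - b\Delta z = -\gamma u_{tt} + f$ (interpreted abstractly through $A$ and the lift $N$ in order to account for the non-homogeneous Neumann data on $\Gamma_1$), these are the natural candidates for recovering $\int\int |\nabla z|^2$ via the Jacobian bound $J(h) \geqslant \rho I$, and $\int\int z_t^2$ via the usual equipartition cancellation.

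First I would multiply by $h \cdot \nabla z$ and integrate on $\Omega \times (s,T-s)$. Integration by parts in time generates boundary contributions at $t=s$ and $t=T-s$ that are $\lesssim E_1(s) + E_1(T-s)$. The spatial integration by parts of $-b\Delta z$ against $h \cdot \nabla z$ produces the principal positive term $b\int\int J(h) |\nabla z|^2 \geqslant b\rho \int\int |\nabla z|^2$, up to divergence-type contributions that are either absorbed into the endpoint energies or into $lot_\delta(z)$. The $\Gamma_1$-boundary terms arising from $\partial_\nu z = -\kappa_1 z_t$ are dominated by $\int_0^T\!\int_{\Gamma_1} z_t^2\,d\Gamma_1\,dt \lesssim \int_0^T D_\Psi(s) ds$ after Young's inequality (with a tangential-derivative piece absorbed into the LHS via trace interpolation). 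On $\Gamma_0$, the condition $h \cdot \nu = 0$ cancels the standard $|\partial_\nu z|^2$ and $z_t^2$ boundary integrals; what survives is a tangential contribution which, via the Robin identity $\partial_\nu z = -\kappa_0 z$ and boundary integration by parts, reduces to $\|z\|_{H^{1/2+\varepsilon}(\Gamma_0)}^2 \lesssim \|z\|^2_{H^{1-\delta}(\Omega)}$, i.e.\ to $lot_\delta(z)$. The convexity half of Assumption \ref{geo} is precisely what ensures the correct sign (or absorbability) of these tangential terms, which no purely radial multiplier could handle.

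Second, the equipartition multiplier $z$ is applied to recover $\int\int z_t^2$ modulo $\int\int |\nabla z|^2$ and the same types of endpoint, boundary, and perturbation terms, each treated as before. The third ingredient of $E_1$, namely $\|\gamma^{1/2}u_t\|_2^2$, is bounded by $\|\gamma\|_\infty \|u_t\|_2^2$, and then by $\|\gamma\|_\infty(\|z\|_2^2 + \|u\|_2^2)$ via $u_t = z - (c^2/b)u$; the first-order ODE representation of $u$ in terms of $z$ used in the proof of Theorem \ref{gen} then reduces $\|u\|_2$-type contributions to $lot_\delta(z)$ plus an endpoint piece at $t=s$ that is absorbed by $E_1(s)$.

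The perturbations $-\gamma u_{tt}$ and $f$ enter through pairings against either multiplier: by Cauchy--Schwarz and Young, these are estimated by $\varepsilon\int\int(|\nabla z|^2 + z^2) + C_\varepsilon \|\gamma\|_\infty \int_0^T\!\int_\Omega \gamma u_{tt}^2 + C_\varepsilon \int_Q f^2$, with the $\varepsilon$-terms absorbed on the left and the $\gamma u_{tt}^2$-term absorbed by $\int_0^T D_\Psi$. Summing the two multiplier identities, absorbing, and invoking the trace/interpolation estimates produces \eqref{e1in1}. The \textbf{main obstacle} throughout is the $\Gamma_0$-analysis: the Lopatinski-defect there rules out the direct use of radial flow multipliers, and the tangential derivative components that necessarily appear in their place can only be absorbed into $lot_\delta(z)$ by exploiting the bent vector field provided by Assumption \ref{geo}, satisfying simultaneously $h\cdot\nu|_{\Gamma_0} = 0$ and the Jacobian coercivity $J(h) \geqslant \rho I$.
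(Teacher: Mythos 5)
Your overall strategy coincides with the paper's: the flow multiplier $h\cdot\nabla z$ with the bent field of Assumption \ref{geo}, the equipartition multiplier, the Jacobian coercivity \eqref{E77-8} to recover $\int\!\!\int|\nabla z|^2$, and the orthogonality $h\cdot\nu=0$ on $\Gamma_0$ to kill the untreatable boundary terms there. However, there is a genuine gap in your treatment of the boundary term on $\Gamma_1$. You claim that the tangential-derivative piece $\int_s^{T-s}\!\!\int_{\Gamma_1}|\partial_\tau z|^2(h\cdot\nu)\,d\Gamma\,dt$ (where $h\cdot\nu$ has no sign) can be ``absorbed into the LHS via trace interpolation.'' It cannot: finite energy controls only the $H^{1/2}(\Gamma)$ trace of $z$, so $\|\partial_\tau z\|^2_{L^2(\Sigma_1)}$ sits half a derivative above anything the interior gradient can dominate, even with an $\varepsilon$ in front. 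This is exactly the Lopatinski-type obstruction the paper emphasizes, and it is resolved not by interpolation but by the microlocal tangential trace estimate \eqref{tang1} (an adaptation of Lemma 2.1 of \cite{lasiecka_lebiedzik_NA_2002}), which bounds $\int\!\!\int_{\Gamma_1}|\partial_\tau z|^2$ by $\int\!\!\int_{\Gamma_1}(|\partial_\nu z|^2+z_t^2)$ --- hence by the damping --- plus $\|\gamma u_{tt}+f\|^2_{H^{-1/2+\delta}(Q)}$ and $lot_\delta(z)$. Without invoking such a pseudodifferential estimate your argument does not close on $\Gamma_1$.

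Two secondary points are also handled too loosely. First, the term $\tfrac12\int\!\!\int(z_t^2-b|\nabla z|^2)\,\mathrm{div}(h)$ produced by the flow multiplier is a principal-order quantity, not a ``divergence-type contribution absorbed into endpoint energies or $lot_\delta(z)$''; the paper cancels it by pairing $h\cdot\nabla z$ with the companion multiplier $\tfrac12 z\,\mathrm{div}(h)$ (i.e.\ by working with $M_h(z)=h\cdot\nabla z+\tfrac12 z\,\mathrm{div}(h)$), and the plain multiplier $z$ alone does not achieve this cancellation when $\mathrm{div}(h)$ is nonconstant. Second, on $\Gamma_0$ the surviving cross term is $\int\!\!\int_{\Gamma_0}\partial_\nu z\,\partial_\tau z\,(h\cdot\tau)$, which the paper estimates by the duality pairing $H^{\delta}(\Gamma_0)\times H^{-\delta}(\Gamma_0)$ (using $\lambda\partial_\nu z=-\kappa_0 z$ and continuity of $\partial_\tau:H^{3/2-\delta}(\Omega)\to H^{-\delta}(\Gamma_0)$), yielding $\varepsilon\int\|\nabla z\|_2^2+C_\varepsilon\,lot_\delta(z)$ --- note the $\varepsilon$-gradient piece that must still be reabsorbed on the left; your boundary integration by parts would require differentiating $\kappa_0\,(h\cdot\tau)$ along $\Gamma_0$, which is not available for $\kappa_0\in L^\infty(\Gamma_0)$ only.
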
 \begin{proof} We study stability properties of both $S(t)$ and $T(t)$ assuming the general degenerated case for $\gamma$, i.e., $\gamma \in L^\infty(\Omega)$ and $\gamma(x) \geqslant 0$ a.e. in $\Omega.$ This includes a  completely degenerate (critical) case when $\gamma =0 $ and the uncontrolled dynamics is unstable.  With  a feedback boundary control, we will be able to show that the semigroups can be stabilized -- but under additional  geometric conditions  which however  are stronger than  the ones typically assumed in a boundary stabilization theory of hyperbolic dynamics. It is clear that  if $\Gamma_0 = \emptyset$ then the entire boundary $\Gamma$ is dissipated and therefore stability results would hold true without  any additional geometric restrictions. 
 Assuming $\Gamma_0 \neq \emptyset,$ and $\Gamma_0$ is star-shaped (standard condition),  classical stability methods (multipliers) do not work due to conflicting signs of  radial vector fields on the boundary $\Gamma_1$ when acting on tangential derivatives.   This fact has been recognized already in  \cite{lasiecka_triggiani_zhang_CM_2000}.
 However,due to  convexity of $\Gamma_0$  and Assumption \ref{geo}  one obtains the following construction \cite{lasiecka_triggiani_zhang_CM_2000}: there exists a vector field $h(x) = [h_1(x), \cdots, h_d(x)] \in C^2 (\overline{\Omega})$ such that 
\begin{subequations}\label{assgeo}
		\begin{equation}\label{assc1}
		    h \cdot \nu = 0 \ \mbox{on} \ \Gamma_0 \end{equation}
	with $\nu$ being the  unit outward normal, and that for some constant $\delta > 0$ and all vector $u(x) \in [L^2(\Omega)]^n$, we have \begin{equation}
				\int_\Omega J(h)|u(x)|^2 d\Omega \geqslant \delta  \int_\Omega |u(x)|^2d\Omega,  \label{E77-8}
			\end{equation} where $J(h)$ represents the Jacobian matrix of $h$.
\end{subequations}\begin{remark}
We note  that the more general typical star--shaped  condition $h \cdot \nu \leqslant 0$ on $\Gamma_0$  is not sufficient. This is due to the presence of tangential  derivatives on uncontrolled part of the boundary which can not be ``absorbed''  via dissipation by  the microlocal argument \cite{lasiecka_triggiani_AMO_1992,tataru_ASNSPCS_1998,lasiecka_tataru_DIE_1993}. 


This all leads to the following local estimate $$\|\partial_{\tau} u\|_{\Sigma_0} \leq C \|u_t\|_{\Sigma_0 } +C \|\partial_{\nu} u\|_{\Sigma_0} +  lot_{Q} $$ valid on solutions. Above, $lot_Q$ mean lower order terms on $Q=\Omega \times [0,T].$   By ``bending'' on the boundary $\Gamma_0$  the  radial vector field allows to eliminate its contribution of tangential derivatives. See Remark \ref{rmkgeo}. 
\end{remark}
\begin{remark}\label{other} Convexity of $\Gamma_0$ is only one sufficient condition. Several examples where the construction in (\ref{assgeo}) holds for  other types of domains are given in \cite{lasiecka_triggiani_zhang_CM_2000}.

As mentioned earlier, since we have dissipation  only on a portion of the boundary, while the remaining part is subject to Neuman/Robin type of boundary conditions,, standard multiplier theory with radial vector fields  does no apply- due to sign inconsistency in the estimates of tangential  boundary integrals. The introduction of the vector field $h \in C^2$ in \eqref{assgeo} is then  critical for the results in this paper. 
\end{remark}

We start with   energetic calculations performed first on regular (strong) solutions.  Let's multiply equation \eqref{zprob} by $h\cdot\nabla{z}$ integrate by parts in $(s,T-s)\times\Omega$, for $s \in [0,T/2).$ This gives
\begin{align}
	 &\dfrac{1}{2}\int_s^{T-s}\!\!\!\!\int_{\Gamma} \left(z_t^2 - b|\nabla z|^2\right)(h\cdot\nu) d\Gamma dt + b\int_s^{T-s}\!\!\!\!\int_{\Gamma} \partial_\nu z(h\cdot\nabla z) d\Gamma dt \nonumber \\ &= \int_s^{T-s}\!\!\!\!\int_\Omega \gamma u_{tt} (h\cdot\nabla z)d\Omega dt + \int_\Omega z_t(h\cdot\nabla z)d\Omega\biggr\rvert_s^{T-s}  \nonumber \\ &+ \dfrac{b}{2}\int_s^{T-s}\!\!\!\!\int_\Omega J(h)|\nabla z|^2d\Omega dt + \dfrac{1}{2}\int_s^{T-s}\!\!\!\!\int_\Omega \left(z_t^2 - b|\nabla z|^2\right)\dvv{h}d\Omega dt \nonumber \\ 
	& - \int_s^{T-s}\!\!\!\!\int_\Omega f(h\cdot\nabla z)d\Omega dt, \label{hgradz}
\end{align} where $J(h)$ is the Jacobian. Now notice that  equipartition  of kinetic and potential energy appears in  the above identity via the term \begin{equation}\label{aux1}\int_\Omega \left(z_t^2 - b|\nabla z|^2\right)\dvv{h}d\Omega,\end{equation} . We next  multiply \eqref{zprob} by $z\dvv{h}$ and again integrate by parts to obtain the following identity \begin{align}
	&\dfrac{b}{2} \int_s^{T-s}\!\!\!\!\int_\Gamma \partial_\nu z z \mbox{div}(h)d\Gamma dt       = \dfrac{1}{2}\int_s^{T-s}\!\!\!\!\int_\Omega \gamma u_{tt} z \mbox{div}(h) d\Omega dt \nonumber \\ &+ \dfrac{1}{2} \int_\Omega z_tz \mbox{div}(h)d\Omega\biggr\rvert_s^{T-s} + \dfrac{1}{2}\int_s^{T-s}\!\!\!\!\int_\Omega \left(b|\nabla z| - z_t^2\right)\mbox{div}(h)d\Omega dt \nonumber \\ &- \dfrac{b}{2} \int_s^{T-s}\!\!\!\!\int_\Omega z \nabla z \cdot \nabla(\mbox{div}(h))d\Omega dt - \dfrac{1}{2}\int_s^{T-s}\!\!\!\!\int_\Omega fz\dvv{h}d\Omega dt \label{zdivh}
\end{align} where we have, as in \eqref{hgradz}, kept the boundary terms on the left--hand--side.
  Adding \eqref{hgradz} and \eqref{zdivh} we obtain \begin{align*}
	 &\dfrac{1}{2}\int_s^{T-s}\!\!\!\!\int_{\Gamma} \left(z_t^2 - b|\nabla z|^2\right)(h\cdot\nu) d\Gamma dt + b\int_s^{T-s}\!\!\!\!\int_{\Gamma} \partial_\nu z(h\cdot\nabla z) d\Gamma dt + \dfrac{b}{2} \int_s^{T-s}\!\!\!\!\int_\Gamma \partial_\nu z z \mbox{div}(h)d\Gamma dt      \nonumber \\ &= \int_s^{T-s}\!\!\!\!\int_\Omega \gamma u_{tt} (h\cdot\nabla z)d\Omega dt +\dfrac{1}{2}\int_s^{T-s}\!\!\!\!\int_\Omega \gamma u_{tt} z \mbox{div}(h) d\Omega dt+ \dfrac{b}{2}\int_s^{T-s}\!\!\!\!\int_\Omega J(h)|\nabla z|^2d\Omega dt \nonumber \\ & + \int_\Omega z_t(h\cdot\nabla z)d\Omega\biggr\rvert_s^{T-s} + \dfrac{1}{2} \int_\Omega z_tz \mbox{div}(h)d\Omega\biggr\rvert_s^{T-s}  - \dfrac{b}{2} \int_s^{T-s}\!\!\!\!\int_\Omega z \nabla z \cdot \nabla(\mbox{div}(h))d\Omega dt \nonumber \\ &- \int_s^{T-s}\!\!\!\!\int_\Omega f(h\cdot\nabla z)d\Omega dt - \dfrac{1}{2}\int_s^{T-s}\!\!\!\!\int_\Omega fz\dvv{h}d\Omega dt,
\end{align*} and then the boundary terms can be written more compactly in terms of the interior terms as \begin{align}
	 \int_s^{T-s}B(\Gamma)(t)dt &= \int_s^{T-s}\!\!\!\!\int_\Omega \left(\gamma u_{tt} - f\right) M_h(z)d\Omega dt + \int_\Omega z_tM_h(z)d\Omega\biggr\rvert_s^{T-s} \nonumber \\ &+ \dfrac{b}{2}\int_s^{T-s}\!\!\!\!\int_\Omega J(h)|\nabla z|^2d\Omega dt - \dfrac{b}{2} \int_s^{T-s}\!\!\!\!\int_\Omega z \nabla z \cdot \nabla(\mbox{div}(h))d\Omega dt. \label{hmult1}
\end{align} by defining $M_h(z):= h\cdot \nabla z + \frac{1}{2}z\dvv{h}$ and \begin{equation}
    \label{bddt} B(\Gamma) := \dfrac{1}{2}\int_{\Gamma} \left(z_t^2 - b|\nabla z|^2\right)(h\cdot\nu) d\Gamma + b\int_{\Gamma} \partial_\nu zM_h(z) d\Gamma.  
\end{equation}

Now, the second part of  geometrical condition \eqref{assgeo} allow us to obtain an estimate for the potential $z$--energy. To see this, first notice that $M_h(z)$ is controlled by the potential energy, indeed, \begin{equation}\label{mhest}\|M_h(z)|_2| \leqslant \sup\limits_{x \in \overline{\Omega}} (|h(x)|+\dvv{h}(x)|)\left(\|\nabla z\|_2 + \frac{1}{2}\|z\|_2\right) \lesssim \|z\|_{H^1(\Omega) } \lesssim  \|A^{1/2}  z\|_2 ,\end{equation} due to Robin boundary condition imposed on $\Gamma_0$ which allows to control $L_2$ norms  by the gradient.  Moreover, the last term in \eqref{hmult1} can be estimated as \begin{equation}\label{est1-2}
    \left|\int_s^{T-s}\!\!\!\!\int_{\Omega} z \nabla z \nabla(\mbox{div}(h)) d\Omega dt \right|
    \lesssim \varepsilon \int_s^{T-s}\!\!\!\!\int_{\Omega}|\nabla{z}|^2 d\Omega dt + C_\varepsilon \|z\|_{L^2(s,T-s;L^2(\Omega))}^2,
\end{equation} for any $\varepsilon > 0$, due to Peter--Paul's Inequality and boundedness of $D^2h$ in $\overline \Omega.$ Similarly, for any $\varepsilon > 0$ \begin{equation*}
    \left|\int_s^{T-s}\!\!\!\!\int_{\Omega} \gamma u_{tt} A_h(z) d\Omega dt\right|
    \lesssim \varepsilon\int_s^{T-s}\!\!\!\!\int_{\Omega}|\nabla{z}|^2 d\Omega dt
    + C_\varepsilon\int_s^{T-s}\!\!\!\!\int_{\Omega}\gamma|u_{tt}|^2d \Omega dt
\end{equation*} 
and finally \begin{align}
    \label{aux2} \int_\Omega z_tM_h(z)d\Omega\biggr\rvert_s^{T-s} &\lesssim E_1(s) + E_1(T-s). 
\end{align} Therefore, combining \eqref{mhest}, \eqref{est1-2}, \eqref{aux2} and the second part of assumption \eqref{assgeo} we obtain, for $\varepsilon > 0$ sufficiently small, \begin{align}\label{potz} \int_s^{T-s}\!\!\!\!\int_{\Omega}|\nabla{z}|^2 d\Omega dt
    &\lesssim E_1(s)+E_1(T-s) 
    + \int_0^T \!\!\!\!D_\Psi(s)ds \nonumber \\ &+ \int_s^{T-s}\!\!\!\!B(\Gamma)(t)dt + \int_Qf^2dQ
    + \|z\|_{L^2(s,T-s;L^2(\Omega))}^2.
\end{align}

Now, given the definition of $E_1$, we need an estimate for the kinetic energy in order to continue. For that, we multiply \eqref{zprob} by $z$ and again integrate by parts over $(s,T-s)\times\Omega$ to obtain 
\begin{align}
	&\int_s^{T-s}\!\!\!\!\int_{\Omega} \left[b|\nabla{z}|^2 - z_t^2\right]d\Omega dt + b\int_s^{T-s}\!\!\!\!\int_{\Gamma} z\partial_\nu d\Gamma dt \nonumber = \\ 
	& - \int_s^{T-s}\!\!\!\!\int_{\Omega} \gamma u_{tt}z d\Omega dt - \int_\Omega z_tzd\Omega \biggr\rvert_s^{T-s} + \int_s^{T-s}\!\!\!\!\int_{\Omega} fz d\Omega dt. \label{zmul} 
\end{align}
Identity \eqref{zmul} implies the following upper estimate for the kinetic energy
\begin{equation}\begin{aligned}\label{kinz}
    \int_s^{T-s}\!\!\!\!\int_{\Omega}|z_t|^2 d\Omega dt 
    &\lesssim [E_1(s)+E_1(T-s)] + \int_s^{T-s}\!\!\!\!\int_{\Omega}|\nabla{z}|^2 d\Omega dt + \int_s^{T-s}\!\!\!\!\int_{\Gamma}z\partial_\nu z d\Gamma dt
      \\
   & +\int_{0}^T D_\Psi(s)ds
    + \int_Q f^2 dQ
    + \|z\|_{L^2(s,T-s;L^2(\Omega))}^2
\end{aligned}\end{equation} and then
combining \eqref{potz} and \eqref{kinz} [adding and subtracting the boundary term  defining  $\|A^{1/2} z\|^2 $ from the gradient],  and noticing that the term $\|\gamma^{1/2}u_t\|$ in $E_1(t)$ can be obtained by the first two, we conclude
\begin{align}\label{e1est}
    \int_s^{T-s} E_1(t) dt
    &\lesssim E_1(s)+E_1(T-s) 
    + \int_0^T \!\!\!\!D_\Psi(s)ds \nonumber \\ &+ \int_s^{T-s}\!\!\!\!\tilde B(\Gamma)(t)dt + \int_Qf^2dQ
    + \|z\|_{L^2(s,T-s;L^2(\Omega))}^2.
\end{align}
where 
\begin{equation}
    \label{bddt2} \tilde B(\Gamma) := \dfrac{1}{2}\int_{\Gamma} \left(z_t^2 - b|\nabla z|^2\right)(h\cdot\nu) d\Gamma + b\int_{\Gamma} \partial_\nu zM_h(z) d\Gamma + \int_{\Gamma}z\partial_\nu z d\Gamma  + \kappa_0 \int_{\Gamma_0} |z|^2 d \Gamma_0
\end{equation}

We notice that for all the computations carried out so far we only needed the second part of our geometric assumption, that is, it would work with any $C^2$--vector field such that its Jacobian is strictly positive. Only now, in the analysis of the boundary term $\tilde B(\Gamma)$ will we use assumption \eqref{assc1} to prove the following lemma.

\begin{lemma}[\bf Key Lemmma]\label{bter}
   The boundary term $\tilde B(\Gamma)$ satisfies the following estimate \begin{align}
   \label{BEST} \int_s^{T-s} \tilde B(\Gamma)(t)dt &\lesssim E_1(s) + \varepsilon \int_s^{T-s}\|\nabla z\|_2^2dt \nonumber \\ &+ C_T\int_0^T D_\Psi(s)ds + C_T\|f\|_{H^{-1/2+\delta}(Q)}^2 + C_T lot_\delta(z),
   \end{align} where $lot_\delta(z)$ has the properties stated in Proposition \eqref{id}.
\end{lemma}\begin{proof}
        We need to estimate all the terms in \eqref{bddt2}. We immediately notice that  the first boundary term  contains  $|z_t|^2-|\nabla_{\tau} z|^2 $ evaluated on the boundary. However, on $\Gamma_0$ we have no information on either  tangential  derivative -- which provides  contribution unbounded with respect to the energy level. And it is at this point where we will be using orthogonality of the constructed vector field $h$ with respect to normal  to the boundary direction. The details are given below.  
        
         We start with the last term  in (\ref{bddt2}) . Notice that \begin{align}
           \int_s^{T-s} \int_\Gamma z\partial_\nu z d\Gamma dt &= \int_s^{T-s} \int_{\Gamma_0} z\left(-\dfrac{\kappa_0(x)}{\lambda} z\right) d\Gamma_0 dt + \int_s^{T-s} \int_{\Gamma_1} z\left(-\kappa_1(x) z_t\right) d\Gamma_1 dt \nonumber \\ & =  -\dfrac{1}{\lambda}\int_s^{T-s}\int_{\Gamma_0}\kappa_0 z^2d\Gamma_0 - \dfrac{1}{2}\int_{\Gamma_1}\kappa_1(x)z^2(T-s) d\Gamma_1 + \dfrac{1}{2}\int_{\Gamma_1}\kappa_1(x)z^2(s) d\Gamma_1  \nonumber \\& \leqslant \dfrac{1}{2}\int_{\Gamma_1}\kappa_1(x)z^2(s) d\Gamma_1 \lesssim \|A^{1/2} z(s)\|_2^2 \lesssim E_1(s), \label{aux7}
        \end{align} due to trace inequality. Next, we notice that \begin{align}
            \int_s^{T-s}\int_\Gamma \left(z_t^2 - b|\nabla z|^2\right)(h\cdot\nu) d\Gamma dt &= \int_s^{T-s}\left(\int_{\Gamma_0} + \int_{\Gamma_1}\right)\left(z_t^2 - b|\nabla z|^2\right)(h\cdot\nu) d\Gamma dt \nonumber \\& \lesssim \int_0^T\!\!\!\!D_\Psi(s)ds -b \int_s^{T-s} \int_{\Gamma_1}|\nabla z|^2(h \cdot \nu)d\Gamma dt, \label{aux4}
        \end{align} due to $h \cdot \nu = 0$ on $\Gamma_0$ and the definition of the damping term \eqref{damp}. \begin{remark}\label{rmkgeo}
        	Notice that assuming  only $h \cdot \nu \leqslant 0$,  would allow to dispense with the term $$\int_s^{T-s} \int_{\Gamma_0} z_t^2(h \cdot \nu) \leqslant 0.$$
	However, the  gradient  term  $$\int_s^{T-s} \int_{\Gamma_0} |\nabla z|^2(h \cdot \nu) =  \int_s^{T-s} \int_{\Gamma_0} (|\partial_\nu z|^2 + |\partial_\tau z|^2)(h \cdot \nu)$$ where $\partial_\tau$ indicates derivative in the tangential direction, poses difficulties.   Boundary condition on $\Gamma_0$ provide good estimate 
	for  the first part. However, for the second no estimate is available unless  $z_t|_{\Gamma_0} $  is under control, which is given through the dissipation 
	or $h \cdot \nu = 0$.
        \end{remark} 
    Back to \eqref{aux4},  for $\Gamma_1$ a tangential--trace estimate is available. In fact, using an adaptation of Lemma 2.1 in \cite{lasiecka_lebiedzik_NA_2002}, which was obtained via microlocal analysis of the homogeneous case, and accounting for our non--homogeneity $-\gamma u_{tt} + f$ in \eqref{zprob} we obtain
    \begin{align}\label{tang1}
    \int_s^{T-s} \int_{\Gamma_1}|\partial_\tau{z}|^2d\Gamma_1 dt
    &\leqslant C_T \int_0^T \int_{\Gamma_1}(|\partial_\nu{z}|^2 + z_t^2)d\Gamma_1 dt \nonumber \\ &+C_T\left[\|\gamma u_{tt} + f\|_{H^{-1/2+\delta}(Q)}^2 + lot_\delta(z) \right] \nonumber \\ &\lesssim C_T\int_0^T D_\Psi(s)ds + C_T\|f\|_{H^{-1/2+\delta}(Q)}^2 + C_T lot_\delta(z),
    \end{align} with $lot_\delta(z)$ complying with the condition stated in Proposition \ref{id}. With this we then improve estimate \eqref{aux4} as follows  \begin{align}
    \int_s^{T-s}\int_\Gamma \left(z_t^2 - b|\nabla z|^2\right)(h\cdot\nu) d\Gamma dt & \lesssim \int_0^T\!\!\!\!D_\Psi(s)ds + \int_s^{T-s} \int_{\Gamma_1}|\partial_\tau z|^2d\Gamma dt \nonumber \\ &\lesssim C_T\int_0^T D_\Psi(s)ds + C_T\|f\|_{H^{-1/2+\delta}(Q)}^2 + C_T lot_\delta(z), \label{aux5}
    \end{align} due to \eqref{tang1}. Finally, we tackle the more involved term. We notice that, \begin{align}
  \int_s^{T-s}\int_\Gamma \partial_\nu z M_h(z)d\Gamma dt &= \int_s^{T-s} \int_\Gamma \partial_\nu z \left(h\cdot \nabla z + \dfrac{1}{2} z\dvv{h}\right)d\Gamma dt \label{aux9} \\ &\lesssim \int_s^{T-s} \int_\Gamma \partial_\nu z \left(h\cdot \nabla z \right)d\Gamma dt + E_1(s), \label{aux8}
    \end{align} where we have used the fact that the second integral in \eqref{aux9} is exactly the one in \eqref{aux7} up to an uniformly bounded term. For the first integral in \eqref{aux9}, we use the identity $$\partial_\nu (h \cdot \nabla z) = |\partial_\nu z|^2(h\cdot \nu) + \partial_\nu z \partial_\tau z (h \cdot \tau)$$ which is obtained by writing the coordinates of the vector $\nabla z$ in the basis $\{\tau, \nu\}$. This allows us to write, recalling the damping terms \eqref{damp} and the tangential trace inequality \eqref{tang1}: \begin{align}
    \label{aux10} \int_s^{T-s} \!\!\!\!\int_\Gamma \partial_\nu z \left(h\cdot \nabla z \right)d\Gamma dt &\lesssim  \int_s^{T-s} \!\!\!\! \int_{\Gamma_0} \partial_\nu z \partial_\tau z (h \cdot \tau) \nonumber \\ &+ C_T\left[\int_0^T \!\!\!\!D_\Psi(s)ds + \|f\|_{H^{-1/2+\delta}(Q)}^2 + lot_\delta(z)\right]
    \end{align} and we now invoke Sobolev Embedding's Theory. Recall that $$\lambda\partial_\nu z = -\kappa_0 z \in H^{3/2}(\Gamma_0) \hookrightarrow H^{\delta_1}(\Gamma_0)$$ ($\delta_1 \leqslant 3/2$) since $z \in H^2(\Omega)$ if $\Psi$ is a classic solution. On the other hand, 
     $$\partial_\tau z \in H^{1/2}(\Gamma_0) \hookrightarrow H^{\delta_2}(\Gamma_0)$$ ($\delta_2 \leqslant 1/2$). Taking any $\delta_1 = \delta \in (0, 1/2]$ and $\delta_2 = - \delta_1 \in [-1/2,0)$ we have, by duality pairing along with continuity of the operator $\partial_\tau\rvert_{\Gamma_0}: H^{3/2 - \delta}(\Omega) \to H^{-\delta}(\Gamma_0)$ \begin{align}
    \int_s^{T-s}\int_{\Gamma_0}\partial_\nu{z}\partial_{\tau}{z}(h\cdot\tau) d\Gamma_0 dt
    &\lesssim \int_s^{T-s}\left[C_\varepsilon\|\partial_\nu{z}\|_{H^{\delta}(\Gamma_0)}^2 + \varepsilon\|\partial_\tau{z}\|_{H^{-\delta}(\Gamma_0)}^2 \right]dt \nonumber \\ &\lesssim  \int_s^{T-s}\left[C_\varepsilon\|z\|_{H^{\delta}(\Gamma_0)}^2 + \varepsilon\|z\|_{H^{3/2-\delta}(\Omega)}^2 \right]dt \nonumber \\ 
    &\lesssim C_\varepsilon \int_s^{T-s} \|z\|_{H^{1/2+\delta}(\Omega)}^2dt + \varepsilon \int_s^{T-s} \|\nabla z\|_{L^2(\Omega)}^2dt \nonumber \\ 
    &\lesssim \varepsilon \int_s^{T-s} \|\nabla z\|_{L^2(\Omega)}^2dt + C_{T,\epsilon}\|z\|_{H^{1/2+\delta}(\Omega)}^2 \label{aux11}, 
    \end{align}  finishing  the proof of Lemma \ref{bter}.
\end{proof}Finally, Lemma \ref{bter}  yields Proposition \ref{id} after taking $\varepsilon$ small enough. 
\end{proof}

Our next result aims at improving Lemma \ref{id} by absorbing $lot_\delta(z)$ by the damping. For the linear problem the compactness uniqueness argument used for achieving it  is 
stated below.
\begin{proposition}\label{compuniq}
For $T>0$ there exists a constant $ C_T > 0 $ such that  the following inequality holds:
\begin{equation}
    lot_\delta(z)
    \leq C_T\int_0^T D_{\Psi}(s)ds 
       \end{equation}
\end{proposition}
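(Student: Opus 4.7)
The strategy is the classical compactness--uniqueness argument. Suppose, by contradiction, that the conclusion fails. Then there exists a sequence of (classical) solutions $\{\Psi_n\} = \{(u_n,z_n,z_{n,t})\}$ of the homogeneous problem ($f=0$) on $[0,T]$ such that
\begin{equation*}
lot_\delta(z_n) \equiv \sup_{t\in[0,T]}\left\{\|z_n(t)\|_{H^{1-\delta}(\Omega)}^2+\|z_{n,t}(t)\|_{H^{-\delta}(\Omega)}^2\right\} = 1, \qquad \int_0^T D_{\Psi_n}(s)\,ds \to 0.
\end{equation*}
Combining the energy identity (Proposition \ref{ene1id}) with the reconstruction estimate (Proposition \ref{id}), taking $\varepsilon$ small and using Gronwall yields a uniform bound of $\{\Psi_n\}$ in $C([0,T];\mathbb{H})$. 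Hence, up to a subsequence, $\Psi_n \rightharpoonup \Psi = (u,z,z_t)$ weakly-$*$ in $L^\infty(0,T;\mathbb{H})$.

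The plan is then to upgrade weak convergence to strong convergence in the topology of $lot_\delta$. The uniform bound on $\Psi_n$ in $\mathbb{H}$, together with the equation \eqref{zprob} which controls $z_{n,tt}$ in a negative Sobolev norm, gives (via Aubin--Lions) that $z_n \to z$ strongly in $C([0,T];H^{1-\delta}(\Omega))$ and $z_{n,t} \to z_t$ strongly in $C([0,T];H^{-\delta}(\Omega))$ for some $\delta>0$. Consequently $lot_\delta(z) = 1$, so in particular $\Psi \not\equiv 0$. Lower semicontinuity of the boundary/interior damping functional forces
\begin{equation*}
\int_0^T\!\!\int_{\Gamma_1}\kappa_1 z_t^2\,d\Gamma_1\,dt = 0, \qquad \int_0^T\!\!\int_{\Omega}\gamma u_{tt}^2\,d\Omega\,dt = 0,
\end{equation*}
so the limit $\Psi$ solves the linear problem with the overdetermined boundary conditions $z_t \equiv 0$ and (using the Robin condition on $\Gamma_1$) $\partial_\nu z \equiv 0$ on $\Gamma_1$.

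The main obstacle, and the heart of the argument, is to conclude $\Psi \equiv 0$ from this overdetermined data, since the interior term $\int_\Omega \gamma u_{tt}^2$ provides no information on the (possibly large) set $\{\gamma = 0\}$. Translating back to the $u$-variable via $bz = b u_t + c^2 u$, the conditions $z_t = \partial_\nu z = 0$ on $\Gamma_1$, combined with the absorbing Robin boundary condition $\partial_\nu u + \kappa_1 u_t = 0$ on $\Gamma_1$, yield $u_t|_{\Gamma_1} = u_{tt}|_{\Gamma_1} = \partial_\nu u|_{\Gamma_1} = 0$. Thus $u$ satisfies a hyperbolic-like third-order equation with zero Cauchy data on $(0,T)\times\Gamma_1$ and the Robin condition on $\Gamma_0$. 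For $T$ sufficiently large compared with the geometry (which is compatible with the star-shaped/convex assumption on $\Gamma_0$), a Holmgren/Carleman-type unique continuation result -- applied to the principal wave part $z_{tt} - b\Delta z$ of \eqref{zprob} with the lower-order perturbation $-\gamma u_{tt}$ absorbed as a bounded coefficient term -- forces $z \equiv 0$ on $(0,T)\times \Omega$, and then the ODE $bz = b u_t + c^2 u$ with zero Cauchy data on $\Gamma_1$ forces $u \equiv 0$. This contradicts $lot_\delta(z)=1$, completing the proof. The parameter $T$ in the statement is then chosen as any time exceeding the unique continuation threshold.
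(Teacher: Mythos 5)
Your proposal follows essentially the same route as the paper's proof: a compactness--uniqueness contradiction argument with normalized lower-order terms and vanishing damping, Aubin--Lions/Simon compactness to upgrade to strong convergence, vanishing of the limit damping to reduce to the pure wave equation $\zeta_{tt}=b\Delta\zeta$ with overdetermined Cauchy data on $\Sigma_1$, and unique continuation plus the ODE link $bz=bu_t+c^2u$ to force the limit to vanish. The only refinement the paper adds is that since $z_t=0$ on $\Gamma_1$ only makes the Dirichlet trace time-independent (not zero), one must first apply Holmgren to $v=\zeta_t$ (which has genuinely zero Cauchy data on $\Sigma_1$) and then dispose of the resulting stationary elliptic problem via the Robin condition on $\Gamma_0$ --- a small but necessary extra step in your ``forces $z\equiv 0$'' conclusion.
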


\begin{proof}
As pointed out in the statement of Proposition \eqref{id}, we have $$lot_\delta(z)\leq C_\delta\sup_{t\in[0,T]}\left\{\|z\|_{H^{1-\delta}(\Omega)}^2+\|z_t\|_{H^{-\delta}(\Omega)}^2\right\}$$ for $\delta \in (0,1/2)$. Then we prove Proposition \eqref{compuniq} as a corollary of the following Lemma \begin{lemma}
    \label{compunilem} There exists a constant $C_T$ such that \begin{equation}
        \label{cu1} \|(z,z_t)\|_{L^2(0,T; H^{1-\delta}(\Omega) \times H^{-\delta}(\Omega))}^2 \leqslant C_T\int_0^T D_{\Psi}(s)ds
    \end{equation}
     \end{lemma}
     \ifdefined\xxxxx
     As pointed out in \eqref{e1rec}, we have $lot_\delta(z)\leq C_\delta\sup_{t\in[0,T]}\left\{\|z\|_{H^{1-\delta}(\Omega)}^2+\|z_t\|_{H^{-\delta}(\Omega)}^2\right\}$, for $\delta \in (0,1/2)$. Then we prove Proposition \eqref{compuniq} as a corollary of the following Lemma \begin{lemma}
			\label{compunilem} There exists a constant $C_T$ such that \begin{equation}
			\label{cu1} \|(z,z_t)\|_{L^2(0,T; H^{1-\delta}(\Omega) \times H^{-\delta}(\Omega))}^2 \leqslant C_T\left[ b\int_{\Sigma_1} \kappa_1|z_t|^2 d\Sigma_1 
			+ \int_Q \gamma|u_{tt}|^2 dQ \right]
			\end{equation}
		\end{lemma}
		\fi
		\begin{proof}
	The proof is based on compactness-uniqueness argument.  Compactness follows from  compactness of Sobolev's embeddings  implicated in the definition of  of lower order terms  with respect to  the finite energy space for variables $(z,z_t) $ which are $H^1(\Omega) \times L_2(\Omega) $. 
	Uniqueness, instead follows  from the overdetermination of the 	wawe equation with overdetermined Neuman-Dirichlet  data on the boundary $\Gamma_1$. 	Using the notation of \cite{simon_AMOA_1986}, let $X = H^1(\Omega)$, $B = H^{1-\delta}(\Omega)$ and $Y = H^{-\delta}(\Omega).$ Then it follows from \cite[Theorem 16.1]{lions_magenes_SV_1972} that the injection of $X$ in $B$ is compact. Moreover, since $\delta \in (0,1/2)$, \cite[Theorem 12.4]{lions_magenes_SV_1972} allows us to write $$Y = H^{-\delta}(\Omega) = [L^2(\Omega),H^{-1}(\Omega)]_\delta,$$ and then the injection of $B$ in $Y$ is continuous (even dense). Introduce the space $\Lambda$ as $$\Lambda \equiv \{v \in L^2(0,T;X); \dot{v} \in L^2(0,T; Y)\}$$ equipped with the norm $$\|v\|_{\Lambda} = \|v\|_{L^2(0,T;X)} + \|\dot{v}\|_{L^2(0,T;Y)}.$$ Then it follows from \cite{simon_AMOA_1986} that the injection of $W$ into $L^2(0,T;B)$ is compact. We are then ready for proving \eqref{cu1}

			By contradiction, suppose that there exists a sequence of initial data $\{u_{0n},u_{1n},u_{2n}\}$ with corresponding $E_1^n(0)$ energy uniformly (in $n$) bounded generating  a sequence $\{u_n, \dot{u}_n, \ddot{u}_n\}$ of solutions of problem \eqref{usist} 
			with related 
			sequence $\left\{z_n = \dfrac{c^2}{b} u_n + \dot{u}_n, \dot{z}_n = \dfrac{c^2}{b}\dot{u}_n + \ddot{u}_n\right\}$ solutions of problem \ref{usistz} such that \begin{subnumcases}{}
			\|z_n\|_{L^2(0,T; H^{1 - \delta}(\Omega))}^2 +  \|\dot{z}_n\|_{L^2(0,T; H^{- \delta}(\Omega))}^2 \equiv 1 \label{E10-3}	 \\[2mm]
			\dfrac{c^2}{b} \int_0^T \int_\Omega \gamma(\ddot{u}_n)^2 dQ + \int_0^T \int_{\Gamma_1}\kappa_1 (\dot{z}_n)^2d\Sigma_1 \to 0, \ \mbox{as} \ n \to +\infty. \label{E10-4}
			\end{subnumcases}
			
			From idenity \eqref{e1id} (with $f = 0$) we see that the uniform boundedness $E_1^n(0)$ implies uniform boundedness of $E_1^n(t)$, $ t \in [0,T]$. Therefore, one might choose a (non--relabeled) subsequence satisfying\begin{subequations}\begin{align}
				z_{n} \to \ \mbox{some } \zeta, \ \mbox{weak}^\ast \ \mbox{in } L^\infty(0,T;H^1(\Omega)) 
				\label{E10-72a} \\ \dot{z}_{n} \to \ \mbox{some } \zeta_1, \ \mbox{weak}^\ast \ \mbox{in } L^\infty(0,T; L^2(\Omega)) \hookrightarrow L^2(0,T; H^{ - \delta}(\Omega)); \label{E10-72b} \\ 
				\gamma^{1/2}\dot{u}_{n} \to \ \mbox{some } \eta, \ \mbox{weak}^\ast \ \mbox{in } L^\infty(0,T;L^2(\Omega)); \label{E10-72c} 
				\end{align}\end{subequations}  It easily follows from distributional calculus that $\dot{\zeta} = \zeta_1$ and, in the limit, the functions $\zeta$ and $\eta$ satisfy the equation \begin{subnumcases}{}
			\ddot{\zeta} = b\Delta \zeta -\gamma^{1/2} \dot{\eta} \hspace{7cm} \mbox{in} \ Q  \label{E10-9a} \\[2mm]
			\gamma^{1/2}\dot{\zeta} = \dfrac{c^2}{b}\eta + \dot{\eta} \label{E10-9aa} \\[2mm]
			\left[\dfrac{\partial \zeta}{\partial \nu}+ \kappa_1\dot{\zeta}\right]\biggr\rvert_{\Sigma_1} = 0 ; \qquad \left[\dfrac{\partial \zeta}{\partial \nu}+ \kappa_0\zeta\right]\biggr\rvert_{\Sigma_0} = 0. \label{E10-9c} 
			\end{subnumcases} plus respective initial data.
			
			It follows from the weak convergence that there exist $M$ independent of $n$ such that \begin{equation}
			\|(z_n, \dot{z}_n)\|_{L^\infty(0,T; H^{1}(\Omega) \times H^{-\delta}(\Omega))} = \|z_n\|_\Lambda \leqslant \ M,\label{E10-17}
			\end{equation} for all $n$. Then, by \emph{compactness} (of $\Lambda$ in $L^2(0,T; H^{1-\delta}(\Omega))$ there exists a subsequence, still indexed by $n$, such that \begin{equation}
			z_n \to \zeta \ \mbox{strongly in } L^2(0,T; H^{1 - \delta}(\Omega)). \label{E10-21}
			\end{equation} 
			
			Next we show that $\eta$ and $\zeta$  are zero elements. 
			\ifdefined\xxxx
			\begin{equation} \dfrac{c^2}{b} \int_0^T \int_\Omega \dot{\eta}^2 dQ +  \int_0^T\int_{\Gamma_1} \kappa_1(\dot{\zeta})^2  d\Sigma_1 = 0.\label{E10-2311}
			\end{equation} To this end, recall from \eqref{E10-4} that \begin{equation}
			\dfrac{c^2}{b} \int_0^T \int_\Omega \gamma(\ddot{u}_{n})^2 dQ +  \int_0^T\int_{\Gamma_1} (\dot{z}_n)^2  d\Sigma_1 \to 0, \qquad n \to \infty,\label{E10-231}
			\end{equation} 
			\fi
			Indeed, from (\ref{E10-4}) 
			we obtain that  
			$\gamma^{1/2}\ddot{u}_n \to 0$ in $L^2(0,T;L^2(\Omega) )$ and
			$\dot{z}_n\rvert_{\Gamma_1}  \to 0 $ in $ L^2(0,T;L^2(\Gamma_1)).$ This implies that
			$\dot{\eta} =0 $ and $\dot{\zeta}|_{\Gamma_1} =0$. 
			Indeed, the last claim follows from 
			${\gamma^{1/2}} \ddot{u}_n \rightarrow \dot{\eta}$ in $H^{-1} (0,T;L^2(\Omega) $
			where by the uniqueness of the limit one must have $\dot{\eta} \equiv 0$ .
			Similar argument applies to infer $\dot{\zeta}|_{\Gamma_1} =0.$
			
			Next, passing to the limit as $n \to \infty$  yields the following over determined (on $\Gamma_1$) problem:
			\begin{subnumcases}{}
			\ddot{\zeta} = b\Delta \zeta  \hspace{7cm} \mbox{in} \ Q  \label{E10-9a2} \\[2mm]
			\gamma^{1/2}\dot{\zeta} = \dfrac{c^2}{b}\eta  \label{E10-9aa2} \\[2mm]
			\left[\dfrac{\partial \zeta}{\partial \nu}\right]\biggr\rvert_{\Sigma_1} = 0 ; \qquad \left[\dfrac{\partial \zeta}{\partial \nu}+ \kappa_0\zeta\right]\biggr\rvert_{\Sigma_0} = 0; \qquad \dot{\zeta}_t\rvert_{\Gamma_1} =0 \label{E10-9c2}
			\end{subnumcases} plus respective initial data.
			\ifdefined\xxxxx
			
			Recall that $\gamma^{1/2}\dot{u}_n \to \eta$ weak--star in $L^\infty(0,T;L^2(\Omega))$ and that $z_n \to \zeta$ strongly in $L^2(0,T; H^{1 - \delta}(\Omega))$ by \eqref{E10-21}, the last implying (from continuity of the trace operator) that $z_n\rvert_{\Gamma_1} \to \zeta\rvert_{\Gamma_1}$ strongly in $L^2(0,T; L^2(\Gamma_1))$. Moreover, it follows from \eqref{E10-231} that, at least for $n$ large, $\{\gamma^{1/2}\dot{u}_n,\dot{z}_n\rvert_{\Gamma_1}\}$ belongs to a fixed finite ball in $ L^2(0,T;L^2(\Omega)\times L^2(\Gamma_1))$ for all such $n.$
			
			Then, by maybe restricting to a further subsequence we have \begin{equation}
			\label{nv925} \gamma^{1/2}\ddot{u}_n \to \dot{\eta} \ \mbox{weakly in} \ L^2(0,T; L^2(\Omega)) \end{equation} \begin{equation}
			\label{nv926} \dot{z}_n\rvert_{\Gamma_1} \to \dot{\zeta}\rvert_{\Gamma_1} \ \mbox{weakly in} \ L^2(0,T; L^2(\Gamma_1)) \end{equation}
			In fact, if $\varphi \in \calD(0,T; \Omega)$ we have by definition of distributional derivative: \begin{equation}\label{nv927}
			\int \gamma^{1/2}\ddot{u}_n \varphi dQ = -\int \gamma^{1/2}\dot{u}_n \dot{\varphi}dQ \to - \int \eta \dot{\varphi}dQ = \int \dot{\eta}\varphi dQ
			\end{equation} recalling $\eta_n \to \eta$ weak--star in $L^\infty(0,T; L^2(\Omega))$ and the uniqueness of the limits. Similarly, if $\varphi \in \calD(0,T; \Gamma_1)$ then \begin{equation}\label{nv927}
			\int \dot{z}_n\rvert_{\Gamma_1} \varphi d\Sigma_1 = -\int z_n \rvert_{\Gamma_1} \dot{\varphi}d\Sigma_1 \to - \int \eta\rvert_{\Gamma_1} \dot{\varphi}d\Sigma_1 = \int \dot{\eta}\varphi d\Sigma_1
			\end{equation}recalling that $z_n\rvert_{\Gamma_1} \to \zeta\rvert_{\Gamma_1}$ strongly in $L^2(0,T; L^2(\Gamma_1)).$ This, we invoke the weak convergence in \eqref{nv925} and \eqref{nv926} along with the weak lower semicontinuity of norms to conclude that
			
			\begin{align}
			0  \leqslant \dfrac{c^2}{b} \int_0^T \int_\Omega \dot{\eta}^2 dQ &+  \int_0^T\int_{\Gamma_1} (\dot{\zeta})^2  d\Sigma_1 = \int_0^T \left[\dfrac{c^2}{b}\|\dot{\eta}\|_{L^2(\Omega)}^2 + \|\dot{\zeta}\|_{L^2(\Gamma_1)}^2\right]dt \nonumber \\ & \leqslant \liminf\limits_{n \to \infty}\int_0^T \left[\dfrac{c^2}{b}\|\gamma^{1/2}\ddot{u}_n\|_{L^2(\Omega)}^2 + \|\dot{z}_n\|_{L^2(\Gamma_1)}^2\right]dt = 0. \label{nv929}
			\end{align} 
			
			which establishes \eqref{E10-2311}.
			Next, notice that \eqref{E10-2311} along with \eqref{E10-9a}--\eqref{E10-9c} imply \begin{equation}
			\begin{cases}
			\dot{\eta} \equiv 0 \ \mbox{in} \ (0,T] \times \Omega \\
			\dot{\zeta} \equiv 0 \ \mbox{in} \ \Sigma_1
			\end{cases} \Longrightarrow \begin{cases}
			\ddot{\zeta} = b\Delta \zeta \ \mbox{in} \ Q \\
			\dfrac{\partial \zeta}{\partial \nu}\biggr\rvert_{\Sigma_1} \equiv 0, \ \zeta \big |_{\Sigma_0} \equiv 0.
			\end{cases} \label{E10-24i}
			\end{equation}
			\fi
			The overdetermined $\zeta$--problem implies  in particular with $v\equiv \zeta_t $
			$$\ddot{v} = b \Delta v $$ with the overdetermined boundary conditions 	$$\dfrac{\partial v}{\partial  \nu}\biggr\rvert_{\Gamma_1} = 0; \qquad  v\rvert_{\Gamma_1} = 0$$
			which yields overdetermination of boundary  data on $\Gamma_1$ for the wave operator.
			Here we have used  contradiction assumption  \eqref{E10-4}. 
			This gives  $v \equiv 0 $, hence $\zeta_t \equiv  0$ and $\zeta_{tt} =0 $   distributionally .
			Using this information in (\ref{E10-9a}) yields
			$$\Delta \zeta =0; \qquad  \dfrac{\partial \zeta}{\partial \nu}\biggr\rvert_{\Gamma_1} =0;\qquad
			\left[\dfrac{\partial \zeta}{\partial \nu} + \kappa_0 \zeta\right]_{\Gamma_0}  =0.$$
			Standard elliptic estimate, along with $\kappa_0 > 0$  gives $\zeta \equiv 0 $ in  $Q.$
			
			Finally, weak$^\ast$ convergence of $\dot z_n$ in $L^\infty(0,T; L^2(\Omega))$ and the compacity of $L^2(\Omega)$ into $H^{-\delta}(\Omega)$ (see \cite[Theorem 16.1 with $s = 0$ and $\varepsilon = \delta$]{lions_magenes_SV_1972}) we have $\dot{z}_n(t) \to \dot{\zeta}(t)$ strongly in $H^{-\delta}(\Omega)$ for a.e. $t \in [0,T]$ and this allow us to compute (due to Lebesgue dominated convergence theorem): \begin{align*}\lim\limits_{n \to \infty} \|\dot z_n\|_{L^2(0,T; H^{-\delta}(\Omega)}^2 &= \lim\limits_{n \to \infty} \int_0^T\|\dot z_n(t)\|_{H^{-\delta}(\Omega)}^2dt =  \int_0^T\lim\limits_{n \to \infty}\|\dot z_n(t)\|_{H^{-\delta}(\Omega)}^2dt = \|\dot{\zeta}\|_{L^2(0,T; H^{-\delta}(\Omega)}^2 = 0\end{align*} since $\dot{\zeta} \equiv 0$ in $Q.$ 
			Then, passing with the limit as $n \to \infty$ in \eqref{E10-3} we have $$0 = \|\zeta\|_{L^2(0,T;H^{1-\delta}(\Omega))} = 1,$$ which is a contradiction. The Lemma is proved.
		\end{proof}
		Lemma \ref{compunilem}  implies in a straightforward way the result of the  proposition \ref{compuniq}.
	\end{proof}
	
\ifdefined\xxxxxxxx
\begin{proof}
  Using the notation of \cite{simon_AMOA_1986}, let $X = H^1(\Omega)$, $B = H^{1-\delta}(\Omega)$ and $Y = H^{-\delta}(\Omega).$ Then it follows from \cite[Theorem 16.1]{lions_non-homogeneous_1972} that the injection of $X$ in $B$ is compact. Moreover, since $\delta \in (0,1/2)$, \cite[Theorem 12.4]{lions_non-homogeneous_1972} allows us to write $$Y = H^{-\delta}(\Omega) = [L^2(\Omega),H^{-1}(\Omega)]_\delta,$$ and then the injection of $B$ in $Y$ is continuous (even dense). Introduce the space $\Lambda$ as $$\Lambda \equiv \{v \in L^2(0,T;X); \dot{v} \in L^2(0,T; Y)\}$$ equipped with the norm $$\|v\|_{\Lambda} = \|v\|_{L^2(0,T;X)} + \|\dot{v}\|_{L^2(0,T;Y)}.$$ Then it follows from \cite{simon1986compact} that the injection of $W$ into $L^2(0,T;B)$ is compact. We are then ready for proving \eqref{cu1}

 By contradiction, suppose that there exists a sequence of initial data $\{u_{0n},u_{1n},u_{2n}\}$ with corresponding $E_1^n(0)$ energy uniformly (in $n$) bounded generating  a sequence $\{u_n, \dot{u}_n, \ddot{u}_n\}$ of solutions of problem \eqref{usist} 
	\dddot{y}_n + \alpha \ddot{y}_n - c^2 \Delta y_n - b\Delta \dot{y}_n = 0 \hspace{3.62cm} \mbox{in} \ Q\label{E10-1a} \\[2mm]
	y_n\big|_{t=0}  = y_{0n}; \quad \dot{y}_n\big|_{t=0}  = y_{1n}; \quad \ddot{y}_n\big|_{t=0}  = y_{2n} \hspace{1.47cm} \mbox{in} \ \Omega    \label{E10-1b}   \\[2mm]
	\left[\dfrac{\partial y_n}{\partial \nu}+ \dot{y}_n\right]\biggr\rvert_{\Sigma_1} = 0 \ \mbox{in} \ \Sigma_1 = (0,T] \times \Gamma_1; \qquad y_n\big|_{\Sigma_0} = 0   \ \mbox{in} \ \Sigma_0 = (0,T] \times \Gamma_0 \label{E10-1c} 
\end{subnumcases} 
with related 
sequence $\left\{z_n = \dfrac{c^2}{b} u_n + \dot{u}_n, \dot{z}_n = \dfrac{c^2}{b}\dot{u}_n + \ddot{u}_n\right\}$ solutions of problem \ref{usistz} such that \begin{subnumcases}{}
	\|z_n\|_{L^2(0,T; H^{1 - \delta}(\Omega))}^2 +  \|\dot{z}_n\|_{L^2(0,T; H^{- \delta}(\Omega))}^2 \equiv 1 \label{E10-3}	 \\[2mm]
	\dfrac{c^2}{b} \int_0^T \int_\Omega \gamma(\ddot{u}_n)^2 dQ + \int_0^T \int_{\Gamma_1}\kappa_1 (\dot{z}_n)^2d\Sigma_1 \to 0, \ \mbox{as} \ n \to +\infty. \label{E10-4}
\end{subnumcases}

From idenity \eqref{e1id} (with $f = 0$) we see that the uniform boundedness $E_1^n(0)$ implies uniform boundedness of $E_1^n(t)$, $ t \in [0,T]$. Therefore, one might choose a (non--relabeled) subsequence satisfying \begin{subequations}\begin{align}
		z_{n} \to \ \mbox{some } \zeta, \ \mbox{weak}^\ast \ \mbox{in } L^\infty(0,T;H^1(\Omega)) 
		\label{E10-72a} \\ \dot{z}_{n} \to \ \mbox{some } \zeta_1, \ \mbox{weak}^\ast \ \mbox{in } L^\infty(0,T; L^2(\Omega)) \hookrightarrow L^2(0,T; H^{ - \delta}(\Omega)); \label{E10-72b} \\ 
	\gamma^{1/2}\dot{u}_{n} \to \ \mbox{some } \eta, \ \mbox{weak}^\ast \ \mbox{in } L^\infty(0,T;L^2(\Omega)); \label{E10-72c} 
\end{align}\end{subequations}  It easily follows from distributional calculus that $\dot{\zeta} = \zeta_1$ and, in the limit, the functions $\zeta$ and $\eta$ satisfy the equation \begin{subnumcases}{}
\ddot{\zeta} = b\Delta \zeta -\gamma^{1/2} \dot{\eta} \hspace{7cm} \mbox{in} \ Q  \label{E10-9a} \\[2mm]
	\gamma^{1/2}\dot{\zeta} = \dfrac{c^2}{b}\eta + \dot{\eta} \label{E10-9aa} \\[2mm]
	\left[\dfrac{\partial \zeta}{\partial \nu}+ \kappa_1\dot{\zeta}\right]\biggr\rvert_{\Sigma_1} = 0 ; \qquad \left[\dfrac{\partial \zeta}{\partial \nu}+ \kappa_0\zeta\right]\biggr\rvert_{\Sigma_0} = 0. \label{E10-9c} 
\end{subnumcases} plus respective initial data.

It follows from the weak convergence that there exist $M$ independent of $n$ such that \begin{equation}
		\|(z_n, \dot{z}_n)\|_{L^\infty(0,T; H^{1}(\Omega) \times H^{-\delta}(\Omega))} = \|z_n\|_\Lambda \leqslant \ M,\label{E10-17}
	\end{equation} for all $n$. Then, by \emph{compactness} (of $\Lambda$ in $L^2(0,T; H^{1-\delta}(\Omega))$ there exists a subsequence, still indexed by $n$, such that \begin{equation}
		z_n \to \zeta \ \mbox{strongly in } L^2(0,T; H^{1 - \delta}(\Omega)). \label{E10-21}
	\end{equation} 

Next we show that $\eta$ and $\zeta$  are zero elements. 
Indeed, from (\ref{E10-4}) 
we obtain that  
$\gamma^{1/2}\ddot{u}_n \to 0$ in $L^2(0,T;L^2(\Omega) $ and
$\dot{z}_n\rvert_{\Gamma_1}  \to 0 $ in $ L^2(0,T;L^2(\Gamma_1)).$ This implies that
 $\dot{\eta} =0 $ and $\dot{\zeta}|_{\Gamma_1} =0$. 
\end{proof}

\fi

We are ready to establish the exponential decay of the the energy functional $E_1$.
\begin{theorem}\label{thm34}
Assume that $f=0$. Hence, the energy functional $E_1$ is exponentially stable, i.e. there exists $T>0$ and constants $M,\omega>0$ such that 
\begin{equation}\label{e1exp}
    E_1(t) \leq M e^{-\omega t} E_1(0),\quad\mbox{for}~t\geqslant 0.
\end{equation}
\end{theorem}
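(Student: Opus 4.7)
The strategy is the classical Haraux--Zuazua type observability-to-stability reduction: combine the absorbed integral estimate (Propositions \ref{id} and \ref{compuniq}) with the energy identity of Proposition \ref{ene1id} to derive an observability inequality of the form $E_1(0)\lesssim \int_0^T D_\Psi(\sigma)d\sigma$, and then iterate on time intervals of length $T$ to get exponential decay.

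More precisely, the first step is to combine Propositions \ref{id} and \ref{compuniq} in the case $f=0$ to obtain, for any $0<s<T/2$,
\begin{equation}\label{plan1}
\int_s^{T-s} E_1(t)\,dt \;\leqslant\; C_1\bigl[E_1(s)+E_1(T-s)\bigr] \;+\; C_T\int_0^T D_\Psi(\sigma)\,d\sigma,
\end{equation}
where $C_1$ does not depend on $T$. Next, I invoke Proposition \ref{ene1id} with $f\equiv 0$: this simultaneously gives monotonicity $E_1(t_2)\leqslant E_1(t_1)$ for $t_1\leqslant t_2$, the identity $\int_0^T D_\Psi(\sigma)d\sigma = E_1(0)-E_1(T)$, and the bound $E_1(s),E_1(T-s)\leqslant E_1(0)$. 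Monotonicity also yields $\int_s^{T-s}E_1(t)dt \geqslant (T-2s)E_1(T)$. Plugging these three facts into \eqref{plan1} gives
\begin{equation}\label{plan2}
(T-2s)E_1(T)\;\leqslant\; 2C_1 E_1(0) + C_T\bigl[E_1(0)-E_1(T)\bigr],
\end{equation}
which, after moving $C_T E_1(T)$ to the left, becomes $(T-2s+C_T)E_1(T)\leqslant (2C_1+C_T)E_1(0)$. Fixing $s=1$ and choosing $T$ large enough that $T>2C_1+2$ makes the ratio $\gamma:=(2C_1+C_T)/(T-2+C_T)$ strictly less than $1$, giving the desired observability estimate $E_1(T)\leqslant \gamma E_1(0)$.

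Finally, I would apply the semigroup property of the linear $z$--flow to iterate: $E_1(nT)\leqslant \gamma^n E_1(0)$ for every $n\in\mathbb{N}$. For arbitrary $t\geqslant 0$ write $t=nT+r$, $0\leqslant r<T$, and invoke monotonicity once more to conclude $E_1(t)\leqslant \gamma^{-1}\gamma^{t/T}E_1(0)=Me^{-\omega t}E_1(0)$ with $\omega:=-T^{-1}\log\gamma>0$ and $M:=\gamma^{-1}$. I do not foresee a genuinely difficult step, since all the heavy analytical machinery -- the multiplier identities, the bending vector field $h$, the tangential trace estimate, and the compactness--uniqueness absorption of $lot_\delta(z)$ -- has been completed in Propositions \ref{id} and \ref{compuniq}. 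The only point requiring care is verifying that the constant $C_1$ in front of $E_1(s)+E_1(T-s)$ is genuinely $T$--independent so that, even though $C_T\to\infty$ with $T$, the competing term $T-2s$ eventually dominates $2C_1$; this is why it is essential that in \eqref{plan1} the coefficient of the initial/final energies is not absorbed into $C_T$.
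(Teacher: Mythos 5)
Your proposal is correct and follows essentially the same route as the paper: both arguments combine Propositions \ref{id} and \ref{compuniq} with the energy identity of Proposition \ref{ene1id} to produce a contraction $E_1(T)\leqslant \mu E_1(0)$ with $\mu<1$ for a suitably large fixed $T$, and then iterate over intervals of length $T$. The only (immaterial) difference is bookkeeping -- you bound $E_1(s)+E_1(T-s)$ by $2E_1(0)$ and use $\int_s^{T-s}E_1\geqslant (T-2s)E_1(T)$ directly, while the paper first derives $\int_0^T E_1 \lesssim E_1(T)+C_T\int_0^T D_\Psi$ and $TE_1(T)\lesssim \int_0^T E_1+C_T\int_0^T D_\Psi$ before absorbing -- and your closing caveat about the $T$-independence of the coefficient of the endpoint energies is exactly the point the paper's convention for $\lesssim$ is designed to guarantee.
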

\begin{proof}
For $f = 0$, identity \eqref{e1id} implies
\begin{equation*}
\left(\int_0^s + \int_{T-s}^T\right) E_1(t) dt \leq 2sE_1(0).
\end{equation*}
Since $s<T/2$ can be taken arbitrarily small, we fix $s<1/2$ in the above inequality. Then by dissipativity of $E_1$ (for $f = 0$) along with Propositions \ref{id} and \ref{compuniq} we infer
\begin{equation}\label{e1xp_1}
    \int_0^T E_1(t) dt
    \lesssim E_1(T) + C_T\int_0^T D_\Psi(s)ds.
\end{equation}
On the other hand, using identity \eqref{e1id} (with $f=0$) once more, we deduce
\begin{equation}\begin{aligned}\label{e1xp_2}
    TE_1(T)
    \lesssim \int_0^TE_1(t)dt + C_T\int_0^T D_\Psi(s)ds.
\end{aligned}\end{equation}
Combining \eqref{e1xp_1} and \eqref{e1xp_2} we arrive at
\begin{equation*}
    (T-C)E_1(T) + \int_0^T E_1(t) dt
    \leq C_T \int_0^T D_\Psi(s)ds
\end{equation*}
for some $C>0$. Choosing $T=2C$ and replacing the ``damping'' term using identity \eqref{e1id} (with $f=0$) we rewrite the above estimate as follows
\begin{equation*}
    E_1(T) + \int_0^T E_1(t) dt
    \lesssim C_T[E_1(0) - E_1(T)]
\end{equation*}
which implies
\begin{equation*}
    E_1(T) \leqslant \frac{C_T}{1+C_T} E_1(0) = {\mu} E_1(0),
\end{equation*}
where $0<\mu<1$ does not depend on the solution. This implies
\eqref{e1exp} with $\omega = |\ln{\mu}|/T$ and $M=1/\mu$.
\end{proof}

The result of Theorem \ref{thm34}  is the  key to establish the exponential stability of of the semigroup $S(t)$, generated by $\mathbb{A}$ on  $\mathbb{H}.$

\subsection{Proof of Theorem \ref{thm34n} }
\begin{proof}[\bf Part (i)] Notice that the exponential decay for $E_1$ obtained in Theorem \eqref{thm34} implies exponential decay of the quantities $\|z\|_{\calD(A^{1/2})}, \|z_t\|_{L^2(\Omega)}$, and we will show that this implies exponential decay for the total energy  $E(t)$, provided that the initial data $u_0 $ is controlled with respecyt to the topology induced by $A^{1/2} $. For this, the only remaining quantity we need to show exponential decay is $\| u\|_{\calD(A^{1/2})}$ and this follows from the fact that $bu_t + c^2u = z$. Indeed, the variation of parameters formula implies that \begin{equation}
		\label{NE4-35} u(t) = e^{-\frac{c^2}{b}t}u_0 + \int_0^t e^{-\frac{c^2}{b}(t-\tau)}z(\tau)d\tau,
	\end{equation} then, computing the $\calD(A^{1/2})$--norm both sides we estimate \begin{equation}
		\|u(t)\|_{\calD(A^{1/2})} \leqslant e^{-\frac{c^2}{b} t} \|u_0\|_{\calD(A^{1/2})} + \int_0^t e^{-\frac{c^2}{b}(t-\tau)}\|z(\tau)\|_{\calD(A^{1/2})}d\tau \label{E77nn-15}
	\end{equation} hence it follows from \eqref{e1exp} that \begin{align*}
		\|u(t)\|_{\calD(A^{1/2})} &\leqslant e^{-\frac{c^2}{b} t} \|u_0\|_{\calD(A^{1/2})} + ME_1(0)\int_0^t e^{-\frac{c^2}{b}(t-\tau)-\omega\tau}d\tau \nonumber \\ &\leqslant e^{-\frac{c^2}{b} t} E(0) + \dfrac{(c^2 -b\omega)(e^{-\omega t}-e^{-\frac{c^2}{b}t})}{\omega c^2}ME(0)\leqslant \overline{M}e^{-\overline{\omega}t}E(0).
	\end{align*} where we have made the benign assumption that $\dfrac{c^2}{b} > \omega$ from \eqref{e1exp}, as if $\omega \geqslant \dfrac{c^2}{b}$ we use formula \eqref{e1exp} with $\omega_1 := \dfrac{c^2}{b} - \varepsilon$ so $\omega > \omega_1$ and $\dfrac{c^2}{b} > \omega_1$.
	
	The proof is complete.
\end{proof}

\begin{proof} [\bf Part (ii)]
The first step towards showing $\mathbb{H}_1$--level stabilization is to derive energy estimate for the higher order energy functional $E_2$. We start with a basic multiplier identity. \begin{proposition}
	Let $\Psi = (u,z,z_t)$ be a classical solution for \eqref{zprob}. Then for all $0 \leqslant s < t \leqslant T$ the following identity holds \begin{align}
	\label{deltaz} 
	b\int_s^t (\Delta z, \Delta u)d\sigma &= \left[(z_t,\Delta u)+\dfrac{1}{2}\|\nabla{z}\|_2^2\right]\biggr\rvert_s^t+\int_s^t\int_\Gamma z_t \partial_\nu zd\Gamma d\sigma \nonumber \\ &+\int_s^t\left[\left(\dfrac{c^2}{b}z_t+\gamma u_{tt},\Delta u\right)\right]d\sigma -\int_s^t(f,\Delta u)d\sigma
	\end{align}
\end{proposition}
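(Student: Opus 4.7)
The identity is a multiplier calculation: multiply the strong form of the $z$-equation by $\Delta u$ and integrate. Recall (as already used in establishing generation on $\mathbb{H}_1$) that the substitution $bz = bu_t + c^2 u$ transforms the linearized JMGT equation, together with the forcing, into the second order PDE
\begin{equation*}
z_{tt} - b\,\Delta z \;=\; -\gamma u_{tt} + f \qquad \text{in } \Omega,
\end{equation*}
valid pointwise for classical solutions $\Psi = (u,z,z_t) \in \calD(\mathbb{A})$. Pairing this equation with $\Delta u \in L^2(\Omega)$ in $L^2(\Omega)$ and integrating over $[s,t]$ gives
\begin{equation*}
b\int_s^t (\Delta z,\Delta u)\,d\sigma \;=\; \int_s^t (z_{tt},\Delta u)\,d\sigma \;+\; \int_s^t (\gamma u_{tt} - f,\Delta u)\,d\sigma,
\end{equation*}
so everything reduces to rewriting $\int_s^t (z_{tt},\Delta u)\,d\sigma$ in the desired form.

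To handle that term, I would integrate by parts in time,
\begin{equation*}
\int_s^t (z_{tt},\Delta u)\,d\sigma = (z_t,\Delta u)\Big|_s^t - \int_s^t (z_t,\Delta u_t)\,d\sigma,
\end{equation*}
and then use the defining relation $u_t = z - \tfrac{c^2}{b} u$ (obtained from $bz = bu_t + c^2u$) at the level of the Laplacian to substitute $\Delta u_t = \Delta z - \tfrac{c^2}{b}\Delta u$. This produces
\begin{equation*}
\int_s^t (z_t,\Delta u_t)\,d\sigma \;=\; \int_s^t (z_t,\Delta z)\,d\sigma \;-\; \tfrac{c^2}{b}\int_s^t (z_t,\Delta u)\,d\sigma.
\end{equation*}
Finally I would invoke Green's identity for the $(z_t,\Delta z)$ term, together with $(\nabla z_t,\nabla z) = \tfrac{1}{2}\tfrac{d}{d\sigma}\|\nabla z\|_2^2$, to obtain
\begin{equation*}
\int_s^t (z_t,\Delta z)\,d\sigma \;=\; -\tfrac{1}{2}\|\nabla z\|_2^2\Big|_s^t \;+\; \int_s^t\!\!\int_\Gamma z_t\,\partial_\nu z\,d\Gamma\,d\sigma.
\end{equation*}
Assembling these three pieces and moving the $\tfrac{c^2}{b}(z_t,\Delta u)$ contribution into the $(\tfrac{c^2}{b}z_t + \gamma u_{tt},\Delta u)$ grouping produces the claimed identity \eqref{deltaz}.

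The calculation is essentially routine, so I do not anticipate a genuine obstacle. The only point requiring a word of care is regularity: all of the pairings $(\Delta z,\Delta u)$, $(z_t,\Delta u)$, $(z_t,\Delta z)$ and the boundary trace $\int_\Gamma z_t\,\partial_\nu z\,d\Gamma$ are well defined for classical solutions in $\calD(\mathbb{A})$, since then $u,z \in H^2(\Omega)$ and $z_t \in H^1(\Omega)$, so that both $\Delta z$ and $\Delta u$ belong to $L^2(\Omega)$ and the traces $z_t\vert_\Gamma$, $\partial_\nu z\vert_\Gamma$ lie in $H^{1/2}(\Gamma)$ and $H^{-1/2}(\Gamma)$ respectively. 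Once the identity is established on $\calD(\mathbb{A})$, the usual density argument (as invoked elsewhere in the paper for $E_1$) allows extension to the semigroup solutions used in the sequel.
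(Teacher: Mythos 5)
Your proposal follows the paper's proof essentially verbatim: the same multiplier $\Delta u$ applied to \eqref{zprob}, the same integration by parts in time for $(z_{tt},\Delta u)$, the same substitution $\Delta u_t = \Delta z - \tfrac{c^2}{b}\Delta u$, and Green's identity for $(z_t,\Delta z)$. The only cosmetic difference is that the paper goes one step further and substitutes the boundary conditions $\lambda\partial_\nu z = -\kappa_0 z$ on $\Gamma_0$ and $\partial_\nu z = -\kappa_1 z_t$ on $\Gamma_1$ to rewrite the trace integral explicitly as $-\tfrac{1}{2\lambda}\|\kappa_0^{1/2}z\|_{\Gamma_0}^2\bigr\rvert_s^t - \int_s^t\|\kappa_1^{1/2}z_t\|_{\Gamma_1}^2\,d\sigma$, whereas you leave it in the form $\int_\Gamma z_t\,\partial_\nu z\,d\Gamma$ appearing in the statement. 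One caution: your intermediate identities are each correct, but assembling them actually produces the boundary term with a \emph{minus} sign, $-\int_s^t\int_\Gamma z_t\,\partial_\nu z\,d\Gamma\,d\sigma$, rather than the plus sign displayed in \eqref{deltaz}; the paper's own computation carries the same sign slip (its $-\tfrac{1}{\lambda}\int_{\Gamma_0}\kappa_0 z_t z - \int_{\Gamma_1}\kappa_1 z_t^2$ should be $+$). The discrepancy is immaterial for the later estimates, since in either sign the term is controlled by the boundary damping, but you should not assert that the assembly ``produces the claimed identity'' without flagging it.
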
 \begin{proof}We start by noticing that since $\Psi$ is classical, we have $\Delta u, \Delta u_t \in L^2(\Omega)$, moreover, we compute \begin{align*}
(z_{tt},\Delta u) &= \dfrac{d}{dt}(z_t,\Delta u) - (z_t,\Delta u_t) = \dfrac{d}{dt}(z_t,\Delta u) - \left(z_t,\Delta \left(z-\dfrac{c^2}{b}u\right)\right) \\&= \dfrac{d}{dt}(z_t,\Delta u) + \dfrac{1}{2}\dfrac{d}{dt}\|\nabla z\|_2^2 -\dfrac{1}{\lambda}\int_{\Gamma_0}z_t(\kappa_0(x)z)\Gamma_0 -
\int_{\Gamma_1}z_t(\kappa_1(x)z_t)d\Gamma_1+\left(\dfrac{c^2}{b}z,\Delta u\right) \\&= \dfrac{d}{dt}(z_t,\Delta u) + \dfrac{1}{2}\dfrac{d}{dt}\|\nabla z\|_2^2 -\dfrac{1}{2\lambda}\dfrac{d}{dt}\|\kappa_0^{1/2}z\|_{\Gamma_0}^2 -
\|\kappa_1^{1/2}z_t\|_{\Gamma_1}^2+\left(\dfrac{c^2}{b}z,\Delta u\right).
\end{align*}
	Thus, taking the $L^2$--inner product of $z_{tt} - b\Delta z = -\gamma u_{tt} + f$ with $\Delta u \in L^2(\Omega)$ we get \begin{align*}
	b\int_s^t(\Delta z,\Delta u)d\sigma &= \int_s^t (z_{tt} +\gamma u_{tt}-f,\Delta u)d\sigma \nonumber \\ &=\int_s^t (z_{tt} , \Delta u)d\sigma +\int_s^t (\gamma u_{tt}-f,\Delta u)d\sigma \nonumber \\ &= \left[(z_t,\Delta u)+\dfrac{1}{2}\|\nabla z\|_2^2-\dfrac{1}{2\lambda} \|\kappa_0^{1/2}z\|_{\Gamma_0}^2\right]\biggr\rvert_s^t-\int_s^t \|\kappa_1^{1/2}z_t\|_{\Gamma_1}^2 d\sigma\nonumber \\ &+\int_s^t\left[\left(\dfrac{c^2}{b}z_t+\gamma u_{tt},\Delta u\right)\right]d\sigma -\int_s^t(f,\Delta u)d\sigma.
	\end{align*} 
\end{proof}

We now derive the estimate for $E_2.$ We take the $L^2$--inner product of $$\Delta(bu_t + c^2 u) = b\Delta z$$ with $\Delta u$ and integrate in time. By connecting it with \eqref{deltaz} one obtains that  \begin{align}\label{eq0}b\|\Delta u\|_2^2 + c^2\int_0^T \|\Delta u\|_2^2 &= b\|\Delta u_0\|_2^2 + 
b\int_0^T (\Delta z, \Delta u) \nonumber  \\ &= b\|\Delta u_0\|_2^2 + \left[(z_t,\Delta u)+\dfrac{1}{2}\|\nabla{z}\|_2^2-\dfrac{1}{2\lambda} \|\kappa_0^{1/2}z\|_{\Gamma_0}^2\right]\biggr\rvert_0^T\nonumber \\ &-\int_0^T \|\kappa_1^{1/2}z_t\|_{\Gamma_1}^2 d\sigma+\int_0^T\left[\left(\dfrac{c^2}{b}z_t+\gamma u_{tt},\Delta u\right)\right]d\sigma -\int_0^T(f,\Delta u)d\sigma\end{align} 

Now since all terms in \eqref{eq0} are benign in the sense that all (but $f$ and $\Delta u$) are either members of $E(t)$ or are bounded above by the damping, it follows that for each $\varepsilon > 0$ there exists $C_{\varepsilon} > 0$ such that \begin{align} \label{eq1}
b\|\Delta u(T)\|_2^2 + c^2\int_0^T \|\Delta u\|_2^2 &\lesssim \E(0) + \varepsilon \left(\|\Delta u\|_2^2 + \int_0^T \|\Delta u\|_2^2 d\sigma\right) \nonumber \\&+ C_{\varepsilon} \left(E_1(t) + \int_0^T E_1(\sigma)d\sigma +\int_Q f^2dQ\right).
\end{align}

Then, taking $\varepsilon$ small and using \eqref{e1exp} we have  \begin{align} \label{eq11}
b\|\Delta u(T)\|_2^2 + c^2\int_0^T \|\Delta u\|_2^2 &\lesssim \E(0) + \int_0^T E_1(\sigma)d\sigma +\int_Q f^2dQ,
\end{align}
\ifdefined\xxxxxxxxx
and from here we get exponential stability of the semigroup $T(t)$ generated by $\mathbb{A}$ in $\mathbb{H}_1$ as we explain below.

{\bf Marcelo: I will be adding some material here just to make sure that we are on the same page. In addition, I will be very busy later on-trying to expediate matters as much as possible}. 
{\bf Marcelo: Here you need also to generate the remaining parts of $H^2 $ norm-normal derivatives in $H^{1/2} $ . You use invariance of the compatibility conditions  under the dynamics, and then regularity of the normals and then elliptic tehory -or refer to the definition of the norm in $H_1$. }. 
\fi

From \eqref{eq11} we have obtained that $\Delta u(t) \in L^2(\Omega) $. In addition $ (u,u_t,u_{tt}) \in \mathbb{H} $ implies that $u(t) \in H^1(\Omega) $ and $u_t(t) \in H^1(\Omega) $. 
By a  standard duality argument one this  obtains that $\partial_{\nu } u(t)  \in H^{-1/2}(\Gamma) $. We will be able to improve this regularity by appealing to $\mathbb{H} $ regularity already obtained in te previous section. 
On the other hand, by using invariance of boundary conditions we also have 
$$ \partial_{\nu} u(t)|_{\Gamma_0}  = -\kappa_0 u(t) \in H^{1/2}(\Gamma_0 ) \qquad  \partial_{\nu} u(t)|_{\Gamma_1}  = -\kappa_1 u_t(t) \in H^{1/2}(\Gamma_1).$$
By the definition of the norm in $\mathbb{H}_1$ the above implies that $ (u,u_t,u_{tt}) \in \mathbb{H}_1 $, as desired. Moreover we have a control of the norms: 
\begin{align*}\|(u,u_t,u_{tt})\|_{\mathbb{H}_1} &\leqslant  C| |(u,u_t,u_{tt})\|_{\mathbb{H}} + \|\Delta u(t)\|_2 +  \|\sqrt{\kappa_0}u(t)\|_{H^{1/2} (\Gamma_0 } +  \|\sqrt{\kappa_1}u_t(t)|_{H^{1/2}(\Gamma_1)} \\ &
\leqslant C\left(\|(u,u_t,u_{tt})\|_{\mathbb{H}} + \|\Delta u(t)\|_2\right) \end{align*}
which proved the desired regularity in $\mathbb{H}_1$. We are ready to complete the proof. 

Let $f = 0.$ Adding $E(T)  + \int_0^T E(\sigma) d\sigma$ to both sides of  \eqref{eq11} we obtain, \begin{align*}
	\E(T) + \int_0^T \E(\sigma)d\sigma &\lesssim \E(0) + E(T) + \int_0^T E(\sigma)d\sigma \\ &\leqslant \E(0) + ME(0)e^{-\omega t} + ME(0)\int_0^t e^{-\omega \sigma}d\sigma \\ &= \E(0)  + ME(0)e^{-\omega t} - \omega^{-1}ME(0)\left[e^{-\omega t}-1\right] < +\infty,
	\end{align*} for all $t \geqslant 0$, for some $\omega, M > 0.$ By making $T \to \infty$ we see that $$\int_0^\infty \E(\sigma)d\sigma < +\infty,$$ and the result follows by Pazy--Datko's Theorem \cite{pazy_S_1992}.
\end{proof}

\section{Proof of Theorem \ref{nsem} -- Construction of Global $\mathbb{H}_1$-- valued Solutions}\label{nonlinearsem}

Our goal now is to prove that fixed--point solutions can be constructed  for the nonlinear problem in $\mathbb{H}_1.$ To this end, fix $r > 0$ such that $\|\Phi_0\|_{\mathbb{H}_1} \leqslant r$ and let $X$ be the set defined as 
\begin{equation*}
X^\beta = \left\{\Psi = \begin{bmatrix}w \\ w_t \\ w_{tt}\end{bmatrix} \in C([0,T]; \mathbb{H}_1); \sup_{t \in [0,T]} \|\Psi(t)\|_{\mathbb{H}_1} \lesssim r+1 \ \mbox{and} \ \sup_{t \in [0,T]}\|\Psi(t)\|_{\mathbb{H}} < \beta \right\}
\end{equation*}
where $\beta > 0$ is for the time being a given positive number but we will take it to be sufficiently small later. Moreover, the condition $\sup\limits_{t \in [0,T]} \|\Psi(t)\|_{\mathbb{H}_1} \lesssim r +1$ simply means that solutions will exist in bounded sets of $C([0,T];\mathbb{H}_1)$ with respect to $\mathbb{H}$ but this introduces no restriction on the size of the data in $\mathbb{H}_1$. The number $1$ could, then, be replaced by any other positive number. Let's equip $X^\beta$ it with the norm 
\begin{equation*}\label{norm}\|\Psi\|_{X^\beta}^2 := \sup_{t \in [0,T]} \|\Psi(t)\|_{\mathbb{H}_1}^2.\end{equation*} We start with a regularity lemma. \begin{lemma}\label{fest}For $\Psi = (w,w_t,w_{tt})^\top$ let the action $\cF$ on $\Psi$ be given by \begin{equation}
	\label{eqF} \cF(\Psi) = \dfrac{1}{\tau}\begin{bmatrix}
	0 \\ 0 \\ w_t^2 + ww_{tt}
	\end{bmatrix}.
	\end{equation} Then the following assertions hold true: \begin{itemize}
		\item[\bf (i)] $\cF$ defines a continuous map $\cF: X^\beta \to C([0,T]; \mathbb{H}_1)$ and, in particular, for each $t$ the inequality \begin{equation}
		\label{contcF} \left\|\cF(\Psi(t))\right\|_{\mathbb{H}_1} \leqslant \dfrac{C\beta}{\tau}\|\Psi(t)\|_{\mathbb{H}_1}, \qquad \Psi \in X^\beta
		\end{equation} holds for some $C>0$ fixed.
		
		\item[\bf (ii)] Stronger than continuity, the following estimate holds:\begin{equation}\label{imp2}\tau\|\cF(\Phi)\|_{C([0,T];\mathbb{H}_1)} \lesssim \beta^2 + \beta^{3/2}\sqrt{r+1}.\end{equation} 
	\end{itemize} \end{lemma}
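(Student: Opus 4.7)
The plan is to exploit two structural facts about $\cF$. First, only the third coordinate of $\cF(\Psi)$ is nonzero, so the $\mathbb{H}_1$--norm of $\cF(\Psi)$ collapses to $\tau^{-1}\|w_t^2 + ww_{tt}\|_{2}$ (the two ``new'' ingredients of the $\mathbb{H}_1$--norm, namely $\|\Delta\xi_1\|_2$ and $\|\partial_\nu\xi_1\|_{H^{1/2}(\Gamma)}$, vanish identically on $(0,0,\cdot)^\top$). Second, the definition of $X^\beta$ neatly decouples the $\mathbb{H}$-- and $\mathbb{H}_1$--scales: the constraint $\sup_t\|\Psi\|_{\mathbb{H}}<\beta$ forces $\|w\|_{H^1}, \|w_t\|_{H^1}, \|w_{tt}\|_{2}\lesssim\beta$, while the $\mathbb{H}_1$--bound, combined with the equivalence $\|\Psi\|_{\mathbb{H}_1}^2\sim\|w\|_{H^2}^2+\|w_t\|_{H^1}^2+\|w_{tt}\|_{2}^2$ granted by elliptic regularity and the boundary conditions built into $\mathbb{H}_1$, additionally gives $\|w\|_{H^2}\lesssim r+1$. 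Each nonlinear product will therefore split into a small factor and a merely bounded one.

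For the quadratic term I will invoke the Sobolev embedding $H^1(\Omega)\hookrightarrow L^4(\Omega)$ (valid for $d\leqslant 3$) to obtain $\|w_t^2\|_{2}=\|w_t\|_{L^4}^2\lesssim\|w_t\|_{H^1}^2\lesssim\beta^2$. For the mixed product I will use the H\"older bound $\|ww_{tt}\|_{2}\leqslant\|w\|_{L^\infty}\|w_{tt}\|_{2}$ together with a Gagliardo--Nirenberg type interpolation
\begin{equation*}
\|w\|_{L^\infty(\Omega)}\lesssim\|w\|_{H^1(\Omega)}^{1/2}\|w\|_{H^2(\Omega)}^{1/2},
\end{equation*}
which in $d=3$ follows from the interpolation identity $[H^1,H^2]_{1/2+\varepsilon}=H^{3/2+\varepsilon}\hookrightarrow L^\infty$ after a harmless (absorbed into the constant) adjustment of exponents. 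Substituting the $X^\beta$--bounds yields $\|w\|_{L^\infty}\lesssim\beta^{1/2}(r+1)^{1/2}$ and hence $\|ww_{tt}\|_{2}\lesssim\beta^{3/2}\sqrt{r+1}$. Summing the two contributions and multiplying through by $\tau$ delivers precisely \eqref{imp2}.

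Part (i) is then a small rearrangement of the same inequalities designed to produce one factor of $\beta$ and one factor of $\|\Psi(t)\|_{\mathbb{H}_1}$ in each term. For the quadratic piece I write $\|w_t^2\|_{2}\lesssim\|w_t\|_{H^1}\cdot\|w_t\|_{H^1}\leqslant\beta\,\|\Psi(t)\|_{\mathbb{H}_1}$, keeping one copy of $\|w_t\|_{H^1}$ on the $\mathbb{H}$--small side and the other inside the full $\mathbb{H}_1$--norm. For the mixed piece I use $\|ww_{tt}\|_{2}\leqslant\|w\|_{L^\infty}\|w_{tt}\|_{2}\lesssim\|w\|_{H^2}\|w_{tt}\|_{2}\leqslant\|\Psi(t)\|_{\mathbb{H}_1}\cdot\beta$. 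Adding these two bounds produces \eqref{contcF}. Continuity of $t\mapsto\cF(\Psi(t))$ into $\mathbb{H}_1$ is then read off from the continuity $\Psi\in C([0,T];\mathbb{H}_1)\subset C([0,T];\mathbb{H})$ by writing the difference $\cF(\Psi_1)-\cF(\Psi_2)$ as a sum of bilinear cross terms, $w_{1,t}^2-w_{2,t}^2=(w_{1,t}+w_{2,t})(w_{1,t}-w_{2,t})$ and $w_1w_{1,tt}-w_2w_{2,tt}=w_1(w_{1,tt}-w_{2,tt})+(w_1-w_2)w_{2,tt}$, each of which is estimated by exactly the same Sobolev/H\"older chain.

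The only genuinely delicate step is the borderline Gagliardo--Nirenberg estimate in dimension $d=3$, where $H^{3/2}(\Omega)$ fails to embed into $L^\infty(\Omega)$ and one is forced to work with $H^{3/2+\varepsilon}$ for some small $\varepsilon>0$. Since the relevant ranges of $\beta$ and $r$ are bounded sets fixed in advance, the $\varepsilon$--loss in the exponents is absorbed into the implicit constants and the final bounds \eqref{contcF} and \eqref{imp2} recover the clean form stated in the lemma. No further obstruction is anticipated; everything else is routine Sobolev calculus tailored to the $\mathbb{H}$--small / $\mathbb{H}_1$--bounded dichotomy encoded in $X^\beta$.
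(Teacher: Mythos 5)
Your proof is correct and follows essentially the same route as the paper: the $\mathbb{H}_1$--norm of $\cF(\Psi)$ reduces to $\tau^{-1}\|w_t^2+ww_{tt}\|_2$, the quadratic term is handled by $H^1(\Omega)\hookrightarrow L^4(\Omega)$, and the mixed term by H\"older together with the interpolation $\|w\|_\infty\lesssim\|w\|_{H^1}^{1/2}\|w\|_{H^2}^{1/2}$, distributing the factors between the $\mathbb{H}$--small and $\mathbb{H}_1$--bounded scales exactly as in the paper's estimate \eqref{interm}.

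One caveat on your closing paragraph: the claim that an $\varepsilon$--loss in the interpolation exponents can be ``absorbed into the implicit constants'' is not right as stated --- replacing $\|w\|_{H^1}^{1/2}\|w\|_{H^2}^{1/2}$ by $\|w\|_{H^1}^{1/2-\varepsilon}\|w\|_{H^2}^{1/2+\varepsilon}$ would change the powers of $\beta$ and $r+1$ in \eqref{imp2} (e.g.\ to $\beta^{3/2-\varepsilon}(r+1)^{1/2+\varepsilon}$), and $\beta^{-\varepsilon}$ is not bounded as $\beta\to0$. Fortunately the detour is unnecessary: the multiplicative inequality $\|w\|_{L^\infty(\Omega)}\lesssim\|w\|_{H^1(\Omega)}^{1/2}\|w\|_{H^2(\Omega)}^{1/2}$ is Agmon's inequality, valid exactly in dimension $d\leqslant3$ (it is not obtained by na\"ively interpolating the embedding scale through $H^{3/2}$), which is precisely what the paper invokes in its footnote; with that, your exponents come out clean and the proof stands.
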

	 \begin{proof}Recall that $w_t \in H_{\Gamma_1}^1(\Omega) \hookrightarrow L^4(\Omega)$ and then $w_t^2 \in C([0,T]; L^2(\Omega)).$ Moreover, since $H^2(\Omega) \hookrightarrow L^\infty(\Omega)$ it follows that $ww_{tt} \in C([0,T];L^2(\Omega)).$ For each $t$, interpolation inequalities\footnote{$\|w\|_4 \lesssim \|w\|_2^{1/2}\|w\|_{H^1}^{1/2}$ for all $w \in H^1(\Omega)$}\footnote{$\|w\|_\infty \lesssim \|w\|_{H^1}^{1/2}\|w\|_{H^2}^{1/2}$ for all $w \in H^2(\Omega)$} give  \begin{align}
\tau\|\cF(\Phi)(t)\|_{\mathbb{H}_1} &= \|w_t^2 + ww_{tt}\|_2 \lesssim \|w_t\|_4^2 + \|w\|_{\infty}\|w_{tt}\|_2 \nonumber \\ &\lesssim  \left\|\p w_t\right\|_2^2 + \left\|\p w\right\|_2^{1/2}\|w\|_{H^2(\Omega)} ^{1/2}\|w_{tt}\|_2 \label{interm} \\ &\lesssim \|\Psi(t)\|_{\mathbb{H}}\|\Psi(t)\|_{\mathbb{H}_1} \lesssim \beta\|\Psi(t)\|_{\mathbb{H}_1} \nonumber\end{align} which yields \eqref{contcF} and, by taking the supremum on both sides, also (i) altogether. Moreover, returning to the intermediate estimate \eqref{interm}, we further notice \begin{align*}
	\tau\|\cF(\Phi)(t)\|_{\mathbb{H}_1} &\lesssim \left\|\p w_t\right\|_2^2 + \left\|\p w\right\|_2^{1/2}\| w\|_{H^2(\Omega)}^{1/2}\|w_{tt}\|_2 \\[2mm] &\lesssim \left[\sup_{t \in [0,T]}\|\Psi(t)\|_{\mathbb{H}}\right]^2 + \left[\sup_{t \in [0,T]}\|\Psi(t)\|_{\mathbb{H}}\right]^{3/2}\left[\sup_{t \in [0,T]}\|\Psi(t)\|_{\mathbb{H}_1}\right]^{1/2} \\[2mm]  &\lesssim \beta^2 + \beta^{3/2}\sqrt{\sup_{t \in [0,T]}\|\Psi(t)\|_{\mathbb{H}_1}},
	\end{align*}which yields \eqref{imp2} and completes the proof.\end{proof}
The validity of the previous Lemma along with the fact that $\mathcal{A}$ generates $C_0$--semigroups $T(t)$ and $S(t)$ on $\mathbb{H}_1$ and $\mathbb{H}$ respectively, guarantees that, for each $\Psi \in X$ there exists a unique $\Phi = (u,u_t,u_{tt})^\top =: \Theta(\Psi) \in C([0,T];\mathbb{H}_1)$ solution of \eqref{usist} characterized as the variation of parameters formula with forcing term $\cF(\Psi)$ and initial condition $\Phi_0 = (u_0,u_1,u_2) \in \mathbb{H}_1$, i.e., \begin{equation}
\label{varpar} \Theta(\Psi)(t) = T(t)\Phi_0 + \int_0^t T(t-\sigma)\cF(\Psi)(\sigma) d\sigma
\end{equation} and we note that the same formula is valid if we replace $T(t)$ by $S(t).$ Moreover, uniform exponential stability implies the existence of numbers $\omega_0,\omega_1, M_0,M_1 > 0$ such that \begin{equation}\label{exp}\|T(t)\Phi_0\|_{\mathbb{H}_1}  \leqslant M_1e^{-\omega_1 t}\|\Phi_0\|_{\mathbb{H}_1} \ \mbox{and} \ \|T(t)\Phi_0\|_{\mathbb{H}}  \leqslant M_0e^{-\omega_0 t}\|\Phi_0\|_{\mathbb{H}}\end{equation}for all $t \geqslant 0.$ Among other properties, the exponential stability of the linear problem implies invariance of the map $\Theta$ in $X^\beta$, as we make precise below.
\begin{lemma}. Given $\Phi_0 \in \mathbb{H}_1 $ such that  $\|\Phi_0\|_{\mathbb{H}_1}\leq C$ . Then,  there exist $\beta >0$ and $\rho_{\beta} > 0$ with the property that if $\|\Phi_0\|_{\mathbb{H}} < \rho_\beta$ then the map $\Theta$ is $X^\beta$--invariant. \end{lemma}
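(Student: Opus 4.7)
My plan is to read off both invariance conditions directly from the variation--of--parameters representation \eqref{varpar}, using the two exponential bounds \eqref{exp} for the linear semigroup on $\mathbb{H}$ and on $\mathbb{H}_1$ together with the two complementary estimates \eqref{contcF} and \eqref{imp2} for the nonlinear forcing from Lemma \ref{fest}. A structural observation I will use throughout is that $\mathcal{F}(\Psi)$ has the form $(0,0,\tau^{-1}(w_t^2+ww_{tt}))^\top$, so the extra ingredients in the $\mathbb{H}_1$--norm (the $\|\Delta\xi_1\|_2$ and boundary contributions) vanish on $\mathcal{F}(\Psi)$. Hence $\|\mathcal{F}(\Psi)\|_{\mathbb{H}}=\|\mathcal{F}(\Psi)\|_{\mathbb{H}_1}$, which lets me apply Lemma \ref{fest} verbatim when propagating $\mathcal{F}(\Psi)$ through the $\mathbb{H}$--semigroup $S(t)$.

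For the $\mathbb{H}_1$--component of the invariance, I would apply $T(t)$ to the initial datum, estimate the Duhamel integral by $\int_0^t M_1e^{-\omega_1(t-\sigma)}\|\mathcal{F}(\Psi)(\sigma)\|_{\mathbb{H}_1}d\sigma$, and feed in \eqref{contcF}. Absorbing the exponential in time this produces
\[
\sup_{t\in[0,T]}\|\Theta(\Psi)(t)\|_{\mathbb{H}_1}
\;\leqslant\; M_1\,\|\Phi_0\|_{\mathbb{H}_1}+\frac{M_1 C\beta}{\tau\omega_1}\sup_{t\in[0,T]}\|\Psi(t)\|_{\mathbb{H}_1}.
\]
Reading $\|\Psi(t)\|_{\mathbb{H}_1}\leqslant K(r+1)$ as the precise meaning of the $\lesssim$ in the definition of $X^\beta$ with $K\geqslant 2M_1$, and choosing $\beta$ small enough that $M_1 C\beta/(\tau\omega_1)\leqslant 1/2$, I close the bound as $\sup_t\|\Theta(\Psi)(t)\|_{\mathbb{H}_1}\leqslant K(r+1)$.

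For the $\mathbb{H}$--component I would run the \emph{same} Duhamel identity but now estimate via $\|S(t)\cdot\|_{\mathbb{H}}\leqslant M_0 e^{-\omega_0 t}\|\cdot\|_{\mathbb{H}}$ and use the sharper inequality \eqref{imp2}, which has no factor of $\sup\|\Psi\|_{\mathbb{H}_1}$ linearly, only through the benign $\sqrt{r+1}$. This yields
\[
\sup_{t\in[0,T]}\|\Theta(\Psi)(t)\|_{\mathbb{H}}
\;\leqslant\; M_0\,\|\Phi_0\|_{\mathbb{H}}+\frac{M_0}{\omega_0\tau}\bigl(\beta^2+\beta^{3/2}\sqrt{K(r+1)}\bigr).
\]
Shrinking $\beta$ further so that the second term is at most $\beta/2$, and then choosing $\rho_\beta<\beta/(2M_0)$, the hypothesis $\|\Phi_0\|_{\mathbb{H}}<\rho_\beta$ forces $\sup_t\|\Theta(\Psi)(t)\|_{\mathbb{H}}<\beta$, closing the second half of the invariance.

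The main obstacle is simply the bookkeeping in the order the constants are fixed: first $r=C$ (from the hypothesis $\|\Phi_0\|_{\mathbb{H}_1}\leqslant C$) and $K\geqslant 2M_1$ governing the $\mathbb{H}_1$--radius of $X^\beta$; then $\beta$ small enough to enforce both $M_1C\beta/(\tau\omega_1)\leqslant 1/2$ and $M_0\omega_0^{-1}\tau^{-1}\bigl(\beta+\beta^{1/2}\sqrt{K(r+1)}\bigr)\leqslant 1/2$; and only then $\rho_\beta:=\beta/(2M_0)$. To conclude $\Theta(\Psi)\in X^\beta$ it remains to check $\Theta(\Psi)\in C([0,T];\mathbb{H}_1)$, which follows from strong continuity of $T(t)$ on $\mathbb{H}_1$ combined with the continuity $\mathcal{F}(\Psi)\in C([0,T];\mathbb{H}_1)$ supplied by Lemma \ref{fest}(i), ensuring the Duhamel integral is $\mathbb{H}_1$--valued and continuous in $t$.
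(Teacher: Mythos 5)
Your proposal is correct and follows essentially the same route as the paper: Duhamel's formula \eqref{varpar}, the two exponential bounds \eqref{exp}, and the nonlinear estimates of Lemma \ref{fest}, with $\beta$ and then $\rho_\beta$ chosen small in that order. The only cosmetic difference is that you invoke \eqref{contcF} for the $\mathbb{H}_1$--invariance where the paper uses \eqref{imp2}, and your explicit bookkeeping of the constant $K\geqslant 2M_1$ hidden in the $\lesssim$ of the definition of $X^\beta$ is in fact slightly cleaner than the paper's condition $M_1 C < \tfrac{1}{2}(r+1)$.
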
 \begin{proof}
	Proving this claim is equivalent to prove that there exists $\beta > 0$ for which $\|\Theta(\Psi)(t)\|_{\mathbb{H}_1} \lesssim r+1$ and $\|\Theta(\Psi)(t)\|_{\mathbb{H}} < \beta$ for all $t \in [0,T)$ and each $\Psi \in X^\beta$, provided $\|\Phi_0\|_{\mathbb{H}} < \rho_\beta$, with $\rho_\beta$ conveniently chosen.  From \eqref{varpar} and \eqref{exp} it follows, for each $t \in [0,T)$, \begin{align}
	\label{invar} \|\Theta(\Psi)(t)\|_{\mathbb{H}_1} &\leqslant  \|T(t)\Phi_0\|_{\mathbb{H}_1} + \int_0^t \left\|T(t-\sigma)\cF(\Psi)(\sigma)\right\|_{\mathbb{H}_1}d\sigma \nonumber \\ & \leqslant M_1\left(\|\Phi_0\|_{\mathbb{H}_1} + \int_0^t e^{-\omega_1 (t-\sigma)}\|\cF(\Psi)(\sigma)\|_{\mathbb{H}_1} d\sigma\right)\nonumber \\ &\lesssim M_1[ \|\Phi_0\|_{\mathbb{H}_1} + \dfrac{C_{\omega_1}}{\tau} \sup\limits_{t \in [0,T] }\tau\|\cF(
\Psi)(t)\|_{\mathbb{H}_1}]  \nonumber \\ &\lesssim   M_1 C  + M_1 \tau^{-1}C_{\omega_1}  \left(\beta^2 + \beta^{3/2}\sqrt{r+1}\right) \lesssim r+1,
	\end{align}  provided $M_1 C < 1/2(r+1) $  and  $\beta$  is sufficiently small. Moreover, by Lemma \ref{fest} (and again \eqref{varpar} and \eqref{exp}) \begin{align}
	\label{invar2} \|\Theta(\Psi)(t)\|_{\mathbb{H}} &\leqslant  \|T(t)\Phi_0\|_{\mathbb{H}} + \int_0^t \left\|T(t-\sigma)\cF(
	\Psi)(\sigma)\right\|_{\mathbb{H}}d\sigma \nonumber \\ & \leqslant M_0\left(\|\Phi_0\|_{\mathbb{H}} + \int_0^t e^{-\omega_0 (t-\sigma)}\|\cF(\Psi)(\sigma)\|_{\mathbb{H}_1} d\sigma\right)\nonumber \\ &\lesssim M_0  \|\Phi_0\|_{\mathbb{H}} + \dfrac{M_0 C_{\omega_0}}{\tau} \sup\limits_{t \in [0,T] }\tau\|\cF(\Psi)(t)\|_{\mathbb{H}_1} \nonumber \\ &\lesssim \rho_{\beta} + \left(\beta^2 + \beta^{3/2}\sqrt{r+1}\right) < \beta,
	\end{align} provided $\beta$ and $\rho_{\beta}< 1/2 \beta $ are sufficiently small. 
\end{proof}

We are then ready to prove that for a (possibly smaller) $\beta$, the map $\Theta$ is a contraction. \begin{lemma}There exist $\beta >0$ and $\rho_{\beta} > 0$ with the property that if $\|\Phi_0\|_{\mathbb{H}} < \rho_\beta$ then $\Theta$ is a contraction.\end{lemma}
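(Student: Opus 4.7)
The strategy is to reduce the contraction inequality for $\Theta$ to a Lipschitz estimate for the nonlinearity $\cF$ in $\mathbb{H}_1$, taking full advantage of the fact that elements of $X^\beta$ are small in the weaker topology $\mathbb{H}$, even though they may be merely bounded in $\mathbb{H}_1$. Given $\Psi_1,\Psi_2 \in X^\beta$ with components $(w^i,w_t^i,w_{tt}^i)$, the variation of parameters formula \eqref{varpar} gives
\begin{equation*}
\Theta(\Psi_1)(t)-\Theta(\Psi_2)(t) = \int_0^t T(t-\sigma)\bigl[\cF(\Psi_1)(\sigma)-\cF(\Psi_2)(\sigma)\bigr]\,d\sigma,
\end{equation*}
so that the exponential bound \eqref{exp} on $\mathbb{H}_1$ yields
\begin{equation*}
\|\Theta(\Psi_1)-\Theta(\Psi_2)\|_{X^\beta} \;\leqslant\; \frac{M_1}{\omega_1}\,\sup_{t\in[0,T]}\|\cF(\Psi_1)(t)-\cF(\Psi_2)(t)\|_{\mathbb{H}_1}.
\end{equation*}
Thus the task reduces to bounding the $L^2(\Omega)$--norm of the only nontrivial component $(w^1_t)^2-(w^2_t)^2+w^1 w^1_{tt}-w^2 w^2_{tt}$ by a small multiple of $\|\Psi_1-\Psi_2\|_{X^\beta}$.

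The plan for the Lipschitz bound is to use the standard factorizations
\begin{equation*}
(w^1_t)^2-(w^2_t)^2=(w^1_t+w^2_t)(w^1_t-w^2_t),\qquad w^1 w^1_{tt}-w^2 w^2_{tt}=(w^1-w^2)w^1_{tt}+w^2(w^1_{tt}-w^2_{tt}),
\end{equation*}
and then apply exactly the same interpolation/Sobolev inequalities that already appeared in the proof of Lemma \ref{fest}. Concretely, $\|\varphi\|_4\lesssim\|\varphi\|_2^{1/2}\|\varphi\|_{H^1}^{1/2}$ gives
\begin{equation*}
\|(w^1_t)^2-(w^2_t)^2\|_2 \;\lesssim\; \|\Psi_1+\Psi_2\|_{\mathbb{H}}\,\|\Psi_1-\Psi_2\|_{\mathbb{H}} \;\lesssim\; \beta\,\|\Psi_1-\Psi_2\|_{X^\beta},
\end{equation*}
while $\|\varphi\|_\infty\lesssim\|\varphi\|_{H^1}^{1/2}\|\varphi\|_{H^2}^{1/2}$ together with $\|w^i_{tt}\|_2\lesssim\|\Psi_i\|_{\mathbb{H}}\leqslant\beta$ gives, for the first cross term,
\begin{equation*}
\|(w^1-w^2)w^1_{tt}\|_2 \;\lesssim\; \|\Psi_1-\Psi_2\|_{\mathbb{H}}^{1/2}\|\Psi_1-\Psi_2\|_{\mathbb{H}_1}^{1/2}\,\beta \;\lesssim\; \beta\,\|\Psi_1-\Psi_2\|_{X^\beta}
\end{equation*}
(after Young's inequality, since $\|\Psi_1-\Psi_2\|_{\mathbb{H}}\leqslant\|\Psi_1-\Psi_2\|_{\mathbb{H}_1}$). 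For the second cross term, the same interpolation applied to $w^2$ yields
\begin{equation*}
\|w^2(w^1_{tt}-w^2_{tt})\|_2 \;\lesssim\; \|\Psi_2\|_{\mathbb{H}}^{1/2}\|\Psi_2\|_{\mathbb{H}_1}^{1/2}\,\|\Psi_1-\Psi_2\|_{\mathbb{H}} \;\lesssim\; \beta^{1/2}(r+1)^{1/2}\,\|\Psi_1-\Psi_2\|_{X^\beta}.
\end{equation*}

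Combining these three estimates produces a Lipschitz constant for $\cF$ of the form $C\bigl(\beta+\beta^{1/2}\sqrt{r+1}\bigr)$, and therefore
\begin{equation*}
\|\Theta(\Psi_1)-\Theta(\Psi_2)\|_{X^\beta} \;\leqslant\; \frac{CM_1}{\tau\omega_1}\bigl(\beta+\beta^{1/2}\sqrt{r+1}\bigr)\,\|\Psi_1-\Psi_2\|_{X^\beta}.
\end{equation*}
It now suffices to take $\beta>0$ small enough, depending on the fixed size $r$ of the $\mathbb{H}_1$--ball, so that this constant is strictly less than $1$; the associated $\rho_\beta$ from the preceding invariance lemma controls the $\mathbb{H}$--size of $\Phi_0$ needed to keep $\Theta$ mapping $X^\beta$ into itself. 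The main obstacle, and the reason smallness in $\mathbb{H}$ (rather than in $\mathbb{H}_1$) is indispensable, is the cross term $w^2(w^1_{tt}-w^2_{tt})$: the factor $\|w^2\|_\infty$ forces us to go through $\|\Psi_2\|_{\mathbb{H}_1}^{1/2}$, which is only $O(\sqrt{r+1})$, and the only way to tame this without restricting the $\mathbb{H}_1$ size of the data is to pair it with the $\beta^{1/2}$ coming from $\|\Psi_2\|_{\mathbb{H}}^{1/2}$. Once contraction is established, the Banach fixed point theorem in the complete metric space $X^\beta$ delivers the unique mild solution promised by Theorem \ref{nsem}, and the argument extends globally in $T$ because the constants $M_1$, $\omega_1$, and the bounds above are $T$--independent.
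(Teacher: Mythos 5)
Your proof is correct and follows essentially the same route as the paper: the same three-term factorization $I_1+I_2+I_3$ of the nonlinear difference, the same Sobolev/interpolation estimates (with the decisive $\|w\|_\infty\lesssim\|\nabla w\|_2^{1/2}\|\Delta w\|_2^{1/2}$ pairing $\beta^{1/2}$ against $\sqrt{r+1}$ in the hard cross term), and the same reduction via the variation-of-parameters formula and the exponential bound on $T(t)$ to a contraction constant of order $\beta+\beta^{1/2}\sqrt{r+1}$. The only cosmetic difference is that for the term $(w^1-w^2)w^1_{tt}$ the paper uses the plain embedding $\|v-w\|_\infty\lesssim\|\Delta(v-w)\|_2$ rather than interpolation, which changes nothing.
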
 \begin{proof}
	Let $\Psi_1, \Psi_2 \in X^\beta$, $\Psi_1 = (v,v_t,v_{tt})^\top$ and $\Psi_2 = (w,w_t,w_{tt})^\top.$ The key point of this proof is to estimate $\|\cF(\Psi_1)-\cF(\Psi_2)\|_{C([0,T];\mathbb{H}_1)},$ which is where we start. First notice that, since the first two coordinates of both $\cF(\Psi_1)$ and $\cF(\Psi_2)$ are zero, we just care about the third one, whose difference, for each t, is given by \begin{align}
	 v_t^2 + vv_{tt} - w_t^2 - ww_{tt}  &= \underbrace{(v_t+w_t)(v_t-w_t)}_{=I_1(t)} + \underbrace{(v-w)v_{tt}}_{=I_2(t)} + \underbrace{w(v_{tt}-w_{tt})}_{=I_3(t)} \nonumber \\ &= I_1(t) + I_2(t) + I_3(t).
	\end{align}
	Now we estimate the supremmum of the $L^2$-norm of $I_1$. For this we notice that a combination of Holder's Inequality with the Sobolev Embedding $H_{\Gamma_1}^2(\Omega) \hookrightarrow L^4(\Omega)$ yields \begin{align*}
	\|I_1(t)\|_2 &= \left\|({v}_t+{w}_t)({v}_t-{w}_t)\right\|_{2} \leqslant \left(\|{v}_t\|_4+\|{w}_t\|_4\right)\|{v}_t-{w}_t\|_4 \nonumber \\ & \lesssim \left(\|\nabla{v}_t\|_2+\|\nabla{w}_t\|_2\right)\left\|\nabla({v}_t-{w}_t)\right\|_2 \lesssim \beta \|\Psi_1-\Psi_2\|_{X^\beta},
	\end{align*} for each $t$. Then $\sup\limits_{t \in [0,T]} \|I_1(t)\|_2 \leqslant \beta \|\Psi_1 - \Psi_2\|_{X^\beta}.$ Next, for estimating the suppremum of the $L^2$--norm of $I_2$ we notice that the sobolev embedding $H_{\Gamma_1}^2(\Gamma) \hookrightarrow L^\infty(\Omega)$ yields \begin{align*}
	\|I_2(t)\|_2 &= \|v_{tt}(v-w)\|_2 \leqslant \|v_{tt}\|_2 \|v-w\|_\infty \\ &\lesssim \|v_{tt}\|_2\|\Delta(v-w)\|_2 \lesssim \beta \|\Psi_1 - \Psi_2\|_{X^\beta},
	\end{align*} for each $t \in [0,T).$ Then $\sup\limits_{t \in [0,T]} \|I_2(t)\|_2 \leqslant \beta \|\Psi_1 - \Psi_2\|_{X^\beta}.$ Finally, for estimating the supremum of the $L^2$--norm of $I_3$ we will use the (further to the Sobolev emdedding $H_{\Gamma_1}^2(\Omega) \hookrightarrow L^\infty(\Omega)$) the interpolation inequality $\|w\|_\infty \lesssim \|\nabla w\|_2^{1/2}\|\Delta w\|_2^{1/2}$ which holds for all $w \in H_{\Gamma_1}^2(\Omega).$ We have \begin{align*}
	\|I_3(t)\|_2 &= \|w(v_{tt}-w_{tt})\|_2 \leqslant \|w\|_\infty \|v_{tt}-w_{tt}\|_2 \\ &\lesssim \|\nabla w\|_2^{1/2}\|\Delta w\|_2^{1/2}\|v_{tt}-w_{tt}\|_2 \\ &\lesssim \beta^{1/2} \sqrt{\sup\limits_{t \in [0,T]}\|\Psi_2(t)\|_{\mathbb{H}_1}} \ \|\Psi_1 - \Psi_2\|_{X^\beta} \lesssim \beta^{1/2} \sqrt{r+1} \|\Psi_1 - \Psi_2\|_{X^\beta},
	\end{align*} for each $t \in [0,T).$ Then $\sup\limits_{t \in [0,T]} \|I_3(t)\|_2 \lesssim \beta^{1/2} \sqrt{r+1} \|\Psi_1 - \Psi_2\|_{X^\beta}.$
	
	\vspace{.2cm}
	
	Therefore, the proof for contractivity goes as follows: \begin{align}
	\label{sme2} \|\Theta(\Psi_1) - \Theta(\Psi_2)\|_{_{X^\beta}}  &=\sup_{t \in [0,T]}\left\|\int_0^t  T(t-\sigma) \left[\cF(\Psi_1)-\cF(\Psi_2)\right]d\sigma\right\|_{\mathbb{H}_1}\nonumber \\
	&\leqslant \dfrac{C_{\omega_1}}{\tau} \sup_{t \in [0,T]} \tau\left\|\cF(\Psi_1)(t)-\cF(\Psi)(t)\right\|_{\mathbb{H}_1} \nonumber \\ &\lesssim \sup\limits_{t \in [0,T]} \left(\|I_1(t)\|_2 + \|I_2(t)\|_2 + \|I_3(t)\|_2\right) \nonumber\\ & \lesssim \left(2\beta + \beta^{1/2}\sqrt{r+1}\right) \|\Psi_1 - \Psi_2\|_{X^\beta} = C_{\beta} \|\Psi_1 - \Psi_2\|_{X^\beta}\end{align} owning the property $C_{\beta} < 1$ to the smallness of $\beta$.
\end{proof}

\
Notice that exponential stability of the linear problem in $\mathbb{H}$ and $\mathbb{H}_1$ allows we to obtain the estimates \eqref{invar}, \eqref{invar2} and \eqref{sme2} with right hand side time--independent. This allows us to take $T = \infty$ is all of them and repeat the same construction to obtain a fixed--point of $\Theta$ defined in the whole $\n{R}_+.$

This completes the proof of theorem \ref{nsem} by taking $\rho = \rho_\beta$.

\section{Proof of Theorem  \ref{expnonl}-Uniform Nonlinear Stability}\label{refext2}

In this section we show that one can easily show that the solution of the nonlinear problem decay exponentially to zero as $t\to \infty$ by taking advantage of three facts established in this paper. \begin{itemize}
	\item[\bf (i)] The fact that the solution is a fixed point of the map $\Theta$ defined in \eqref{varpar}, and therefore can be implicitly represented as \begin{equation}
	\label{easyv}\Phi(t) = T(t)\Phi_0 + \int_0^tT(t-\sigma)\cF(\Phi)(\sigma)d\sigma
	\end{equation}
	
	\item[\bf (ii)] The fact that our existence of global solution result requires smallness of initial data \emph{only} in the a lower topology and the use of this along with interpolation inequalities allowed us to obtain the key estimate \eqref{contcF}.
	
	\item[\bf (ii)] The fact that the semigroup $T(t)$ in \eqref{easyv} is uniformly exponentially stable in both $\mathbb{H}$ and $\mathbb{H}_1$.
\end{itemize}

The final result of this section is the following. \begin{theorem}\label{thmfinexp}
	There exists $\rho > 0$ such that the solution $\Phi$ constructed in \eqref{nsem} is such that \begin{equation}
	\label{expd} \|\Phi(t)\|_{\mathbb{H}_1} \leqslant 2M_1e^{-\frac{\omega_1}{2} t}\|\Phi_0\|_{\mathbb{H}_1}
	\end{equation}for all $t \geqslant 0$, where $M_1, \omega_1$ are the constants involved in the uniform stability of the linear semigroup $T(t).$
\end{theorem}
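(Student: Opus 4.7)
The plan is to exploit three ingredients already established in the paper: the variation-of-parameters representation of the nonlinear fixed point, the "smallness-driven" nonlinear estimate \eqref{contcF}, and the exponential decay of the linear semigroup $T(t)$ on $\mathbb{H}_1$ from Theorem \ref{thm34n}. The whole point is that the smallness of $\Phi_0$ in the lower topology $\mathbb{H}$, which is preserved along the trajectory by Theorem \ref{nsem}, keeps the nonlinear forcing term subordinate to the linear decay. No energy identity or barrier/bootstrap is needed at the $\mathbb{H}_1$-level; a plain Gronwall argument suffices.

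First I would recall that, since $\Phi$ is the fixed point of $\Theta$ constructed in Theorem \ref{nsem}, it satisfies
$$\Phi(t) = T(t)\Phi_0 + \int_0^t T(t-\sigma)\cF(\Phi)(\sigma)\,d\sigma, \qquad t\geqslant 0.$$
Fixing $\beta$ as in the proof of Theorem \ref{nsem} and choosing $\rho \leqslant \rho_\beta$ so that $\Phi_0 \in \mathbb{H}^\rho$ forces $\|\Phi(t)\|_{\mathbb{H}} < \beta$ for every $t \geqslant 0$, Lemma \ref{fest}\,\textbf{(i)} yields the pointwise bound
$$\|\cF(\Phi)(\sigma)\|_{\mathbb{H}_1} \leqslant \frac{C\beta}{\tau}\|\Phi(\sigma)\|_{\mathbb{H}_1}, \qquad \sigma \geqslant 0.$$

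Applying the exponential decay $\|T(t)\|_{\mathcal{L}(\mathbb{H}_1)} \leqslant M_1 e^{-\omega_1 t}$ to the variation-of-parameters formula, together with the bound above, gives
$$\|\Phi(t)\|_{\mathbb{H}_1} \leqslant M_1 e^{-\omega_1 t}\|\Phi_0\|_{\mathbb{H}_1} + \frac{M_1 C\beta}{\tau}\int_0^t e^{-\omega_1(t-\sigma)}\|\Phi(\sigma)\|_{\mathbb{H}_1}\,d\sigma.$$
Setting $\psi(t) := e^{\omega_1 t}\|\Phi(t)\|_{\mathbb{H}_1}$ reduces the above to the standard Gronwall form
$$\psi(t) \leqslant M_1\|\Phi_0\|_{\mathbb{H}_1} + \frac{M_1 C\beta}{\tau}\int_0^t \psi(\sigma)\,d\sigma,$$
so Gronwall's inequality yields
$$\|\Phi(t)\|_{\mathbb{H}_1} \leqslant M_1 \|\Phi_0\|_{\mathbb{H}_1}\exp\Bigl(-\bigl(\omega_1 - \tfrac{M_1 C\beta}{\tau}\bigr)t\Bigr).$$

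To conclude, I would shrink $\rho$ once more, if necessary, so that the resulting $\beta$ satisfies $\frac{M_1 C\beta}{\tau}\leqslant \frac{\omega_1}{2}$. With this final choice, the decay exponent is at least $\omega_1/2$ and the prefactor $M_1$ is trivially bounded by $2M_1$, giving exactly \eqref{expd}. The only delicate point is the interplay of two smallness constraints on $\rho$: the one from Theorem \ref{nsem} that forces trajectories to remain in $\mathbb{H}^\beta$, and the one forcing the Gronwall exponent to be at least $\omega_1/2$; both are accommodated by a single sufficiently small $\rho$. What this argument avoids, compared to classical barrier or higher-level energy approaches, is any compactness/uniqueness step at the $\mathbb{H}_1$-level: the work is done entirely by the interpolation estimate \eqref{contcF}, which isolates $\|\Phi\|_{\mathbb{H}}$ as the "small" factor multiplying $\|\Phi\|_{\mathbb{H}_1}$, and which incidentally also makes transparent the continuity $\underline\omega(\varepsilon) \to \omega_1$ as $\varepsilon \to 0^+$ pointed out in the discussion.
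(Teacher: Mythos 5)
Your argument is correct and follows essentially the same route as the paper: the fixed-point representation \eqref{easyv}, the pointwise bound \eqref{contcF} (valid because Theorem \ref{nsem} keeps $\|\Phi(t)\|_{\mathbb{H}}<\beta$ for all $t$), the linear decay \eqref{exp}, and a Gronwall closure under the same smallness condition $\tfrac{M_1C\beta}{\tau}\leqslant\tfrac{\omega_1}{2}$ as the paper's \eqref{betasmall}. The only (cosmetic) difference is that you substitute $\psi(t)=e^{\omega_1 t}\|\Phi(t)\|_{\mathbb{H}_1}$ to reduce to the classical Gronwall inequality, whereas the paper applies the Gronwall--Beesack form directly; both yield the identical bound $M_1\|\Phi_0\|_{\mathbb{H}_1}\exp\{(\tfrac{M_1C\beta}{\tau}-\omega_1)t\}\leqslant 2M_1e^{-\omega_1 t/2}\|\Phi_0\|_{\mathbb{H}_1}$.
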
 The proof of this result relies heavily on the facts (i)--(ii) outlined above and a Grownwall type inequality. This inequality seems to have been originally introduced in \cite{beesack_CUDM_1975}, but here we are using \cite[Corollary 1, p. 389]{beesack_JMAA_1984}. We state the inequality here for convenience, but in a version which is suitable for our use in what follows. We invite the reader to consult \cite{beesack_CUDM_1975,beesack_JMAA_1984} and references therein for more details \begin{lemma}[Grownwall--Beesack Inequality] 
  Let $u,f,g,h:\BR \to \BR$ measurable functions such that $fh,gh$ and $uh$ are integrable. If $u,f,g,h$ are nonnegative and \begin{equation}
\label{u1} u(t) \leqslant f(t) + g(t)\int_0^t h(\sigma)u(\sigma)d\sigma
\end{equation} then \begin{equation}
\label{u2} u(t) \leqslant f(t) + g(t) \int_0^t f(\sigma)h(\sigma) \exp\left\{\int_\sigma^t g(s)h(s)ds\right\}d\sigma
\end{equation}
\end{lemma}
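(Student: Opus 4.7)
The plan is to reduce the integral inequality to a first-order linear differential inequality via an auxiliary function and then solve it with a standard integrating-factor argument. Concretely, set
\[
v(t) := \int_0^t h(\sigma)u(\sigma)\,d\sigma,
\]
so that $v(0) = 0$ and $v$ is absolutely continuous with $v'(t) = h(t)u(t)$ a.e., thanks to the integrability assumptions on $hu$. Since all functions are nonnegative, the hypothesis \eqref{u1} can be multiplied by $h(t)$ to yield
\[
v'(t) \;=\; h(t)u(t) \;\leqslant\; h(t)f(t) + g(t)h(t)v(t),
\]
which is the desired linear differential inequality $v'(t) - g(t)h(t)v(t) \leqslant h(t)f(t)$.

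Next I would introduce the integrating factor $\mu(t) := \exp\!\left\{-\int_0^t g(s)h(s)\,ds\right\}$, which is well defined and bounded on compact intervals because $gh$ is integrable. Multiplying the inequality by $\mu(t)$ gives $(\mu v)'(t) \leqslant \mu(t)h(t)f(t)$ a.e., and integrating from $0$ to $t$ (using $v(0)=0$) produces
\[
\mu(t)v(t) \;\leqslant\; \int_0^t \mu(\sigma)h(\sigma)f(\sigma)\,d\sigma.
\]
Dividing through by $\mu(t) > 0$ and recognising $\mu(\sigma)/\mu(t) = \exp\!\left\{\int_\sigma^t g(s)h(s)\,ds\right\}$ gives
\[
v(t) \;\leqslant\; \int_0^t f(\sigma)h(\sigma)\exp\!\left\{\int_\sigma^t g(s)h(s)\,ds\right\}d\sigma.
\]
Plugging this upper bound for $v(t)$ back into \eqref{u1} produces exactly \eqref{u2}.

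The only genuinely delicate point is justifying the derivative step in the measure-theoretic setting: $v$ is absolutely continuous on any interval where $hu$ is integrable, so $v'$ exists a.e., and $(\mu v)'$ is likewise well-defined a.e., which is enough to integrate the inequality. The nonnegativity of $u,f,g,h$ is essential both for multiplying \eqref{u1} by $h$ without flipping the inequality and for keeping the exponential of the (nonnegative) integral of $gh$ well-defined and monotone in the right direction. A tiny variant of the same argument by successive approximation $v_n := v + 1/n$ would bypass any issue if one insisted on strict positivity in an intermediate step, but it is not strictly necessary. Overall, the main obstacle is not analytical but bookkeeping: keeping track of which variable of integration is inside the exponent, which is handled cleanly by the integrating factor $\mu$.
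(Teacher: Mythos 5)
Your proof is correct. Note, however, that the paper does not prove this lemma at all: it is quoted verbatim from the literature (Beesack, with the authors pointing to \cite[Corollary 1, p.~389]{beesack_JMAA_1984}), so there is no in-paper argument to compare against. What you supply is the standard self-contained derivation: setting $v(t)=\int_0^t h\,u$, observing $v'=hu\leqslant hf+ghv$ a.e.\ by nonnegativity of $h$, applying the integrating factor $\mu(t)=\exp\{-\int_0^t gh\}$, integrating $(\mu v)'\leqslant \mu hf$ with $v(0)=0$, and substituting the resulting bound on $v$ back into the hypothesis. Each step is justified: $v$ is absolutely continuous because $uh$ is integrable, $\mu$ is absolutely continuous and bounded away from $0$ on compact intervals because $gh$ is integrable, and the product rule for absolutely continuous functions makes the integration of $(\mu v)'$ legitimate. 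The identity $\mu(\sigma)/\mu(t)=\exp\{\int_\sigma^t gh\}$ recovers exactly the kernel in the conclusion. The only cosmetic caveat is that the lemma as stated allows $t\in\BR$, while your argument is phrased for $t\geqslant 0$; this is harmless since the application in the paper only uses $t\geqslant 0$, and the same computation works for $t<0$ with the obvious sign conventions.
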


\begin{proof}[\bf Proof of Theorem \ref{thmfinexp}.] We use the same constants as in \eqref{exp}, that is, we use that \begin{equation}\label{u3}\|T(t)\|_{\calL(\mathbb{H}_1)} \leqslant M_1e^{-\omega_1 t}\end{equation} for all $t.$ Moreover, we know that the solution $\Phi$ exists in some $X^\beta$ for $\beta > 0$ small and that the whole argument of the proof for the existence of global solution would still be true if one decreased $\beta.$ Therefore, by possibly taking it smaller, we assume \begin{equation}\label{betasmall}\beta = \beta(\tau) < \dfrac{\tau\omega_1}{2M_1 C}\end{equation} 
 where $\omega_1 = \omega_1(\tau)$ is the rate of exponential decay of the semigroup $T(t)$ in $\mathbb{H}_1$ for a fixed $\tau > 0.$ As in the proof of global wellposedness, we take $\rho = \rho_\beta.$ We then compute, via \eqref{u3} and \eqref{contcF}  \begin{align*}
\|\Phi(t)\|_{\mathbb{H}_1} &\leqslant \|T(t)\Phi_0\|_{\mathbb{H}_1} + \int_0^t\|T(t-\sigma)\cF(\Phi)(\sigma)\|_{\mathbb{H}_1}d\sigma \\ &\leqslant M_1e^{-\omega_1 t} \|\Phi_0\|_{\mathbb{H}_1} + \int_0^t M_1e^{-\omega_1 (t-\sigma)}\|\cF(\Phi)(\sigma)\|_{\mathbb{H}_1}d\sigma \\ &\leqslant M_1e^{-\omega_1 t} \|\Phi_0\|_{\mathbb{H}_1} + \dfrac{M_1C\beta}{\tau}e^{-\omega_1 t}\int_0^t e^{\omega_1 \sigma}\|\Phi(\sigma)\|_{\mathbb{H}_1}d\sigma.
\end{align*} We then apply the Grownwall--Beesack inequality with $$u(t) = \|\Phi(t)\|_{\mathbb{H}_1}, \qquad f(t) =  M_1e^{-\omega_1 t} \|\Phi_0\|_{\mathbb{H}_1}, \qquad g(t) = \dfrac{M_1C\beta}{\tau}e^{-\omega_1 t}, \qquad h(t) = e^{\omega_1 t}$$ to obtain \begin{align}
\|\Phi(t)\|_{\mathbb{H}_1} &\leqslant M_1e^{-\omega_1 t} \|\Phi_0\|_{\mathbb{H}_1} + \dfrac{M_1^2C\beta\|\Phi_0\|_{\mathbb{H}_1}}{\tau}e^{-\omega_1 t}\int_0^t \exp\left\{\dfrac{M_1 C \beta}{\tau}(t-\sigma)\right\}d\sigma \nonumber \\ &= M_1e^{-\omega_1 t} \|\Phi_0\|_{\mathbb{H}_1} + M_1\|\Phi_0\|_{\mathbb{H}_1}
\exp\left\{\left(\dfrac{M_1 C \beta}{\tau}-\omega_1\right)t\right\}
\left(1-\exp\left\{-\dfrac{M_1 C \beta}{\tau}t\right\}\right) 
\nonumber \\ &\leqslant M_1e^{-\omega_1 t} \|\Phi_0\|_{\mathbb{H}_1} + M_1\|\Phi_0\|_{\mathbb{H}_1}
\exp\left\{\left(\dfrac{M_1 C \beta}{\tau}-\omega_1\right)t\right\} \leqslant 2M_1e^{-\frac{\omega_1}{2} t} \|\Phi_0\|_{\mathbb{H}_1}, \label{last}
\end{align} and we observe that due to \eqref{betasmall} we have $$\dfrac{M_1 C \beta}{\tau}-\omega_1 < -\dfrac{\omega_1}{2} <  0.$$ The proof is complete.

\end{proof}

\begin{corollary}\label{corexp} With reference to Section 5, let $\beta_0$ be the largest  number such that the map $\Theta$ has a fixed point in $X^{\beta_0}$ which is, moreover, uniformly exponentially stable as in Theorem \ref{expnonl}. Let $\omega: (0,\beta_0] \to \BR_+$ be the function that maps each $\beta>0$ to the decay rate $\omega(\beta)$. Then there exists another function $\underline \omega: (0,\beta_0] \to \BR_+$ such that $\omega(\beta) \geqslant \underline\omega(\beta)$ for all feasible $\beta$ and \begin{equation}
	\label{limexp} \lim\limits_{\beta \to 0} \underline\omega(\beta) = \omega_1,
	\end{equation} where $\omega_1$ is the decay rate of the linear semigroup $T(t).$
\end{corollary}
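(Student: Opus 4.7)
The plan is to read the decay rate directly off the Beesack--Gronwall estimate produced in the proof of Theorem \ref{thmfinexp} and exhibit $\underline{\omega}(\beta)$ explicitly as an affine function of $\beta$ whose limit at $0$ is precisely $\omega_1$.

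First, inspect the penultimate line of \eqref{last}. Before the final (lossy) bound by $2M_1 e^{-\omega_1 t/2}\|\Phi_0\|_{\mathbb{H}_1}$, one actually has
\begin{equation*}
\|\Phi(t)\|_{\mathbb{H}_1} \leqslant M_1\|\Phi_0\|_{\mathbb{H}_1}\left[e^{-\omega_1 t} + \exp\Bigl\{\Bigl(\tfrac{M_1 C \beta}{\tau}-\omega_1\Bigr)t\Bigr\}\right],
\end{equation*}
valid for every $\beta \in (0,\beta_0]$ for which the contraction/invariance argument of Section \ref{nonlinearsem} goes through (which is guaranteed by the definition of $\beta_0$). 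Since the second exponent dominates the first as soon as $M_1 C \beta / \tau > 0$, this naturally suggests the definition
\begin{equation*}
\underline{\omega}(\beta) \; := \; \omega_1 \, - \, \frac{M_1 C \beta}{\tau},
\end{equation*}
where $C$ is the fixed constant from estimate \eqref{contcF} and $M_1,\omega_1$ are the constants from the linear $\mathbb{H}_1$--decay in \eqref{exp}. Notice that $M_1,\omega_1,C,\tau$ do not depend on $\beta$.

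Second, with this choice it is immediate that $\|\Phi(t)\|_{\mathbb{H}_1} \leqslant 2M_1 \|\Phi_0\|_{\mathbb{H}_1}\, e^{-\underline{\omega}(\beta)\, t}$ for all $t \geqslant 0$, so the \emph{actual} decay rate $\omega(\beta)$ of the nonlinear flow starting from data of $\mathbb{H}$--size $\beta$ satisfies $\omega(\beta) \geqslant \underline{\omega}(\beta)$. The smallness condition \eqref{betasmall} ensures $\underline{\omega}(\beta) > \omega_1/2 > 0$, so $\underline{\omega}$ maps into $\BR_+$ as required. Finally, continuity of the affine map gives
\begin{equation*}
\lim_{\beta \to 0^+} \underline{\omega}(\beta) \; = \; \omega_1,
\end{equation*}
which is \eqref{limexp}.

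The only subtle point is that $\beta_0$ itself was defined as the largest $\beta$ for which both the fixed-point construction and the uniform exponential stability argument succeed; this is precisely the same smallness threshold \eqref{betasmall} exploited in Theorem \ref{thmfinexp}, so $\underline{\omega}$ is well-defined on all of $(0,\beta_0]$. No further obstacle arises: the corollary is essentially a bookkeeping consequence of retaining the $\beta$--dependence of the exponent in \eqref{last} instead of discarding it.
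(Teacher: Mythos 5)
Your proposal is correct and follows essentially the same route as the paper: the authors likewise define $\underline{\omega}(\beta) = \omega_1 - \dfrac{M_1 C \beta}{\tau}$ by reading the $\beta$--dependent exponent off the Beesack--Gronwall estimate \eqref{last} rather than discarding it, and conclude \eqref{limexp} by letting $\beta \to 0^+$. Your additional remarks on positivity via \eqref{betasmall} and on the well-definedness of $\underline{\omega}$ on $(0,\beta_0]$ are consistent with, and slightly more explicit than, the paper's one-line argument.
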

\begin{proof}[\bf Proof of Corollary \ref{corexp}] The proof of Theorem \ref{expnonl} already provides a proof of Corollary \ref{corexp}. Indeed, it suffices to define $\underline\omega:(0,\beta_0] \to \BR_+$ by $$\underline\omega(\beta) = \omega_1 - \dfrac{M_1 C \beta}{\tau}>0.$$
\end{proof}

\ifdefined\xxxxxxx

\section{Nonlinear Stabilization -- Hard Version}\label{refext}

Since global solutions for the nonlinear problem were constructed, nonlinear stability follows from a careful improvement of linear energy estimates. The most critical ingredient for this section the fact that lower order terms can be controlled by the damping of the system. We state this as a Lemma here, and the (technical) proof will be provided in the next section. \begin{proposition}\label{compuniq2}
	For $T>0$ there exists a constant $ C_T > 0 $ such that  the following inequality holds:
	\begin{equation}\label{abslo}
	lot_\delta(z)
	\leq C_T\int_0^T D_{\Psi}(s)ds 
	\end{equation}
\end{proposition}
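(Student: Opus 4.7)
The plan is to adapt the compactness--uniqueness scheme of Proposition \ref{compuniq} to the semilinear setting, treating the nonlinearity $\cF(\Phi) = \tau^{-1}(u_t^2+uu_{tt})$ as a forcing term whose contribution is \emph{forced to vanish} in the contradiction limit by exploiting the smallness of $\|\Phi\|_{\mathbb{H}}$ guaranteed by Theorem \ref{nsem}.

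Argue by contradiction: if \eqref{abslo} fails for some $T>0$, there exists a sequence $\{\Phi_n\}\subset X^\beta$ of nonlinear mild solutions, with associated $(z_n,z_{n,t})$, normalized so that
\begin{equation*}
\|z_n\|_{L^2(0,T;H^{1-\delta}(\Omega))}^2+\|z_{n,t}\|_{L^2(0,T;H^{-\delta}(\Omega))}^2 \equiv 1,\qquad \int_0^T D_{\Psi_n}(s)\,ds \to 0.
\end{equation*}
Since each $\Phi_n(0)\in\mathbb{H}^\rho$, Theorem \ref{nsem} yields $\sup_n \|\Phi_n\|_{C([0,T];\mathbb{H}_1)} \lesssim r+1$ and $\sup_n \|\Phi_n\|_{C([0,T];\mathbb{H})} < \beta$, and Lemma \ref{fest} then provides the uniform bound $\|\cF(\Phi_n)\|_{L^\infty(0,T;L^2(\Omega))} \lesssim \tau^{-1}\beta(r+1)$. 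Energy identity \eqref{e1id} (with $f = \cF(\Phi_n)$) in turn gives uniform boundedness of $E_1^n(t)$ on $[0,T]$.

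Next, repeat the weak-$*$/Aubin--Lions extraction from the linear proof: pass to a (non-relabeled) subsequence with $z_n \rightharpoonup^* \zeta$ in $L^\infty(0,T;H^1(\Omega))$, $z_n\to \zeta$ strongly in $L^2(0,T;H^{1-\delta}(\Omega))$, and $\gamma^{1/2} u_{n,tt}\rightharpoonup \dot\eta$ in $L^2(Q)$; the vanishing of the damping forces $\dot\eta\equiv 0$ and $\zeta_t|_{\Sigma_1}\equiv 0$. The ODE $bu_{n,t}+c^2u_n = bz_n$ propagates strong convergence of $u_n$ in $C([0,T];H^{2-\varepsilon}(\Omega)) \hookrightarrow C([0,T];L^\infty(\Omega))$ in dimensions $d\le 3$, while $u_{n,t}$ is compact in $L^4(Q)$ by Aubin--Lions. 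These convergences, combined with weak convergence of $u_{n,tt}$ in $L^2(Q)$, identify the limit of the quadratic term $\cF(\Phi_n)$. Crucially, the interpolation bound from Lemma \ref{fest} gives $\|\cF(\Phi_n)\|_2 \lesssim \|\Phi_n\|_{\mathbb{H}}\|\Phi_n\|_{\mathbb{H}_1} \lesssim \beta(r+1)$, so the $\int \cF(\Phi_n)\,z_{n,t}\,d\Omega$ contribution to \eqref{e1id}, after Cauchy--Schwarz, is of size $\beta\int \|z_{n,t}\|_2^2\,dt$ and can be \emph{absorbed} into the left-hand side of \eqref{e1in1} by choosing $\rho$ (hence $\beta$) small enough. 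In the limit, $\zeta$ then satisfies the homogeneous overdetermined problem
\begin{equation*}
\zeta_{tt} = b\Delta \zeta \text{ in } Q,\qquad \partial_\nu\zeta|_{\Sigma_1} = \zeta_t|_{\Sigma_1} = 0,\qquad [\partial_\nu\zeta + \kappa_0\zeta]|_{\Sigma_0} = 0,
\end{equation*}
so the uniqueness argument already deployed in Proposition \ref{compuniq} forces $\zeta \equiv 0$, contradicting the normalization since $\|z_n\|_{L^2(0,T;H^{1-\delta})} \to \|\zeta\|_{L^2(0,T;H^{1-\delta})} = 0$.

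The main obstacle is the passage to the limit in the product $u_n u_{n,tt}$: because $u_{n,tt}$ converges only weakly in $L^2(Q)$, strong convergence of $u_n$ in a multiplier topology on $L^2$ is indispensable, and this is precisely where the dimension restriction $d\le 3$ and Aubin--Lions applied to $(u_n,u_{n,t})$ enter in an essential way. Equally important is the quadratic structural control $\|\cF(\Phi)\|_{\mathbb{H}_1} \lesssim \|\Phi\|_{\mathbb{H}}\|\Phi\|_{\mathbb{H}_1}$ already used in Theorem \ref{expnonl}: it is this estimate, together with the fact that smallness is imposed only at the $\mathbb{H}$-level (not at the $\mathbb{H}_1$-level), that allows the nonlinear contribution to be treated as a genuine perturbation and absorbed, closing the contradiction.
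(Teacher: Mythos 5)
Your plan transplants the compactness--uniqueness scheme of Proposition \ref{compuniq} / Lemma \ref{compunilem} (which is indeed the paper's proof of the identical linear statement) to the semilinear flow, but two steps that are harmless in the linear case break down here. First, the normalization. If \eqref{abslo} fails you only get a sequence of solutions with $lot_\delta(z_n) > n\int_0^T D_{\Psi_n}(s)\,ds$; to arrange $lot_\delta(z_n)\equiv 1$ you must rescale by $c_n := (lot_\delta(z_n))^{1/2}$. For the linear problem the rescaled functions solve the same equation, which is why the paper can simply posit the normalization. For the semilinear problem the rescaled $z$ solves the equation with forcing $\cF(\Phi_n)/c_n$, and since $\cF$ is quadratic and controlled only through $\|\Phi_n\|_{\mathbb{H}}\|\Phi_n\|_{\mathbb{H}_1}$ (Lemma \ref{fest}) --- norms that are in no way comparable to the weak quantity $c_n$ --- this rescaled forcing is unbounded whenever $lot_\delta(z_n)\to 0$. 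That case is not excluded by the contradiction hypothesis and your argument does not cover it.

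Second, and more seriously, the unique-continuation step. Absorbing the nonlinear contribution ``into the left-hand side of \eqref{e1in1}'' by taking $\rho$ small is an energy-inequality manipulation; it does not make the \emph{limit PDE} homogeneous. Your own Aubin--Lions analysis shows that $\cF(\Phi_n)$ converges strongly in $L^2(Q)$ to $F = \lim\,(u_{n,t}^2+u_nu_{n,tt})$, and there is no reason for $F$ to vanish: the trajectories are $\mathbb{H}$-small, not zero, and the vanishing of the damping only kills $\gamma^{1/2}u_{n,tt}$ and $z_{n,t}|_{\Gamma_1}$. The limit $\zeta$ therefore satisfies $\zeta_{tt}=b\Delta\zeta+F$ with overdetermined data on $\Sigma_1$, where $F$ is an external forcing not expressible through $\zeta$; overdetermined Cauchy data on a boundary portion does not force such a solution to vanish, so the contradiction does not close. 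It is worth noting that the paper never actually proves this proposition (the section containing it is compiled out), and its published route to nonlinear decay --- Section \ref{refext2}, via the mild-solution representation, the two-level linear exponential stability, the estimate \eqref{contcF} and the Gronwall--Beesack inequality --- is designed precisely to sidestep both obstructions you would need to resolve.
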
 

As a consequence of inequality \eqref{abslo}, uniform stability is established as we explain below.

\

As in the linear case, the first step for nonlinear stabilization is to study the properties of $E_1.$ Due to nonlinear terms being controlled only by higher energy, one cannot, in general, achieve exponential stability of $E_1$ alone, as was done in linear case. However,it paves the path for later higher level estimates. \begin{lemma} There exists a constant $C_T>0$ such that \begin{align}
	\label{enin} \int_0^T E_1(t)dt &\lesssim E_1(T) + \int_0^T E^{1/4}\E^{5/4}(t)dt\nonumber \\ &+ C_T\left[\int_0^T E^{1/2}(t)\E^{3/2}(t)dt + \int_0^T D_\Phi(t)dt \right].
	\end{align}
\end{lemma}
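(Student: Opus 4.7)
The plan is to regard the nonlinear problem exactly as the linear one of Section~\ref{secsta} with a driving term coming from the nonlinearity, and then control this forcing through sharp interpolation inequalities. After the change of variables $bz = bu_t + c^2 u$, the semilinear equation can be written as
\begin{equation*}
 z_{tt} + bA\bigl(z_t + N(\kappa_1 N^{*}Az)\bigr) + \gamma u_{tt} = f, \qquad f := 2k(u_t^2 + u\,u_{tt}),
\end{equation*}
so that every identity derived in Section~\ref{secsta} for the linear problem applies verbatim with this specific $f$. The structural part of the argument is then a transcription of the linear proof of Theorem~\ref{thm34}: applying the energy identity of Proposition~\ref{ene1id} on the end intervals $[0,s]$ and $[T-s,T]$ together with the multiplier estimate of Proposition~\ref{id} on the middle interval $[s,T-s]$, absorbing the lower order contribution $lot_\delta(z)$ via the nonlinear analogue Proposition~\ref{compuniq2}, and exploiting the relation between $E_1$ at times $0, s, T-s, T$ provided by \eqref{e1id}, one arrives at the schematic inequality
\begin{equation*}
 \int_0^T E_1(t)\,dt \;\lesssim\; E_1(T) + C_T\!\int_0^T D_\Psi(t)\,dt + C_T\!\int_Q f^2\,dQ + \int_Q |f\,z_t|\,dQ,
\end{equation*}
where the last term (without $C_T$) originates from the $\int f z_t$ contribution to the energy identity, and the penultimate one from the forcing contribution in Proposition~\ref{id}.

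The genuinely new work is the control of the two nonlinear integrals, which I would do exactly as in Lemma~\ref{fest}. For each fixed $t$,
\begin{equation*}
 \|f\|_2 \;\lesssim\; \|u_t\|_4^2 + \|u\|_\infty\|u_{tt}\|_2 \;\lesssim\; \|\p u_t\|_2^2 + \|\p u\|_2^{1/2}\|u\|_{H^2(\Omega)}^{1/2}\|u_{tt}\|_2 \;\lesssim\; E(t) + E^{3/4}(t)\E^{1/4}(t),
\end{equation*}
where we used the Sobolev embedding $H^1(\Omega)\hookrightarrow L^4(\Omega)$, the interpolation $\|u\|_\infty \lesssim \|u\|_{H^1}^{1/2}\|u\|_{H^2}^{1/2}$, and the definitions of $E$ and $\E$. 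Squaring and using $E\le\E$ to upgrade lower powers of $E$ to higher powers of $\E$ yields
\begin{equation*}
 \|f\|_2^2 \;\lesssim\; E^2 + E^{3/2}\E^{1/2} \;\lesssim\; E^{1/2}\E^{3/2},
\end{equation*}
while multiplying $\|f\|_2$ by $\|z_t\|_2 \lesssim E^{1/2}$ and once more invoking $E\le\E$ to dominate $E^{3/2}$ by $E^{1/4}\E^{5/4}$ produces
\begin{equation*}
 \|f\|_2\,\|z_t\|_2 \;\lesssim\; \bigl(E + E^{3/4}\E^{1/4}\bigr)\,E^{1/2} \;\lesssim\; E^{1/4}\E^{5/4}.
\end{equation*}
Integrating in time and substituting these bounds into the schematic inequality produces exactly the two right-hand-side integrands in the claimed estimate.

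The chief obstacle I anticipate is in reusing the tangential-trace estimate \eqref{tang1} of Lemma~\ref{bter}, since the microlocal lemma of \cite{lasiecka_lebiedzik_NA_2002} is invoked there for a \emph{linear} wave equation with forcing measured in $H^{-1/2+\delta}(Q)$. One must verify that the nonlinear forcing $f=2k(u_t^2+uu_{tt})$ still lies in that space with norm bounded by the same $E^{1/2}\E^{3/2}$ quantity --- but since $H^{-1/2+\delta}(Q)$ is strictly weaker than $L^2(Q)$ the interpolation chain above provides this bound at no cost. An equally delicate point is that the compactness--uniqueness step that absorbs $lot_\delta(z)$ for linear problems must be revisited in the nonlinear setting: this is precisely the content of Proposition~\ref{compuniq2}, whose proof constitutes the actual technical core of the passage from the linear to the nonlinear stability analysis.
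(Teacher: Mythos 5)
Your proposal is correct and follows essentially the same route as the paper: the linear machinery (energy identity on the end intervals, the multiplier estimate of Proposition~\ref{id} on $[s,T-s]$, absorption of $lot_\delta(z)$ via the nonlinear compactness--uniqueness proposition, and conversion of $E_1(0)+E_1(s)+E_1(T-s)$ into $E_1(T)$ through \eqref{e1id}) yields the same schematic inequality, and your interpolation bounds $\int_Q f^2\,dQ\lesssim\int_0^T E^{1/2}\E^{3/2}$ and $\int_Q|fz_t|\,dQ\lesssim\int_0^T E^{1/4}\E^{5/4}$ coincide with the paper's estimates \eqref{estf}--\eqref{estf2}. The only cosmetic difference is that you bound $\|f\|_2$ first and then square, whereas the paper estimates $\|f\|_2^2$ directly; the exponents agree after using $E\leqslant\E$.
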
\begin{proof}
	First, recall that for $f \in L^1(\BR_+,L^2(\Omega))$ we can modify identity \eqref{e1id} to obtain, for all $0 \leqslant \sigma \leqslant t \leqslant T$
	\begin{equation}\label{e1id2}
	E_1(t) + \int_\sigma^t D_\Psi(s)ds
	= E_1(\sigma) + \int_\sigma^t\int_\Omega fz_t d\Omega ds.
	\end{equation} Second, recall the general estimate  \eqref{e1in1}
	\begin{align}\label{e1in12}
	\int_s^{T-s} E_1(t) dt 
	&\lesssim [E_1(s)+E_1(T-s)] + 
	\nonumber \\ &C_T\left[\int_0^T D_\Psi(s)ds
	+\int_Q f^2 dQ + lot_\delta(z)\right].\end{align} which holds for $0<s<T/2$. 
	
	By using \eqref{e1in12}, we can get a full $L^1(0,T;\BR)$--norm of $E_1$. Indeed, for $s \leqslant 1/2$ small and identity \eqref{e1id2} (for $\sigma = 0$) we have
	\begin{align}\label{auxaux}
	\left(\int_0^s + \int_{T-s}^T\right) E_1(t) dt &\lesssim E_1(0) + \int_Q |fz_t|dQ 
	\end{align} 
	Then adding the above inequality with \eqref{e1in12} along with \eqref{abslo} we obtain \begin{align}\label{new1}
	\int_0^T\!\!\! E_1(t)dt &\lesssim E_1(0) \!+ E_1(s) + \!E_1(T-s) +\! \int_Q \!|fz_t|dQ + C_{T}\left[\int_0^T\!\! D_\Phi(s)ds + \int_Q\! f^2dQ\right]
	\end{align}and then using \eqref{e1id2} -- with $t = T$ and $\sigma = 0, s$ and $T-s$ respectively -- we infer that \begin{align}
	\label{w1} E_1(0) + E_1(s) + E_1(T-s) &= 3E_1(T) + \left(\int_0^T + \int_s^T + \int_{T-s}^T\right)\left[D_\Phi(t) - \int_\Omega fz_td\Omega dt
	\right] \nonumber \\ & \lesssim E_1(T) + \int_0^T D_\Phi(s)ds + \int_Q |fz_t|dQ
	\end{align} which improves \eqref{new1} to \begin{align}\label{new2}
	\int_0^T\!\!\! E_1(t)dt &\lesssim E_1(T) +\! \int_Q \!|fz_t|dQ + C_{T}\left[\int_0^T\!\! D_\Phi(s)ds + \int_Q\! f^2dQ\right]
	\end{align}
	
	We now estimate the nonlinear terms in \eqref{new2} by making $f = u_t^2 + uu_{tt}.$ We have two estimates:\begin{align}
	\int_Q f^2dQ &= \int_0^T \|u_t^2 + uu_{tt}\|_2^2dt \lesssim \int_0^T\left(\|u_t^2\|_2^2 + \|uu_{tt}\|_2^2\right)dt \nonumber \\ &\lesssim \int_0^T \left(\|\nabla u_t\|_2^4 + \|\nabla u\|_2\|\Delta u\|_2\|u_{tt}\|_2^2\right)dt \nonumber \\ &\lesssim \int_0^T E^{1/2}(t)\E^{3/2}(t)dt. \label{estf}
	\end{align} where, to obtain second inequality above, we used the embedding $H_{\Gamma_1}^1(\Omega) \hookrightarrow L^4(\Omega)$ followed by the embedding $H_{\Gamma_1}^2(\Omega) \hookrightarrow L^\infty(\Omega)$ and the interpolation inequality $$\|u\|_\infty^2 \leqslant \|u\|_{H^1}\|u\|_{H^2}$$ valid for all $u \in H^2(\Omega).$ The other estimate is \begin{align}
	\int_Q |fz_t|dQ &\leqslant \int_0^T \|z_t\|\|u_t^2 + uu_{tt}\|_2dt \lesssim \int_0^T E^{1/4}(t)\E^{5/4}(t)dt \label{estf2}
	\end{align}where we have used \eqref{estf} along with $\|z_t\|_2 \leqslant \E^{1/2}(t)$ for all $t.$ Then \eqref{new2} becomes \begin{align}
	\label{new3} \int_0^T E_1(t)dt &\lesssim E_1(T) + \int_0^T E^{1/4}\E^{5/4}(t)dt\nonumber \\ &+ C_T\left[\int_0^T E^{1/2}(t)\E^{3/2}(t)dt + \int_0^T D_\Phi(t)dt \right],
	\end{align} then \eqref{enin} follows. \end{proof}
	
	\begin{lemma}
		The inequality \begin{equation}
		\label{TINE} TE_1(T) \leqslant \int_0^T E_1(t)dt + C_T\int_0^T E^{1/4}(t)\E^{5/4}(t)dt
		\end{equation} holds for all $T>0$ arbitrary, but fixed.
	\end{lemma}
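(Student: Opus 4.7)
The plan is to start from the energy identity \eqref{e1id2} with forcing $f = u_t^2 + uu_{tt}$ (so that $\Psi=(u,z,z_t)$ is actually the nonlinear trajectory, viewed via \eqref{zprob} with right-hand side $-\gamma u_{tt}+f$). Specializing \eqref{e1id2} with $t=T$ and writing it as
$$E_1(T) = E_1(\sigma) - \int_\sigma^T D_\Psi(s)\,ds + \int_\sigma^T\!\!\int_\Omega f z_t\, d\Omega\,ds,$$
and using the nonnegativity of $D_\Psi$ (see \eqref{damp}), one immediately obtains the pointwise-in-$\sigma$ bound
$$E_1(T) \leqslant E_1(\sigma) + \left|\int_\sigma^T\!\!\int_\Omega f z_t\, d\Omega\,ds\right|, \qquad 0\leqslant \sigma\leqslant T.$$

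Next, I would integrate this inequality in $\sigma$ over $[0,T]$. The left-hand side becomes $T\,E_1(T)$, the first term on the right becomes $\int_0^T E_1(\sigma)\,d\sigma$, and for the nonlinear term I would use
$$\int_0^T\!\left|\int_\sigma^T\!\!\int_\Omega f z_t\, d\Omega\,ds\right|d\sigma \;\leqslant\; T\int_0^T\!\left|\int_\Omega f(t) z_t(t)\, d\Omega\right|dt,$$
by Fubini and monotonicity of the interval. Thus
$$T\,E_1(T) \;\leqslant\; \int_0^T E_1(t)\,dt \;+\; T\int_0^T\!\left|\int_\Omega f(t)\, z_t(t)\, d\Omega\right|dt.$$

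Finally, I would control the nonlinear forcing integral by invoking exactly the computation already carried out in \eqref{estf2}: using H\"older together with the embedding $H^1(\Omega)\hookrightarrow L^4(\Omega)$ on the $u_t^2$ contribution, and $H^2(\Omega)\hookrightarrow L^\infty(\Omega)$ together with the interpolation $\|u\|_\infty^2 \lesssim \|u\|_{H^1}\|u\|_{H^2}$ on the $uu_{tt}$ contribution, one has $\|f(t)\|_2 \lesssim E^{1/4}(t)\mathcal{E}^{3/4}(t)$, while $\|z_t(t)\|_2 \lesssim \mathcal{E}^{1/2}(t)$ directly from the definitions \eqref{E1}--\eqref{E2}. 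Multiplying gives
$$\left|\int_\Omega f(t)z_t(t)\, d\Omega\right| \;\lesssim\; E^{1/4}(t)\,\mathcal{E}^{5/4}(t),$$
so that absorbing $T$ into a new constant $C_T$ produces precisely \eqref{TINE}. There is no substantive obstacle here beyond bookkeeping; the one place to be careful is to make sure the interpolation bound for $\|f\|_2$ matches the $E^{1/4}\mathcal{E}^{5/4}$ exponents already used in \eqref{estf2}, which is why I would reuse that estimate rather than re-derive it.
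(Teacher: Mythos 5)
Your proposal is correct and is essentially the paper's own argument: the paper's proof is the one line ``It follows from integrating \eqref{e1id2} in time on $(0,T)$,'' and you have simply carried out that integration in $\sigma$, discarded the nonnegative damping, and reused the estimate \eqref{estf2} for the nonlinear forcing term, with the exponent bookkeeping $\|f\|_2\lesssim E^{1/4}\E^{3/4}$ and $\|z_t\|_2\lesssim\E^{1/2}$ matching the paper exactly.
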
\begin{proof}
	It follows from integrating \eqref{e1id2} in time on $(0,T).$
\end{proof}

	\begin{proposition}
		There exists $\eta = \eta(T) < 1$ and $M = M(\eta), \omega = \omega(\eta) > 0$ such that for $t \in [(m-1)T,mT)$ we have \begin{align}
		\label{stabmts} E_1(t) \leqslant M e^{-\omega t} E_1(0) + \sum\limits_{k=1}^m \eta^k\int_{(m-k)T}^{(m-k+1)T}\Upsilon(\sigma)d\sigma + \int_{mT}^{(m+1)T}\Upsilon(\sigma)d\sigma
		\end{align} where  \begin{equation}
		\label{up0s} \Upsilon(t) := E^{1/4}\E^{5/4}(t)+ E^{1/2}(t)\E^{3/2}(t).
		\end{equation} 
	\end{proposition}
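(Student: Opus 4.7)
The plan is to first establish a discrete step-decay inequality
\begin{equation*}
E_1(T) \le \eta(T)\, E_1(0) + C_T\int_0^T \Upsilon(\sigma)\,d\sigma,\qquad \eta(T)<1,
\end{equation*}
valid for $T$ sufficiently large, and then iterate it on intervals $[(m-1)T,mT]$ by invoking autonomy of the underlying dynamics. The final $t$-in-interval bound will come from a last application of the energy identity.

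For the step decay, I couple \eqref{enin} and \eqref{TINE} with the energy identity \eqref{e1id2} taken with $\sigma=0$, $t=T$, and $f=u_t^2+uu_{tt}$, which reads
\begin{equation*}
\int_0^T D_\Psi(s)\,ds = E_1(0) - E_1(T) + \int_0^T\!\!\int_\Omega f z_t\,d\Omega\,ds.
\end{equation*}
Using $\|z_t\|_2\le \E^{1/2}(t)$ together with the interpolation bounds that already produced \eqref{estf}--\eqref{estf2}, the last integral is dominated by $C\int_0^T \Upsilon(\sigma)\,d\sigma$. Substituting into \eqref{enin} eliminates $\int_0^T D_\Psi$ from its right-hand side, and inserting the resulting upper bound for $\int_0^T E_1$ into \eqref{TINE} yields
\begin{equation*}
(T-C_*)\,E_1(T) \le C_*\,E_1(0) + C_T\int_0^T \Upsilon(\sigma)\,d\sigma.
\end{equation*}
Fixing $T > C_*$ and dividing produces the step decay with $\eta(T)=C_*/(T-C_*)\in(0,1)$, and $\eta(T)$ can be made as small as we wish by enlarging $T$.

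By time-invariance of \eqref{usistz}, the same bound applied on the translated interval $[(m-1)T,mT]$ reads $E_1(mT)\le \eta\, E_1((m-1)T) + C_T \int_{(m-1)T}^{mT}\Upsilon$, and a telescoping induction in $m$ produces the dyadic form
\begin{equation*}
E_1(mT)\le \eta^m E_1(0) + C_T\sum_{k=1}^m \eta^{k-1}\int_{(m-k)T}^{(m-k+1)T}\Upsilon(\sigma)\,d\sigma.
\end{equation*}
For a generic $t\in[(m-1)T,mT)$ I use the energy identity one last time on $[(m-1)T,t]$ to control $E_1(t)$ by $E_1((m-1)T)$ plus $C\int_{(m-1)T}^{mT}\Upsilon$ (already covered by the $k=1$ term of the sum), and a symmetric bound running on $[t,(m+1)T]$ accounts for the additional $\int_{mT}^{(m+1)T}\Upsilon$ appearing in \eqref{stabmts}. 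Setting $\omega:=|\ln\eta|/T$ and $M:=\eta^{-1}$ then converts the prefactor $\eta^{m}$ into $M e^{-\omega t}$, yielding \eqref{stabmts}.

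The main obstacle is not producing \eqref{stabmts} itself, but making it useful: the $\Upsilon$-terms involve the higher energy $\E(t)$, which is \emph{not} dissipated by $D_\Psi$, so on its own \eqref{stabmts} does not imply exponential decay. Closing the loop -- that is, absorbing the $\Upsilon$-sum back into the data -- will require combining \eqref{stabmts} with the higher-order estimate \eqref{eq11} and exploiting the smallness of $\|\Phi_0\|_{\mathbb{H}}$ granted by Theorem \ref{nsem}, in the same spirit as the interpolation $\|\cF(\Phi)\|_{\mathbb{H}_1}\lesssim \|\Phi\|_{\mathbb{H}}\|\Phi\|_{\mathbb{H}_1}$ already exploited in the paper.
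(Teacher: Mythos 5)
Your argument is correct and follows essentially the same route as the paper: a one-step contraction $E_1(T)\leqslant \eta\,E_1(0)+C\int_0^T\Upsilon\,d\sigma$ obtained by combining the truncated observability estimate and the $TE_1(T)$ bound with the energy identity (used to trade the damping integral for $E_1(0)-E_1(T)$ plus $\Upsilon$-terms), followed by iteration over translated intervals via autonomy and the conversion $\eta^m\mapsto Me^{-\omega t}$ with $\omega=|\ln\eta|/T$, $M=\eta^{-1}$. The only cosmetic difference is that your telescoping produces a factor $C_T\eta^{k-1}$ in front of each $\Upsilon$-integral rather than the bare $\eta^k$ of \eqref{stabmts}; the paper arranges the one-step estimate so that the \emph{same} factor $\eta$ multiplies both $E_1(0)$ and the $\Upsilon$-integral, which yields the cleaner form, but this is a bookkeeping matter that does not affect the substance or the subsequent absorption argument.
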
\begin{proof}
	It follows from \eqref{TINE} along with \eqref{enin} that \begin{align}
	\label{enin5} TE_1(T) + \int_0^T E_1(t)dt &\leqslant CE_1(T) + C\int_0^T E^{1/4}\E^{5/4}(t)dt\nonumber \\ &+ CC_T\left[\int_0^T E^{1/2}(t)\E^{3/2}(t)dt + \int_0^T D_\Phi(t)dt \right],
	\end{align} where the constant $C$ here appears because we changed $\lesssim$ to $\leqslant.$ We then estimate the damping term as \begin{equation}
	\label{damp1} \int_0^T D_\Phi(s)ds \leqslant E_1(0)-E_1(T) + C\int_0^TE^{1/4}(t)\E^{5/4}(t)dt
	\end{equation} then, we a possible bigger $C$, we rewrite \eqref{enin5} as
	
	\begin{align}
	\label{enin6} (T-C+CC_T)E_1(T) + \int_0^T E_1(t)dt &\leqslant CC_TE_1(0) + CC_T\int_0^T E^{1/4}\E^{5/4}(t)dt\nonumber \\ &+ CC_T\int_0^T E^{1/2}(t)\E^{3/2}(t)dt.
	\end{align}This implies \begin{equation}
	\label{stab0} E_1(T) \leqslant \eta E_1(0) + \eta \int_0^T \Upsilon(\sigma)d\sigma,
	\end{equation}  where $$\eta := \dfrac{CC_T}{T-C+CC_T} < 1$$ for a fixed large $T$ and $\Upsilon$ collects the nonlinear terms, i.e., \begin{equation}
	\label{up0} \Upsilon(t) := E^{1/4}\E^{5/4}(t)+ E^{1/2}(t)\E^{3/2}(t).
	\end{equation} Now we notice that \eqref{stab0} was the result of the analysis made within the interval of fixed length $T>0$. We now iterate the same process for the interval (again of length $T$) $[(m-1)T,mT]$ for $m = 1,2,...$ and this gives \begin{equation}
	\label{stabm} E_1(mT) \leqslant \eta^m E_1(0) + \sum\limits_{k=1}^m \eta^k\int_{(m-k)T}^{(m-k+1)T}\Upsilon(\sigma)d\sigma
	\end{equation} Therefore, given any $t \in \BR_+$, by writing $t = mT + s$ for some $m \in \mathbb{N}$ and $s \in [0,T)$ and using \eqref{e1id2} we have \begin{align}
	\label{stabmt} E_1(t) & \leqslant  E_1(mT) + \int_{mT}^{t}\Upsilon(\sigma)d\sigma \nonumber \\ & \leqslant \eta^m E_1(0) + \sum\limits_{k=1}^m \eta^k\int_{(m-k)T}^{(m-k+1)T}\Upsilon(\sigma)d\sigma + \int_{mT}^{t}\Upsilon(\sigma)d\sigma \nonumber \\ &\leqslant M e^{-\omega t} E_1(0) + \sum\limits_{k=1}^m \eta^k\int_{(m-k)T}^{(m-k+1)T}\Upsilon(\sigma)d\sigma + \int_{mT}^{t}\Upsilon(\sigma)d\sigma
	\end{align} where $M = \eta^{-1}$ and $\omega := -T^{-1}\ln(\eta)$. \end{proof} \begin{remark}
	We notice that \eqref{stabmts} \emph{almost} yields exponential decay of $E_1$. The nonlinear remainder can then be taken care of by higher energy estimates.
\end{remark}

The final result of this section is the following. \begin{theorem}
	There exist $\overline \omega, \overline M> 0$ such that the inequality \begin{equation}
	\label{nonlinearstab} \E(t) \leqslant \overline Me^{-\overline\omega t} \E(0)
	\end{equation} for all $t \geqslant 0.$ 
\end{theorem}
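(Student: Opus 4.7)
The plan is to exploit the mild solution representation \eqref{formnon}, the two-level exponential stability from Theorem \ref{thm34n}, and the sub-linear bound \eqref{contcF} of Lemma \ref{fest}. The key idea, advertised in the introduction, is that the solutions produced by Theorem \ref{nsem} live in some $X^\beta$ where $\beta$ is controlled by $\rho$; consequently the nonlinear forcing $\cF(\Phi)$ is bounded in $\mathbb{H}_1$ by $(C\beta/\tau)\|\Phi\|_{\mathbb{H}_1}$, \emph{linearly} in the $\mathbb{H}_1$-norm but with a prefactor that can be made arbitrarily small by shrinking $\rho$. Hence $\cF$ acts as a genuine small perturbation of an exponentially stable linear semigroup, even for initial data of arbitrary size in $\mathbb{H}_1$.

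Concretely, I would first take the $\mathbb{H}_1$-norm in \eqref{formnon}, apply $\|T(t)\|_{\calL(\mathbb{H}_1)} \leq M_1 e^{-\omega_1 t}$ from Theorem \ref{thm34n}(ii), and insert \eqref{contcF} to produce the scalar integral inequality
\begin{equation*}
\|\Phi(t)\|_{\mathbb{H}_1} \leq M_1 e^{-\omega_1 t}\|\Phi_0\|_{\mathbb{H}_1} + \frac{M_1 C\beta}{\tau}\, e^{-\omega_1 t}\int_0^t e^{\omega_1 \sigma}\|\Phi(\sigma)\|_{\mathbb{H}_1}\,d\sigma.
\end{equation*}
Next I would apply a Gronwall--Beesack inequality with the choices $f(t) = M_1 e^{-\omega_1 t}\|\Phi_0\|_{\mathbb{H}_1}$, $g(t) = M_1 C\beta\tau^{-1} e^{-\omega_1 t}$ and $h(t) = e^{\omega_1 t}$. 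Since $g(s)h(s) \equiv M_1 C\beta/\tau$ and $f(\sigma)h(\sigma) \equiv M_1\|\Phi_0\|_{\mathbb{H}_1}$, both inner integrals are elementary and produce
\begin{equation*}
\|\Phi(t)\|_{\mathbb{H}_1} \leq M_1\|\Phi_0\|_{\mathbb{H}_1}\bigl(e^{-\omega_1 t} + e^{(M_1 C\beta/\tau - \omega_1)t}\bigr).
\end{equation*}
Finally, choosing $\rho$ (hence $\beta$) small enough that $M_1 C\beta/\tau \leq \omega_1/2$ yields the stated bound with $\omega = \omega_1/2$ and $M(\rho) = 2M_1$; as a bonus, this also gives Corollary \ref{corexp} since the effective rate $\omega_1 - M_1 C\beta/\tau$ tends to $\omega_1$ as $\rho \to 0^+$.

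The main obstacle is not the Gronwall step itself but ensuring, \emph{before} closing it, that the bound \eqref{contcF} is available uniformly for all $t \geq 0$. This requires that the invariance exploited in the contraction argument of Theorem \ref{nsem}, namely $\|\Phi(t)\|_{\mathbb{H}} < \beta$, hold with constants independent of the horizon; this is exactly what the exponential decays \eqref{exp} in \emph{both} $\mathbb{H}$ and $\mathbb{H}_1$ were designed to give, since the bounds in \eqref{invar}--\eqref{invar2} depend only on $M_0,M_1,\omega_0,\omega_1$ and not on $T$, so one may take $T=\infty$. In this sense the true technical work sits upstream, in the non-Lopatinski multiplier construction underpinning Theorem \ref{thm34n} and in the interpolation of Lemma \ref{fest} which decouples the $\mathbb{H}$-factor from the $\mathbb{H}_1$-factor; given these, the nonlinear stabilization step is soft.
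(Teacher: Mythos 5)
Your proposal is correct, and it is in substance the argument the paper itself adopts in Section \ref{refext2}: there the same decay is proved (in the equivalent form $\|\Phi(t)\|_{\mathbb{H}_1}\leqslant 2M_1e^{-\omega_1 t/2}\|\Phi_0\|_{\mathbb{H}_1}$, Theorem \ref{thmfinexp}) by taking the $\mathbb{H}_1$-norm of the fixed-point representation \eqref{varpar}, inserting \eqref{contcF} with $\beta<\tau\omega_1/(2M_1C)$, and closing with the Gronwall--Beesack inequality; your choices of $f,g,h$ and the resulting effective rate $\omega_1-M_1C\beta/\tau$ coincide with \eqref{last} and with Corollary \ref{corexp}. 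The only step you leave implicit is the passage from the $\mathbb{H}_1$-norm bound to the energy form $\E(t)\leqslant\overline{M}e^{-\overline{\omega}t}\E(0)$, which requires the equivalence $\E(t)\sim\|\Phi(t)\|_{\mathbb{H}_1}^2$ (so the energy decays at rate $\omega_1$, twice the norm rate); this is routine given the elliptic-regularity lemma of Section 4. It is worth noting, however, that the statement as quoted is drawn from an alternative ``hard version'' of the nonlinear stabilization present in the source but excluded from compilation, and the proof attached to \emph{that} statement is genuinely different from yours: it iterates the truncated observability estimate for $E_1$ over windows of length $T$ to obtain a contraction factor $\eta<1$ plus a cumulative nonlinear remainder, reconstructs $E_0$ and the $\|\Delta u\|_2^2$ component of $\E$ through the ODE $bu_t+c^2u=bz$, and finally absorbs the remainder using the smallness $E\lesssim\rho$ and the uniform bound $\E\leqslant R$ of the trajectories. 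That route stays entirely at the level of energy functionals and multipliers and does not invoke the semigroup representation of the nonlinear solution, but it pays for this with a delicate absorption of lower-order and remainder terms; your (and the paper's preferred) route is softer and yields an explicit decay rate converging to the linear rate $\omega_1$ as $\rho\to 0^+$, at the price of needing \eqref{contcF} uniformly in time, which you correctly trace back to the $T$-independence of the invariance estimates \eqref{invar}--\eqref{invar2}.
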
\begin{proof}
Denote by $\Upsilon_m$ the quantity \begin{equation}
\label{up5} \Upsilon_m := \sum\limits_{k=1}^m \eta^k\int_{(m-k)T}^{(m-k+1)T}\Upsilon(\sigma)d\sigma + \int_{mT}^{(m+1)T}\Upsilon(\sigma)d\sigma,
\end{equation} to ease readability. Then we start with the knowledge that \begin{equation}
\label{k1} E_1(t) \leqslant Me^{-\omega t}E_1(0) + \Upsilon_m
\end{equation} and recall that $m$ is connect to $t$ via $t = mT + s$ for some $s \in [0,T)$ with $T$ large enough so \eqref{stabmts} holds. Now recall that, via the relation $bz = bu_t + u$ we can write \begin{equation}\label{k2} u(t) = e^{-\frac{c^2}{b}t}u_0 + \int_0^te^{-\frac{c^2}{b}(t-\tau)}z(\tau)d\tau,\end{equation} and this allow both terms in $E_0$ (see \eqref{E0}) to be bounded above due to current \emph{almost} exponential bounds for $E_1.$ More precisely, there exist $C_T, M_1, \omega_1 >0$ such that \begin{equation}
\label{k3} E(t) \leqslant M_1e^{-\omega_1 t}E(0) + C_T\Upsilon_m.
\end{equation} In order to extend this to the higher energy $\E$ we again use the relation $bz = bu_t + c^2u$ to write \begin{equation}
\label{k4} \Delta u(t) = e^{-\frac{c^2}{b}t}\Delta u_0 + \int_0^te^{-\frac{c^2}{b}(t-\tau)}\Delta z(\tau)d\tau,
\end{equation} and then here we use the orignal PDE (in $z$) and perform computations like in \eqref{ode} \begin{align}\label{ode2}\Delta u(t) &= e^{-\frac{c^2}{b}t}\Delta u_0 +\int_0^t e^{-\frac{c^2}{b}(t-\sigma)} \Delta z(\sigma)d\sigma \nonumber \\ &= e^{-\frac{c^2}{b}t}\Delta u_0 +\dfrac{\tau}{b}\int_0^t e^{-\frac{c^2}{b}(t+\sigma)}\left[z_{tt}(\sigma)+\gamma u_{tt}(\sigma)\right]d\sigma \nonumber \\ &= e^{-\frac{c^2}{b}t}\Delta u_0 +\dfrac{\tau}{b} \left[z_t(t)+\gamma u_t(t) - e^{-\frac{c^2}{b}t}[z_t(0)+\gamma u_1] \right] \nonumber \\ &+ \dfrac{c^2}{b^2}\int_0^t e^{-\frac{c^2}{b}(t-\sigma)}[z_t(\sigma)+\gamma u_t(\sigma)]d\sigma \in L^2(\Omega). \end{align} from where it follows that there exist $C_T, M_2, \omega_2 > 0$ such that \begin{equation}\label{k5}\E(t) + \int_0^t\E(s)ds \leqslant M_2e^{-\omega_2 t}\E(0) + C_T \Upsilon_m.\end{equation} And as a last step we absorb $\Upsilon_m.$ 	To this end, recall that we started with $\mathbb{H}_1$--initial datum which is $\mathbb{H}$--small, we have $E(0) \leqslant \rho$ and $\rho$ can be chosen as small as needed. Moreover, we showed in the previous section that solutions would remain $\mathbb{H}$--small and $\mathbb{H}_1$--bounded for all times, that is, $E(t) \lesssim \rho$ while $\E(t) \leqslant R$ for all times. This means that we can estimate $\Upsilon$ as follows \begin{equation}
\label{up2}  \Upsilon(t) \leqslant \left[(\rho R)^{1/4} + (\rho R)^{1/2}\right]\E(t) := C_\rho \E(t)
\end{equation} therefore we estimate $\Upsilon_m$ as follows \begin{align}
\label{up6} \Upsilon_m &= \sum\limits_{k=1}^m \eta^k\int_{(m-k)T}^{(m-k+1)T}\Upsilon(\sigma)d\sigma + \int_{mT}^{(m+1)T}\Upsilon(\sigma)d\sigma \nonumber \\ & \leqslant \sum\limits_{k=1}^m \eta^k\int_{(m-k)T}^{(m-k+1)T} C_\rho \E(\sigma)d\sigma + \int_{mT}^{(m+1)T} C_\rho \E(\sigma)d\sigma \nonumber \\ & \leqslant C_\rho\left(\sum\limits_{k=0}^m \eta^k\right)  \int_0^t\E(\sigma)d\sigma \leqslant \dfrac{C_\rho}{1-\eta}\int_0^t \E(\sigma)d\sigma
\end{align}then we can improve \eqref{k5} to \begin{equation}\label{k6}\E(t) + \left(1-\dfrac{C_\rho}{1-\eta}\right)\int_0^t\E(s)ds \leqslant M_2e^{-\omega_2 t}\E(0),\end{equation} which finishes the proof by choosing $\rho > 0$ small.
\end{proof}
\fi

\bibliographystyle{abbrvurl} 
\bibliography{ref2.bib}
\end{document}